\theoremstyle{plain}
\newtheorem{theorem}{Theorem}[section]
\newtheorem{corollary}[theorem]{Corollary}
\newtheorem{lemma}[theorem]{Lemma}
\newtheorem{proposition}[theorem]{Proposition}
\theoremstyle{definition}
\newtheorem{definition}[theorem]{Definition}
\newtheorem{example}[theorem]{Example}
\newtheorem{remark}[theorem]{Remark}
\newtheorem*{openproblem}{Open Problem}
\numberwithin{equation}{section}
\newcommand{\R}{{\mathbb R}}
\newcommand{\N}{{\mathbb N}}
\newcommand{\Om}{\Omega}
\providecommand{\vint}[1]{\mathchoice
          {\mathop{\vrule width 5pt height 3 pt depth -2.5pt
                  \kern -9pt \kern 1pt\intop}\nolimits_{\kern -5pt{#1}}}
          {\mathop{\vrule width 5pt height 3 pt depth -2.6pt
                  \kern -6pt \intop}\nolimits_{\kern -3pt{#1}}}
          {\mathop{\vrule width 5pt height 3 pt depth -2.6pt
                  \kern -6pt \intop}\nolimits_{\kern -3pt{#1}}}
          {\mathop{\vrule width 5pt height 3 pt depth -2.6pt
                  \kern -6pt \intop}\nolimits_{\kern -3pt{#1}}}}
\newcommand{\eps}{\varepsilon}
\newcommand{\loc}{\mathrm{loc}}
\newcommand{\BV}{\mathrm{BV}}
\newcommand{\liploc}{\mathrm{Lip}_{\mathrm{loc}}}
\newcommand{\ch}{\text{\raise 1.3pt \hbox{$\chi$}\kern-0.2pt}}
\DeclareMathOperator{\Mod}{Mod}
\DeclareMathOperator{\capa}{Cap}
\DeclareMathOperator{\rcapa}{cap}
\DeclareMathOperator{\diam}{diam}
\DeclareMathOperator{\Lip}{Lip}
\DeclareMathOperator{\inte}{int}
\DeclareMathOperator{\fint}{fine-int}
\DeclareMathOperator{\0fint}{0-fine-int}
\begin{document}
\title{A Federer-style characterization of sets of \\
finite perimeter  on metric spaces
\footnote{{\bf 2010 Mathematics Subject Classification}: 30L99, 31E05, 26B30.
\hfill \break {\it Keywords\,}: metric measure space, bounded variation, finite perimeter, Federer's characterization, measure theoretic boundary, fine boundary.
}}
\author{Panu Lahti}
\maketitle

\begin{abstract}
In the setting of a metric space equipped with a doubling measure that supports a Poincar\'e inequality, we show that a set $E$ is of finite perimeter if and only if $\mathcal H(\partial^1 I_E)<\infty$, that is, if and only if the
codimension one Hausdorff measure of the \emph{$1$-fine boundary} of the set's measure theoretic interior $I_E$ is finite.
\end{abstract}

\section{Introduction}

Federer's structure theorem states that a set $E\subset\R^k$ is of finite perimeter if and only if $\mathcal{H}(\partial^*E)$
is finite, see \cite[Section 4.5.11]{Fed}. Here $\mathcal H$ is the codimension one (in this case, $k-1$-dimensional) Hausdorff measure, and $\partial^*E$ is the measure theoretic boundary of $E$.
In a complete metric space $X$
equipped with a doubling measure that supports a Poincar\'e inequality, the ``only if'' direction has been shown by Ambrosio,
see \cite{A1}, but
the ``if'' direction remains open.

In this paper we define for $A\subset X$ the \emph{1-fine boundary} $\partial^1 A$, which always contains $\partial^*A$ but can be strictly larger; for example on the real line, the $1$-fine boundary coincides with the topological boundary. However, using a fine continuity result for $\BV$ functions given in \cite{L}, we show that for any set of finite perimeter $E$, denoting the measure theoretic interior of $E$ by $I_E$, the difference $\partial^1 I_E\setminus \partial^*E$ is $\mathcal H$-negligible. In particular, then $\mathcal{H}(\partial^1 I_E)<\infty$. In showing this, we first prove a suitable characterization of the $1$-fine boundary, in analogy with what is known in the case $p>1$, see \cite[Section 7]{BB-OD}.

Then we show that the condition $\mathcal H(\partial^1 I_E)<\infty$ is also sufficient for $E$ to be of finite perimeter. For this, we generalize further concepts and results of fine potential theory from the case $p>1$ to the case $p=1$; all such considerations appear to be new even in the Euclidean setting. In particular, we study the existence of \emph{capacitary potentials} and prove weak analogs of the \emph{Cartan property} for solutions of obstacle problems, and of the \emph{Choquet property} for finely open sets. These have recently been studied for $p>1$ in the metric setting in \cite{BBL-CCK,BBL-WC}; see also \cite{MZ} and \cite{HKM} for the Euclidean theory and its history in the unweighted and weighted settings, respectively.

Our result  is the following --- see Section \ref{sec:prelis} for the definitions.

\begin{theorem}
For an open set $\Omega\subset X$ and a  $\mu$-measurable set $E\subset X$, we have
$P(E,\Omega)<\infty$ if and only if $\mathcal H(\partial^1 I_E\cap \Omega)<\infty$.
Furthermore, then
$\mathcal H((\partial^1 I_E\setminus \partial^*E)\cap\Omega)=0$.
\end{theorem}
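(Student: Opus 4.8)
We treat the two implications in turn; the ``furthermore'' assertion emerges from the first.

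\emph{Finite perimeter $\Rightarrow$ finite $\mathcal H$, and the ``furthermore'' part.} Suppose $P(E,\Om)<\infty$. By Ambrosio's theorem \cite{A1}, $\mathcal H(\partial^*E\cap\Om)$ is comparable to $P(E,\Om)$ and hence finite. The inclusion $\partial^*E\subset\partial^1 I_E$ is immediate from the definitions: a $1$-finely interior point of $I_E$ is a point of density one for $E$, and likewise for $X\setminus I_E$, so $\partial^1 I_E=\partial^*E\cup(\partial^1 I_E\setminus\partial^*E)$, and it suffices to show $\mathcal H\big((\partial^1 I_E\setminus\partial^*E)\cap\Om\big)=0$. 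To that end I would first prove the announced characterization of the $1$-fine boundary (the $p=1$ analog of \cite[Section 7]{BB-OD}); applied to $I_E$, it identifies $\partial^1 I_E$ with the set of points at which $\chi_{I_E}$ is not $1$-finely continuous. Now $\chi_{I_E}$ differs from the precise representative of $\chi_E$ only on $\partial^*E$, which in $\Om$ has finite $\mathcal H$-measure and is therefore $1$-thin at $\mathcal H$-a.e.\ point (a differentiation-type fact that must itself be established); hence $1$-fine continuity of the precise representative of $\chi_E$, which holds $\mathcal H$-a.e.\ in $\Om$ by the fine continuity theorem of \cite{L}, transfers to $\chi_{I_E}$ at $\mathcal H$-a.e.\ point of $\Om$. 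At every such point $x\notin\partial^1 I_E$, so $(\partial^1 I_E\setminus\partial^*E)\cap\Om$ is $\mathcal H$-null, giving both $\mathcal H(\partial^1 I_E\cap\Om)\le\mathcal H(\partial^*E\cap\Om)<\infty$ and the ``furthermore'' claim.

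\emph{Finite $\mathcal H$ $\Rightarrow$ finite perimeter.} This is the new direction, and it relies on the fine potential theory for $p=1$ developed in the body of the paper. Since $\chi_E=\chi_{I_E}$ $\mu$-a.e., $P(E,\cdot)=P(I_E,\cdot)$; as the perimeter is inner regular, it is enough to show that for every open $\Om'\Subset\Om$ and every $\eps>0$ we have $P(I_E,\Om')\le C\big(\mathcal H(\partial^1 I_E\cap\Om)+\eps\big)$. The complement $\Om'\setminus\partial^1 I_E$ splits as the disjoint union of the $1$-fine interiors $\fint I_E$ and $\fint(X\setminus I_E)$, on which $\chi_{I_E}$ equals $1$ and $0$ respectively. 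The plan is: (a) cover $\partial^1 I_E\cap\overline{\Om'}$ by an open set $G$ with $\rcapa_1(G,\Om)\le\mathcal H(\partial^1 I_E\cap\Om)+\eps$, using the boxing inequality and the comparability of $\rcapa_1$ with $\mathcal H$ on compact sets; (b) use the weak Choquet property for $1$-finely open sets (generalized here from the case $p>1$) to enlarge the two fine interiors to open sets $V$ and $V'$ differing from them only by open sets of arbitrarily small $1$-capacity, so that $W:=(G\cup V\cup V')\cap\Om'$ is open and of full $\mu$-measure in $\Om'$ and $\chi_{I_E}$ is locally constant on $W$ off the union $H$ of $G$ and the two small sets; (c) using the existence of capacitary potentials and the weak Cartan property, produce $\BV$ truncating functions supported near $H$ with small total variation, remove the contribution of $H$, and obtain $\BV$ functions on $\Om'$ with total variation at most $C\big(\mathcal H(\partial^1 I_E\cap\Om)+\eps\big)$ that converge to $\chi_{I_E}$ in $L^1_{\loc}(\Om')$. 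Lower semicontinuity of the total variation yields $P(I_E,\Om')\le C\big(\mathcal H(\partial^1 I_E\cap\Om)+\eps\big)$; letting $\eps\to0$ and $\Om'\uparrow\Om$ completes the proof.

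\emph{Main obstacle.} The only-if direction is essentially bookkeeping once the $1$-fine boundary is characterized and matched with the fine continuity theorem of \cite{L}; the substance is in the if direction, and there in the potential-theoretic steps (a)--(c). Capacitary potentials, the Cartan property and the Choquet property are classically built on \emph{nonlinear} potential theory --- $p$-superharmonic functions, the Wiener criterion with exponent $1/(p-1)$, uniquely solvable obstacle problems --- all of which break down at $p=1$. The work lies in replacing them by purely geometric substitutes (sets of least perimeter in the role of capacitary potentials, the coarea formula and the boxing inequality in place of the Wiener integral), in settling for \emph{weak} Cartan and Choquet properties, and in checking that these weak versions still drive the covering-and-truncation scheme with every constant controlled by $\mathcal H(\partial^1 I_E\cap\Om)$. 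Keeping the exceptional sets small in the \emph{correct} capacity, and ensuring that the truncations in step (c) do not create spurious boundary, is the crux of the whole argument.
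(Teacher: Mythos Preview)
Your sketch for the necessity direction is close in spirit, though the paper routes it slightly differently: rather than invoking fine continuity of a precise representative of $\ch_E$ directly, it uses the quasicontinuity theorem for sets of finite perimeter (from \cite{LaSh,L2}) to show that $I_E\cap\Omega$ and $O_E\cap\Omega$ are $1$-quasiopen, and then that every $1$-quasiopen set is $1$-finely open up to an $\mathcal H$-null set. Combined with the characterization $\partial^1 I_E=b_1 I_E\cap b_1(X\setminus I_E)$, this gives at once that $(\partial^1 I_E\setminus\partial^*E)\cap\Omega$ is $\mathcal H$-null.

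For sufficiency, you have identified the right ingredients (capacitary potentials, a weak Cartan property, a weak Choquet property), but the paper assembles them differently, and your step (c) as written is too vague to be a proof. In the paper, the entire fine-potential-theoretic apparatus is spent on a single fact: $I_E\setminus\partial^1 I_E$ and $O_E\setminus\partial^1 I_E$ are $1$-quasiopen. The crucial bridge you are missing is a modulus argument: since $1$-quasiopen sets are $1$-path open, $1$-a.e.\ curve joining $I_E$ to $O_E$ must meet $\partial^1 I_E$. With this in hand, the actual construction of approximants is elementary and uses neither capacitary potentials nor Cartan directly: one covers $\partial^1 I_E\cap\Omega$ by balls $B_j=B(x_j,r_j)$ with $\sum_j\mu(B_j)/r_j\le\mathcal H(\partial^1 I_E\cap\Omega)+\eps$, sets $u=1$ on $I_E$ and $u=\min\{1,\sum_j v_j\}$ off $I_E$ (with $v_j$ the standard $1/r_j$-Lipschitz cutoffs supported on $2B_j$), and verifies via the curve lemma that $g=\sum_j r_j^{-1}\ch_{2B_j}$ is a $1$-weak upper gradient of $u$ in $\Omega$. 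This gives $u\in N^{1,1}_{\loc}(\Omega)$ with $\int_\Omega g\,d\mu\le C_d(\mathcal H(\partial^1 I_E\cap\Omega)+\eps)$ and $u\to\ch_E$ in $L^1_{\loc}$ as $\eps\to 0$. Your truncation scheme (c) --- patching together $\BV$ pieces via capacitary potentials --- may be workable in principle, but the paper's path-modulus detour sidesteps entirely the bookkeeping of ``spurious boundary'' that you yourself flag as the crux.
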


Necessity is given by Theorem \ref{thm:finite perimeter implies finite boundary} in Section \ref{sec:necessity}. Sufficiency is given by Theorem \ref{thm:sufficiency} in Section \ref{sec:sufficiency}. The results of \cite{LaSh} and \cite{L} are used extensively in the proofs.

\paragraph{Acknowledgments.} The research was
funded by a grant from the Finnish Cultural Foundation.
The author wishes to thank Nageswari Shan\-mu\-ga\-lingam for reading the manuscript and
providing useful comments.

\section{Preliminaries}\label{sec:prelis}

In this section we introduce the notation, definitions, and assumptions used in the paper.

In this paper, $(X,d,\mu)$ is a complete metric space equipped
with a Borel regular outer measure $\mu$ satisfying a doubling property, that is,
there is a constant $C_d\ge 1$ such that
\[
0<\mu(B(x,2r))\leq C_d\,\mu(B(x,r))<\infty
\]
for every ball $B=B(x,r)$ with center $x\in X$ and radius $r>0$. We assume that $X$ consists of at least two points.
By iterating the doubling condition, we obtain that for any $x\in X$ and $y\in B(x,R)$ with $0<r\le R<\infty$, we have
\begin{equation}\label{eq:homogenous dimension}
\frac{\mu(B(y,r))}{\mu(B(x,R))}\ge \frac{1}{C_d^2}\left(\frac{r}{R}\right)^{Q},
\end{equation}
where $Q>1$ only depends on the doubling constant $C_d$.
When we want to specify that a constant $C$
depends on the parameters $a,b, \ldots,$ we write $C=C(a,b,\ldots)$.

A complete metric space with a doubling measure is proper,
that is, closed and bounded sets are compact. Since $X$ is proper, for any open set $\Omega\subset X$
we define $\liploc(\Omega)$ to be the space of
functions that are Lipschitz in every open $\Omega'\Subset\Omega$.
Here $\Omega'\Subset\Omega$ means that $\overline{\Omega'}$ is a
compact subset of $\Omega$. Other local spaces of functions are defined similarly.

For any set $A\subset X$ and $0<R<\infty$, the restricted spherical Hausdorff content
of codimension one is defined by
\[
\mathcal{H}_{R}(A):=\inf\left\{ \sum_{i=1}^{\infty}
  \frac{\mu(B(x_{i},r_{i}))}{r_{i}}:\,A\subset\bigcup_{i=1}^{\infty}B(x_{i},r_{i}),\,r_{i}\le R\right\}.
\]
The codimension one Hausdorff measure of $A\subset X$ is given by
\begin{equation*}
  \mathcal{H}(A):=\lim_{R\rightarrow 0}\mathcal{H}_{R}(A).
\end{equation*}

The measure theoretic boundary $\partial^{*}E$ of a set $E\subset X$ is the set of points $x\in X$
at which both $E$ and its complement have positive upper density, i.e.
\[
\limsup_{r\to 0}\frac{\mu(B(x,r)\cap E)}{\mu(B(x,r))}>0\quad\;
  \textrm{and}\quad\;\limsup_{r\to 0}\frac{\mu(B(x,r)\setminus E)}{\mu(B(x,r))}>0.
\]
The measure theoretic interior and exterior of $E$ are defined respectively by
\begin{equation}\label{eq:definition of measure theoretic interior}
I_E:=\left\{x\in X:\,\lim_{r\to 0}\frac{\mu(B(x,r)\setminus E)}{\mu(B(x,r))}=0\right\}
\end{equation}
and
\begin{equation}\label{eq:definition of measure theoretic exterior}
O_E:=\left\{x\in X:\,\lim_{r\to 0}\frac{\mu(B(x,r)\cap E)}{\mu(B(x,r))}=0\right\}.
\end{equation}
Note that we always have a partitioning of the space into the disjoint sets
$\partial^*E$, $I_E$, and $O_E$. Moreover, $I_{X\setminus E}=O_E$.
By the Lebesgue differentiation theorem, for a \emph{$\mu$-measurable} set $E$ we have
$\mu(E\Delta I_E)=0$ and $\mu((X\setminus E)\Delta O_E)=0$, where $\Delta$ is the symmetric difference, and so $I_E=I_{I_E}$ and $\partial^*E=\partial^* I_E$.

A curve is a rectifiable continuous mapping from a compact interval
into $X$.
The length of a curve $\gamma$
is denoted by $\ell_{\gamma}$. We will assume every curve to be parametrized
by arc-length, which can always be done (see e.g. \cite[Theorem~3.2]{Hj}).
A nonnegative Borel function $g$ on $X$ is an upper gradient 
of an extended real-valued function $u$
on $X$ if for all curves $\gamma$, we have
\begin{equation}\label{eq:definition of upper gradient}
|u(x)-u(y)|\le \int_\gamma g\,ds,
\end{equation}
where $x$ and $y$ are the end points of $\gamma$. We interpret $|u(x)-u(y)|=\infty$ whenever  
at least one of $|u(x)|$, $|u(y)|$ is infinite.
Of course, by replacing $X$ with a set $A\subset X$ and considering curves $\gamma$ in $A$, we can talk about a function $g$ being an upper gradient of $u$ in $A$.
Upper gradients were originally introduced in~\cite{HK}.

If $g$ is a nonnegative $\mu$-measurable function on $X$
and (\ref{eq:definition of upper gradient}) holds for $1$-almost every curve,
we say that $g$ is a $1$-weak upper gradient of~$u$. 
A property holds for $1$-almost every curve
if it fails only for a curve family with zero $1$-modulus. 
A family $\Gamma$ of curves is of zero $1$-modulus if there is a 
nonnegative Borel function $\rho\in L^1(X)$ such that 
for all curves $\gamma\in\Gamma$, the curve integral $\int_\gamma \rho\,ds$ is infinite.

Given an open set $\Omega\subset X$, we consider the following norm
\[
\Vert u\Vert_{N^{1,1}(\Omega)}:=\Vert u\Vert_{L^1(\Omega)}+\inf \Vert g\Vert_{L^1(\Omega)},
\]
where the infimum is taken over all $1$-weak upper gradients $g$ of $u$ in $\Omega$.
The substitute for the Sobolev space $W^{1,1}(\Omega)$ in the metric setting is the Newton-Sobolev space
\[
N^{1,1}(\Omega):=\{u:\|u\|_{N^{1,1}(\Omega)}<\infty\}.
\]
We understand Newton-Sobolev functions to be defined everywhere (even though $\Vert \cdot\Vert_{N^{1,1}(\Omega)}$ is, precisely speaking, then only a seminorm).
For more on Newton-Sobolev spaces, we refer to~\cite{S, BB, HKST}.

Next we recall the definition and basic properties of functions
of bounded variation on metric spaces, following \cite{M}. See also e.g. \cite{AFP, EvaG92, Giu84, Zie89} for the classical 
theory in the Euclidean setting.
For $u\in L^1_{\loc}(X)$, we define the \emph{total variation} of $u$ in $X$ by
\[
\|Du\|(X):=\inf\left\{\liminf_{i\to\infty}\int_X g_{u_i}\,d\mu:\, u_i\in \Lip_{\loc}(X),\, u_i\to u\textrm{ in } L^1_{\loc}(X)\right\},
\]
where each $g_{u_i}$ is an upper gradient of $u_i$.
We say that a function $u\in L^1(X)$ is \emph{of bounded variation}, 
and denote $u\in\BV(X)$, if $\|Du\|(X)<\infty$.
By replacing $X$ with an open set $\Omega\subset X$ in the definition of the total variation, we can define $\|Du\|(\Omega)$.
For an arbitrary set $A\subset X$, we define
\[
\|Du\|(A)=\inf\{\|Du\|(\Omega):\, A\subset\Omega,\,\Omega\subset X
\text{ is open}\}.
\]
If $\Vert Du\Vert(X)<\infty$, $\|Du\|(\cdot)$ is a finite Radon measure on $X$ by \cite[Theorem 3.4]{M}.
A $\mu$-measurable set $E\subset X$ is said to be of \emph{finite perimeter} if $\|D\ch_E\|(X)<\infty$, where $\ch_E$ is the characteristic function of $E$.
The perimeter of $E$ in $\Omega$ is also denoted by
\[
P(E,\Omega):=\|D\ch_E\|(\Omega).
\]
Similarly as above, if $P(E,\Omega)<\infty$, then $P(E,\cdot)$ is finite Radon measure
on $\Omega$.

For any Borel sets $E_1,E_2\subset X$, we have by \cite[Proposition 4.7]{M}
\[
P(E_1\cup E_2,X)\le P(E_1,X)+P(E_2,X). 
\]
The proof works equally well for $\mu$-measurable $E_1,E_2\subset X$ and with $X$
replaced by any open set $\Omega$. Then by approximation from the outside by open sets, we obtain for any $A\subset X$
\begin{equation}\label{eq:sets of finite perimeter form an algebra}
P(E_1\cup E_2,A)\le P(E_1,A)+P(E_2,A). 
\end{equation}

We have the following coarea formula from~\cite[Proposition 4.2]{M}: if $U\subset X$ is an open set and $u\in L^1_{\loc}(U)$, then
\begin{equation}\label{eq:coarea}
\|Du\|(U)=\int_{-\infty}^{\infty}P(\{u>t\},U)\,dt.
\end{equation}

We will assume throughout that $X$ supports a $(1,1)$-Poincar\'e inequality,
meaning that there exist constants $C_P\ge 1$ and $\lambda \ge 1$ such that for every
ball $B(x,r)$, every locally integrable function $u$ on $X$,
and every upper gradient $g$ of $u$,
we have 
\[
\vint{B(x,r)}|u-u_{B(x,r)}|\, d\mu 
\le C_P r\vint{B(x,\lambda r)}g\,d\mu,
\]
where 
\[
u_{B(x,r)}:=\vint{B(x,r)}u\,d\mu :=\frac 1{\mu(B(x,r))}\int_{B(x,r)}u\,d\mu.
\]
The $(1,1)$-Poincar\'e inequality implies the so-called Sobolev-Poincar\'e inequality, see e.g. \cite[Theorem 4.21]{BB}, and by applying the latter to approximating locally Lipschitz functions in the definition of the total variation, we get the following Sobolev-Poincar\'e inequality for $\BV$ functions. For every ball $B(x,r)$ and every $u\in L^1_{\loc}(X)$, we have
\[
\left(\,\vint{B(x,r)}|u-u_{B(x,r)}|^{Q/(Q-1)}\,d\mu\right)^{(Q-1)/Q}
\le C_{SP}r\frac{\Vert Du\Vert (B(x,2\lambda r))}{\mu(B(x,2\lambda r))},
\]
where $Q$ is the exponent from \eqref{eq:homogenous dimension} and
 $C_{SP}=C_{SP}(C_d,C_P,\lambda)\ge 1$ is a constant.
For a $\mu$-measurable set $E\subset X$, this implies (see e.g. \cite[Equation (3.1)]{KoLa})
\[
\frac 12 \left(\frac{\min\{\mu(B(x,r)\cap E),\mu(B(x,r)\setminus E)\}}{\mu(B(x,r))}\right)^{(Q-1)/Q}
\le C_{SP}r\frac{P(E,B(x,2\lambda r))}{\mu(B(x,2\lambda r))}.
\]
Rearranged, this implies
\begin{equation}\label{eq:relative isoperimetric inequality}
\begin{split}
&\min\{\mu(B(x,r)\cap E),\mu(B(x,r)\setminus E)\}\\
&\quad \le 2 C_{SP}r\left(\frac{\min\{\mu(B(x,r)\cap E),\mu(B(x,r)\setminus E)\}}{\mu(B(x,r))}\right)^{1/Q}P(E,B(x,2\lambda r)).
\end{split}
\end{equation}
Moreover, the $(1,1)$-Poincar\'e inequality implies the following Sobolev inequality.
If $x\in X$, $0<r<\frac{1}{4}\diam(X)$, and $u\in N^{1,1}(X)$ with $u=0$ in $X\setminus B(x,r)$, then
\begin{equation}\label{eq:sobolev inequality}
\int_{B(x,r)} |u|\,d\mu\le C_S r \int_{B(x,r)}  g_u\,d\mu
\end{equation}
for any upper gradient $g_u$ of $u$ and a constant $C_S=C_S(C_d,C_P)\ge 1$, see \cite[Theorem 5.51]{BB}.
By approximation, we obtain that for any $x\in X$, any $0<r<\frac{1}{4}\diam(X)$, and any $\mu$-measurable set $E\subset B(x,r)$, we have
\begin{equation}\label{eq:isop inequality with zero boundary values}
\mu(E)\le C_S r P(E,X).
\end{equation}

The $1$-capacity of a set $A\subset X$ is given by
\begin{equation}\label{eq:definition of p-capacity}
 \capa_1(A):=\inf \Vert u\Vert_{N^{1,1}(X)},
\end{equation}
where the infimum is taken over all functions $u\in N^{1,1}(X)$ such that $u\ge 1$ in $A$.
We know that $\capa_1$ is an outer capacity, meaning that
\[
\capa_1(A)=\inf\{\capa_1(U):\,U\supset A\textrm{ is open}\}
\]
for any $A\subset X$, see e.g. \cite[Theorem 5.31]{BB}. If a property holds outside a set
$A\subset X$ with $\capa_1(A)=0$, we say that it holds $1$-quasieverywhere.
If $u\in N^{1,1}(X)$, then $\Vert u-v\Vert_{N^{1,1}(X)}=0$ if and only if $u=v$ $1$-quasieverywhere, see \cite[Proposition 1.61]{BB}.

The variational $1$-capacity of a set $A\subset D$ with respect to a set $D\subset X$ is given by
\[
\rcapa_1(A,D):=\inf \int_{X} g_u\,d\mu,
\]
where the infimum is taken over functions $u\in N^{1,1}(X)$ and upper gradients $g_u$ of $u$ such that $u\ge 1$ in $A$ (equivalently,  $1$-quasieverywhere in $A$) and $u=0$ in $X\setminus D$.
We know that $\rcapa_1$ is also an outer capacity, in the sense that if $\Omega\subset X$ is a bounded open set and $A\Subset \Omega$, then
\[
\rcapa_1(A,\Omega)=\inf\{\rcapa_1(U):\,U\textrm{ open},\,A\subset U\subset\Omega \},
\]
see \cite[Theorem 6.19]{BB}.
For basic properties satisfied by capacities, such as monotonicity and countable subadditivity, see e.g. \cite{BB}.

Given a set $E\subset X$ of finite perimeter, for $\mathcal H$-almost every $x\in \partial^*E$ we have
\begin{equation}\label{eq:definition of gamma}
\gamma \le \liminf_{r\to 0} \frac{\mu(B(x,r)\cap E)}{\mu(B(x,r))} \le \limsup_{r\to 0} \frac{\mu(B(x,r)\setminus E)}{\mu(B(x,r))}\le 1-\gamma,
\end{equation}
where $\gamma \in (0,1/2]$ only depends on the doubling constant and the constants in the Poincar\'e inequality, 
see~\cite[Theorem 5.4]{A1}.
For an open set $\Omega\subset X$ and a $\mu$-measurable set $E\subset X$ with $P(E,\Omega)<\infty$, we know that for any Borel set $A\subset\Omega$,
\begin{equation}\label{eq:def of theta}
P(E,A)=\int_{\partial^{*}E\cap A}\theta_E\,d\mathcal H,
\end{equation}
where
$\theta_E\colon X\to [\alpha,C_d]$ with $\alpha=\alpha(C_d,C_P,\lambda)>0$, see \cite[Theorem 5.3]{A1} 
and \cite[Theorem 4.6]{AMP}.

The lower and upper approximate limits of an extended real-valued function $u$ on $X$ are defined respectively by
\begin{equation}\label{eq:lower approximate limit}
u^{\wedge}(x):
=\sup\left\{t\in\overline\R:\,\lim_{r\to 0}\frac{\mu(B(x,r)\cap\{u<t\})}{\mu(B(x,r))}=0\right\}
\end{equation}
and
\begin{equation}\label{eq:upper approximate limit}
u^{\vee}(x):
=\inf\left\{t\in\overline\R:\,\lim_{r\to 0}\frac{\mu(B(x,r)\cap\{u>t\})}{\mu(B(x,r))}=0\right\}.
\end{equation}

Note that for $u=\ch_E$ with $E\subset X$, we have $x\in I_E$ if and only if $u^{\wedge}(x)=u^{\vee}(x)=1$, $x\in O_E$ if and only if $u^{\wedge}(x)=u^{\vee}(x)=0$, and $x\in \partial^*E$ if and only if $u^{\wedge}(x)=0$ and $u^{\vee}(x)=1$.

We understand $\BV$ functions to be $\mu$-equivalence classes. To consider fine properties, we need to consider the pointwise representatives $u^{\wedge}$ and $u^{\vee}$.

We need to generalize concepts of fine potential theory from the case $p>1$ to the case $p=1$.
The following definition can be taken directly from e.g. \cite{BBL-WC}.
\begin{definition}
A set $A\subset X$ is \emph{$1$-quasiopen} if for every $\eps>0$ there is an
open set $G\subset X$ with $\capa_1(G)<\eps$ such that $A\cup G$ is open.
\end{definition}

Next we define the fine topology in the case $p=1$.
\begin{definition}\label{def:1 fine topology}
We say that $A\subset X$ is $1$-thin at the point $x\in X$ if
\[
\lim_{r\to 0}r\frac{\rcapa_1(B(x,r)\cap A,B(x,2r))}{\mu(B(x,r))}=0.
\]
We also say that a set $U\subset X$ is $1$-finely open if $X\setminus U$ is $1$-thin at every $x\in U$. Then we define the $1$-fine topology as the collection of $1$-finely open sets on $X$.

We denote the $1$-fine interior of a set $G\subset X$, i.e. the largest $1$-finely open set contained in $G$, by $\fint G$. We denote the $1$-fine closure of a set $G\subset X$, i.e. the smallest $1$-finely closed set containing $G$, by $\overline{G}^1$. We define the $1$-fine boundary of a set $G\subset X$ by
$\partial^1 G:=\overline{G}^1\setminus \fint G$.

Finally, we define the \emph{$1$-base} $b_1 G$ of a set $G\subset X$ as the set of points where $G$ is \emph{not} $1$-thin.
\end{definition}

Note that always $b_1 G\subset \overline{G}^1$. See \cite[Section 4]{L} for motivation of the definition of $1$-thinness, and for a proof of the fact that the $1$-fine topology is indeed a topology.

\section{The $1$-fine boundary}\label{sec:fine boundary}

In this section we give a suitable characterization of the $1$-fine boundary.

\begin{lemma}\label{lem:inclusion for fine boundaries}
For any $A\subset X$, we have $I_A\cup \partial^*A\subset b_1 A$ and
$\partial^*A\subset \partial^{1} A$.
\end{lemma}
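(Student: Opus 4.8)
The plan is to establish the two inclusions separately, with the bulk of the work going into showing $I_A \cup \partial^* A \subset b_1 A$; the second inclusion $\partial^* A \subset \partial^1 A$ will then follow quickly by a symmetry argument plus the observation that $b_1 A \subset \overline{A}^1$.

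\emph{First inclusion.} Fix $x \in I_A \cup \partial^* A$. I want to show $A$ is not $1$-thin at $x$, i.e. that
\[
\limsup_{r\to 0} r\,\frac{\rcapa_1(B(x,r)\cap A, B(x,2r))}{\mu(B(x,r))} > 0.
\]
The key is to bound the variational capacity $\rcapa_1(B(x,r)\cap A, B(x,2r))$ from below in terms of $\mu(B(x,r)\cap A)/r$. To do this, I would take any admissible function $u \in N^{1,1}(X)$ with $u \ge 1$ on $B(x,r)\cap A$ and $u = 0$ outside $B(x,2r)$, with upper gradient $g_u$, and apply the Sobolev inequality \eqref{eq:sobolev inequality} on the ball $B(x,2r)$ (valid for small $r$, once $r < \tfrac18 \diam(X)$ say, which is fine since $X$ has at least two points): this gives
\[
\mu(B(x,r)\cap A) \le \int_{B(x,2r)} u\,d\mu \le 2 C_S r \int_{B(x,2r)} g_u\,d\mu,
\]
using $u \ge 1$ on $B(x,r)\cap A$ and $u \ge 0$ everywhere. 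Taking the infimum over admissible $u$ and $g_u$ yields
\[
\rcapa_1(B(x,r)\cap A, B(x,2r)) \ge \frac{\mu(B(x,r)\cap A)}{2 C_S r},
\]
so that
\[
r\,\frac{\rcapa_1(B(x,r)\cap A, B(x,2r))}{\mu(B(x,r))} \ge \frac{1}{2C_S}\cdot\frac{\mu(B(x,r)\cap A)}{\mu(B(x,r))}.
\]
Now if $x \in I_A$ the right-hand side tends to $1/(2C_S) > 0$; if $x \in \partial^* A$ it has positive limsup by definition of the measure theoretic boundary. Either way $A$ is not $1$-thin at $x$, i.e. $x \in b_1 A$.

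\emph{Second inclusion.} Suppose $x \in \partial^* A$. Since $\partial^* A \subset b_1 A \subset \overline{A}^1$, it remains only to check $x \notin \fint A$; in fact I will show $x \notin \fint A$ because $x$ is not even in the $1$-fine interior — equivalently, $X \setminus A$ is not $1$-thin at $x$. But $\partial^* A = \partial^*(X\setminus A)$, and by the first inclusion applied to the set $X \setminus A$ we get $\partial^*(X\setminus A) \subset b_1(X\setminus A)$, so $X \setminus A$ is not $1$-thin at $x$. Hence $x$ cannot lie in any $1$-finely open subset of $A$, so $x \notin \fint A$. Combined with $x \in \overline{A}^1$ this gives $x \in \overline{A}^1 \setminus \fint A = \partial^1 A$.

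\emph{Main obstacle.} The one point requiring care is the use of the Sobolev inequality \eqref{eq:sobolev inequality}: it is stated for $u \in N^{1,1}(X)$ vanishing outside a ball $B(x,r)$ with $r < \tfrac14\diam(X)$, whereas here the admissible functions vanish outside $B(x,2r)$. So I should apply it on the larger ball, i.e. replace $r$ by $2r$ and require $2r < \tfrac14 \diam(X)$, which only restricts us to small $r$ and is harmless since $1$-thinness is a condition as $r \to 0$. (If $X$ is bounded one simply works with $r$ below a fixed threshold throughout.) Apart from this bookkeeping, the argument is essentially the standard comparison between variational capacity and measure via a Sobolev inequality, transported to the $p=1$, $\BV$-capacity setting.
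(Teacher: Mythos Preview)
Your proof is correct and follows essentially the same route as the paper: the core of the first inclusion is the Sobolev-inequality comparison $\mu(B(x,r)\cap A)\le 2C_S r\,\rcapa_1(B(x,r)\cap A,B(x,2r))$, which you derive directly while the paper phrases it as a contrapositive; the second inclusion is obtained in both cases by applying the first to $A$ and to $X\setminus A$ and using $b_1 A\subset \overline{A}^1$. The only cosmetic point is that admissible test functions for $\rcapa_1$ are not assumed nonnegative in the paper's definition, but truncating at $0$ (which does not increase the upper-gradient integral) immediately justifies your ``$u\ge 0$'' step.
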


\begin{proof}
Suppose $x\in X\setminus b_1 A$, so that
\[
\lim_{r\to 0} r\frac{\rcapa_1(B(x,r)\cap A, B(x,2r))}{\mu(B(x,r))}=0.
\]
By the definition of the variational capacity, for every $r>0$ we find a nonnegative function $u_r\in N^{1,1}(X)$ such that $u_r\ge 1$ in $B(x,r)\cap A$, $u_r=0$ in $X\setminus B(x,2r)$, and
$u_r$ has an upper gradient $g_r$ with
\[
\frac{r}{\mu(B(x,r))}\int_X g_r\,d\mu\to 0\qquad\textrm{as }r\to 0.
\]
But by the Sobolev inequality \eqref{eq:sobolev inequality},
\begin{align*}
\limsup_{r\to 0}\frac{\mu(B(x,r)\cap A)}{\mu(B(x,r))}
&\le \limsup_{r\to 0}C_d\vint{B(x,2r)}u_r\,d\mu\\
&\le \limsup_{r\to 0}2C_S C_d r\vint{B(x,2 r)}g_r\,d\mu=0.
\end{align*}
Thus $x\notin I_A\cup \partial^*A$.

To prove the second claim, note that by the first claim,
\[
\overline{A}^1\supset b_1 A \supset I_A\cup \partial^*A
\]
and
\begin{align*}
\fint A
&=X\setminus \overline{X\setminus A}^1\subset X\setminus b_1(X\setminus A)\\
&\subset X\setminus (I_{X\setminus A}\cup \partial^*A)
 = X\setminus (O_{A}\cup \partial^*A)=I_A.
\end{align*}
By combining these, we obtain
$\partial^1 A= \overline{A}^1\setminus \fint A\supset \partial^*A$.
\end{proof}

Next we gather some known results.

\begin{lemma}[{\cite[Lemma 4.3]{L}}]\label{lem:local boxing inequality}
Let $x\in X$, let $r>0$, and let $G\subset X$ be a $\mu$-measurable set with
\begin{equation}\label{eq:little G}
\frac{\mu(B(x,2r)\cap G)}{\mu(B(x,2r))}\le \frac{1}{2C_d^{\lceil\log_2 ( 128\lambda)\rceil}}.
\end{equation}
Then for some constant $C_1=C_1(C_d,C_P,\lambda)$, 
\[
\rcapa_1 (B(x,r)\cap I_G,B(x,2r))\le C_1 P(G,B(x,2 r)).
\]
\end{lemma}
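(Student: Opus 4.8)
\textbf{Proof plan for Lemma \ref{lem:local boxing inequality}.}

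The statement to be proved is a local boxing-type inequality: under the smallness hypothesis \eqref{eq:little G} on the density of $G$ in $B(x,2r)$, the variational $1$-capacity of $B(x,r)\cap I_G$ inside $B(x,2r)$ is controlled by the perimeter $P(G,B(x,2r))$. The plan is to build an explicit admissible test function for the capacity by modifying $\ch_{I_G}$ (or $\ch_G$) so that it vanishes outside $B(x,2r)$, and then to estimate its upper gradient using the coarea formula \eqref{eq:coarea} together with the relative isoperimetric inequality \eqref{eq:relative isoperimetric inequality}.

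First I would fix a cutoff: let $\eta$ be a Lipschitz function with $\eta=1$ on $B(x,r)$, $\eta=0$ outside $B(x,2r)$, and $|\nabla\eta|\le 2/r$ (such a function exists in this metric setting). The natural candidate test function is $u:=\eta\,\ch_G$, which equals $1$ on $B(x,r)\cap G$ and vanishes outside $B(x,2r)$; since $\capa_1$ and $\rcapa_1$ are insensitive to sets of capacity zero and $\ch_G=\ch_{I_G}$ up to a $\mu$-null set, making this rigorous for $I_G$ rather than $G$ will require a short argument using that $\rcapa_1$ is an outer capacity and that one can pass to the precise representative; alternatively one invokes directly the known identity between the capacity of $I_G$ and a suitable relative perimeter. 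The upper gradient of $u=\eta\ch_G$ can be taken, via the Leibniz-type rule for $\BV$/Newtonian products, to be comparable to $\eta\,dP(G,\cdot) + (2/r)\,\ch_{B(x,2r)\setminus B(x,r)}\,d\mu$ restricted appropriately; integrating gives a bound of the form $C\big(P(G,B(x,2r)) + \tfrac1r \mu(B(x,2r)\setminus B(x,r))\big)$. The term $\tfrac1r\mu(B(x,2r))$ is \emph{not} directly controlled by $P(G,B(x,2r))$ in general, and this is where the density hypothesis \eqref{eq:little G} must enter.

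The key step — and the main obstacle — is to absorb the ``bad'' volume term $\tfrac1r\mu(B(x,2r)\cap G)$ using the relative isoperimetric inequality. Because \eqref{eq:little G} forces $\mu(B(x,2r)\cap G)\le\tfrac12\mu(B(x,2r))$, we have $\min\{\mu(B(x,2r)\cap G),\mu(B(x,2r)\setminus G)\}=\mu(B(x,2r)\cap G)$, and \eqref{eq:relative isoperimetric inequality} (applied on $B(x,2r)$, or on a slightly enlarged ball to accommodate the dilation constant $\lambda$; this is why the smallness threshold in \eqref{eq:little G} carries the factor $C_d^{\lceil\log_2(128\lambda)\rceil}$, so that the density bound persists on $B(x,2\lambda\cdot 2r)$ after controlled dilation) yields
\[
\mu(B(x,2r)\cap G)\le 2C_{SP}(2r)\Big(\tfrac{\mu(B(x,2r)\cap G)}{\mu(B(x,2r))}\Big)^{1/Q}P(G,B(x,4\lambda r)),
\]
so that $\tfrac1r\mu(B(x,2r)\cap G)\lesssim P(G,B(x,4\lambda r))$, with the density factor $\le 1$ simply dropped. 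One must be slightly careful that the perimeter appears at scale $B(x,4\lambda r)$ rather than $B(x,2r)$; this is handled by covering the annular region by boundedly many balls of radius comparable to $r$ on each of which the density hypothesis (with its built-in room) still applies, or by absorbing the larger ball into the statement's $B(x,2r)$ via the convention that $P(G,B(x,2r))$ already denotes the perimeter in the open ball of radius $2r$ and choosing the cutoff scale accordingly. The portion of $g_u$ supported where $u<1$ but $\ch_G=0$ contributes nothing. Combining the perimeter term from the Leibniz rule with this absorbed volume term produces the bound $\rcapa_1(B(x,r)\cap I_G,B(x,2r))\le C_1 P(G,B(x,2r))$ with $C_1$ depending only on $C_d$, $C_P$, $\lambda$, as claimed.

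Finally, I would tie up the reduction from $\ch_G$ to $\ch_{I_G}$: since $u=\eta\ch_G$ is admissible for testing $\rcapa_1(B(x,r)\cap I_G,B(x,2r))$ only after noting $\ch_G\ge 1$ on $B(x,r)\cap I_G$ up to a $\mu$-null set — which is a capacity-null set statement requiring that $\mu$-null sets here have zero $1$-capacity, a standard consequence of \eqref{eq:sobolev inequality} — one instead chooses, for each $\eps>0$, an admissible function strictly exceeding $1$ on $B(x,r)\cap I_G$ and close in norm to $u$, using outer regularity of $\rcapa_1$; letting $\eps\to 0$ gives the inequality. I expect the density-hypothesis-to-isoperimetric absorption to be the only genuinely delicate point; the product rule for upper gradients and the cutoff construction are routine.
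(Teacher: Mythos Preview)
This lemma is not proved in the present paper; it is quoted from \cite[Lemma 4.3]{L}. The label ``local boxing inequality'' refers to a specific covering technique, and your single-cutoff approach is not that technique and contains a genuine gap.

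The gap is the scale mismatch you yourself flag but do not resolve. After the Leibniz rule, the volume term to be absorbed is $\tfrac{1}{r}\mu(B(x,2r)\cap G)$; the relative isoperimetric inequality \eqref{eq:relative isoperimetric inequality} applied on the ball $B(x,2r)$ controls this only by $P(G,B(x,4\lambda r))$, not by the required $P(G,B(x,2r))$. Neither of your suggested fixes works: covering the annulus $B(x,2r)\setminus B(x,r)$ by boundedly many balls of radius comparable to $r$ still produces, via isoperimetry, perimeter contributions on $2\lambda$-dilates that protrude past $B(x,2r)$; and there is no convention that lets one read $P(G,B(x,2r))$ as $P(G,B(x,4\lambda r))$. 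A secondary issue is that $u=\eta\,\ch_G$ is merely in $\BV$, not in $N^{1,1}$, and so is not admissible for $\rcapa_1$ as defined in \eqref{eq:definition of p-capacity}; your final paragraph does not repair this, since $\mu$-null sets need not have zero $1$-capacity.

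The boxing argument in \cite{L} uses the density hypothesis differently. For each $y\in B(x,r)\cap I_G$ the density $s\mapsto \mu(B(y,s)\cap G)/\mu(B(y,s))$ tends to $1$ as $s\to 0$, while \eqref{eq:little G} combined with doubling forces this density below $1/2$ once $s$ is of order $r/\lambda$ --- the exponent $\lceil\log_2(128\lambda)\rceil$ is calibrated precisely for this. One then picks, for each such $y$, an intermediate radius $s_y$ small enough that $B(y,10\lambda s_y)\subset B(x,2r)$ and at which the density is of a fixed order; the relative isoperimetric inequality on $B(y,s_y)$ now gives $\mu(B(y,s_y))/s_y\lesssim P(G,B(y,2\lambda s_y))$ with the right-hand ball safely inside $B(x,2r)$. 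A $5r$-covering lemma extracts a disjoint subfamily whose dilates cover $B(x,r)\cap I_G$, and the sum of Lipschitz cutoffs on these small balls is a genuine $N^{1,1}$ test function whose upper-gradient integral is bounded by $C_1 P(G,B(x,2r))$. The point is that the isoperimetric inequality is applied on many \emph{small} balls chosen so that their $2\lambda$-dilates remain in $B(x,2r)$, not once on the big ball.
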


Moreover, it is straightforward to show that for any set $A\subset X$ and any ball $B(x,r)$,
\begin{equation}\label{eq:capacity and Hausdorff measure}
\rcapa_1(B(x,r)\cap A,B(x,2r))\le C_d \mathcal H(B(x,r)\cap´A).
\end{equation}
This can be deduced by using suitable cutoff functions.

In the particular case of a set of finite perimeter, the $1$-fine closure is essentially just the measure theoretic closure; this is essentially contained in
\cite[Proposition 4.4]{L}, but we repeat the proof here.

\begin{lemma}\label{lem:fine closure of a set of finite perimeter}
Let $E\subset X$ be a set of finite perimeter. Then there exists a
$\mathcal H$-negligible set $N\subset X$ such that
\[
\overline{I_E}^1\subset I_E\cup \partial^*E \cup N.
\]
\end{lemma}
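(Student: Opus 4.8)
The plan is to show that outside an $\mathcal H$-negligible set, every point of $\overline{I_E}^1$ lies in $I_E\cup\partial^*E$; equivalently, that $O_E\setminus\partial^*E=O_E$ is, up to an $\mathcal H$-null set, disjoint from $\overline{I_E}^1$, i.e.\ $I_E$ is $1$-thin at $\mathcal H$-a.e.\ point of $O_E$. Since $O_E=X\setminus(I_E\cup\partial^*E)$, and since $\mathcal H(\partial^*E)<\infty$ with $P(E,\cdot)$ concentrated on $\partial^*E$, the content of the claim is a thinness statement about the measure-theoretic interior $I_E$ at points of the measure-theoretic exterior $O_E$.

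First I would fix $x\in O_E$, so that $\mu(B(x,r)\cap E)/\mu(B(x,r))\to 0$, hence also $\mu(B(x,r)\cap I_E)/\mu(B(x,r))\to 0$ as $r\to 0$ (recall $\mu(E\Delta I_E)=0$). In particular, for all sufficiently small $r$ the density smallness condition \eqref{eq:little G} of Lemma \ref{lem:local boxing inequality} holds with $G=E$ on the ball $B(x,2r)$. That lemma then gives
\[
\rcapa_1(B(x,r)\cap I_E,B(x,2r))\le C_1\,P(E,B(x,2r))
\]
for all small $r$. Therefore
\[
r\,\frac{\rcapa_1(B(x,r)\cap I_E,B(x,2r))}{\mu(B(x,r))}\le C_1\,r\,\frac{P(E,B(x,2r))}{\mu(B(x,r))}.
\]
So it suffices to show that the right-hand side tends to $0$ as $r\to 0$ for $\mathcal H$-a.e.\ $x\in O_E$; once that is established, $I_E$ is $1$-thin at such $x$, so $x\notin\overline{I_E}^1$, and the exceptional set $N$ is exactly the $\mathcal H$-null set of bad points in $O_E$ together with whatever null set is needed below.

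The quantity $r\,P(E,B(x,2r))/\mu(B(x,r))$ is, up to the fixed doubling factor controlling $\mu(B(x,2r))/\mu(B(x,r))$, comparable to $2r\,\nu(B(x,2r))/\mu(B(x,2r))$ where $\nu:=P(E,\cdot)$; and since $\nu=\theta_E\,\mathcal H\llcorner\partial^*E$ with $\theta_E$ bounded, $\nu$ is absolutely continuous with respect to $\mathcal H\llcorner\partial^*E$. The standard fact I would invoke (or prove by a Vitali/covering argument adapted to the content $\mathcal H$, exactly as in the Euclidean density theory and as available in the references \cite{LaSh,A1}) is that for a measure of the form $\nu=\theta\,\mathcal H\llcorner\partial^*E$ with $\mathcal H(\partial^*E)<\infty$, one has $\lim_{r\to 0} r\,\nu(B(x,r))/\mu(B(x,r))=0$ for $\mathcal H$-a.e.\ $x\notin\partial^*E$; indeed the upper density $\limsup_{r\to 0} r\,\mathcal H(\partial^*E\cap B(x,r))/\mu(B(x,r))$ vanishes $\mathcal H$-a.e.\ on the complement of $\partial^*E$, because on the set where it is $\ge\lambda>0$ one gets $\mathcal H$-content estimates forcing that set to meet $\partial^*E$ in full $\mathcal H$-measure. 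Combining this with the boundedness $\theta_E\le C_d$ handles $\mathcal H$-a.e.\ point of $O_E$, and also trivially all of $I_E$ and $\partial^*E$ which are already allowed, so the desired inclusion $\overline{I_E}^1\subset I_E\cup\partial^*E\cup N$ follows.

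The main obstacle is the density lemma for $\mathcal H$ in the last step: one must be careful that $\mathcal H$ is a codimension-one Hausdorff measure built from the content $\mathcal H_R$, so the usual Besicovitch/Vitali differentiation machinery is not completely automatic, but in the doubling setting the relevant covering arguments do go through, and this is precisely the kind of estimate established in \cite{LaSh} and used in \cite{L}; I would cite \cite[Proposition 4.4]{L} for the statement, noting that the proof there (or a direct repetition of it) yields the $\mathcal H$-negligible exceptional set $N$. A secondary point to check is that Lemma \ref{lem:local boxing inequality} is applied with the correct ball relationship ($B(x,r)$ inside $B(x,2r)$) and that its hypothesis \eqref{eq:little G} is indeed met for all small $r$ when $x\in O_E$, which is immediate from the definition of $O_E$.
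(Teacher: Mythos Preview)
Your argument assembles the right ingredients --- Lemma~\ref{lem:local boxing inequality} to bound $\rcapa_1(B(x,r)\cap I_E,B(x,2r))$ by $C_1 P(E,B(x,2r))$, and a density result to make $r\,P(E,B(x,2r))/\mu(B(x,r))\to 0$ at $\mathcal H$-a.e.\ $x\notin\partial^*E$ --- and these are exactly what the paper uses. The gap is in the sentence ``$I_E$ is $1$-thin at such $x$, so $x\notin\overline{I_E}^1$.'' The implication ``$A$ thin at $x\notin A$ $\Rightarrow$ $x\notin\overline A^1$'' is the nontrivial half of Corollary~\ref{cor:characterization of fine boundary}; in the paper that corollary is obtained via Proposition~\ref{prop:characterization of fine interior} and Lemma~\ref{lem:capacity of fine closure preliminary result}, and the latter \emph{uses the present lemma}. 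So invoking it here is circular. From the raw definition, $x\notin\overline{I_E}^1$ means there exists a $1$-finely open $U\ni x$ disjoint from $I_E$, and thinness of $I_E$ at the single point $x$ does not by itself produce such a $U$.

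The paper avoids this by arguing in the other direction: rather than excluding individual points from $\overline{I_E}^1$, it exhibits the explicit set $F:=I_E\cup\partial^*E\cup N$ and verifies that $F$ is $1$-thin at \emph{every} point of $X\setminus F=O_E\setminus N$, so that $F$ is $1$-finely closed by definition and hence $\overline{I_E}^1\subset F$. Your computation already handles the $I_E$ piece; to finish along these lines you must also show that $\partial^*E$ is thin at those points (combine \eqref{eq:capacity and Hausdorff measure} with the same density fact, now applied to $\nu=\mathcal H|_{\partial^*E}$) and that the null set $N$ is thin (again by \eqref{eq:capacity and Hausdorff measure}, since $\mathcal H(N)=0$). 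Once all three pieces are thin at each $x\in O_E\setminus N$, so is their union, and the $1$-fine closedness of $F$ follows straight from Definition~\ref{def:1 fine topology}, with no appeal to later results.
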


\begin{proof}
By \cite[Theorem 2.4.3]{AT} we know that if $\nu$ is a Radon measure on $X$, $t>0$, and $A\subset X$ is a Borel set for which we have
\[
\limsup_{r\to 0}r\frac{\nu(B(x,r))}{\mu(B(x,r))}\ge t
\]
for all $x\in A$, then $\nu(A)\ge t\mathcal H(A)$.
Since $E$ is of finite perimeter, we have $\mathcal H(\partial^*E)<\infty$ by \eqref{eq:def of theta}. By using \eqref{eq:capacity and Hausdorff measure} and the above density result with $\nu=\mathcal H|_{\partial^*E}$, we get
\begin{equation}\label{eq:estimate for density of measure theoretic boundary of E}
\limsup_{r\to 0}r\frac{\rcapa_1(B(x,r) \cap\partial^*E ,B(x,2r))}{\mu(B(x,r))}
\le C_d\limsup_{r\to 0}r\frac{\mathcal H(B(x,r)\cap\partial^*E )}{\mu(B(x,r))}=0
\end{equation}
for $\mathcal H$-almost every $x\in X\setminus \partial^* E$, that is,
for every $x\in X\setminus (\partial^*E\cup N)$ with $\mathcal H(N)=0$.

By Lemma \ref{lem:local boxing inequality}, if $x\in X$ and $r>0$ satisfy
\[
\frac{\mu(B(x,2r)\cap E )}{\mu(B(x,2r))}\le \frac{1}{2C_d^{\lceil\log_2 ( 128\lambda)\rceil}},
\]
then $\rcapa_1 (B(x,r)\cap I_E ,B(x,2r))\le C_1 P(E,B(x,2 r))$.
Thus we get for all $x\in X\setminus (I_E\cup \partial^*E\cup N)$
\begin{align*}
\limsup_{r\to 0}r\frac{\rcapa_1 (B(x,r) \cap I_E ,B(x,2r))}{\mu(B(x,r))}
&\le C_1\limsup_{r\to 0}r\frac{P(E,B(x,2 r))}{\mu(B(x,r))}\\
&\overset{\eqref{eq:def of theta}}{\le} C_1 C_d\limsup_{r\to 0}r\frac{\mathcal H(\partial^*E\cap B(x,2 r))}{\mu(B(x,r))}\\
&\le C_1 C_d^2\limsup_{r\to 0}r\frac{\mathcal H(\partial^*E\cap B(x,2 r))}{\mu(B(x,2r))}\\
&= 0
\end{align*}
by the equality in \eqref{eq:estimate for density of measure theoretic boundary of E}.
By combining this with \eqref{eq:estimate for density of measure theoretic boundary of E}, and \eqref{eq:capacity and Hausdorff measure} with $A=N$, we have
\[
\limsup_{r\to 0}r\frac{\rcapa_1 (B(x,r)\cap(I_E\cup \partial^*E\cup N), B(x,2r))}{\mu(B(x,r))}
=0
\]
for all $x\in X\setminus (I_E\cup \partial^*E\cup N)$.
Thus $I_E\cup \partial^*E\cup N\supset I_E$ is a $1$-finely closed set, so that $\overline{I_E}^1\subset  I_E\cup \partial^*E\cup N$.
\end{proof}

\begin{theorem}[{\cite[Corollary 5.4]{L}}]\label{thm:fine continuity of Newtonian functions}
Let $u\in N^{1,1}(X)$. Then $u$ is continuous (in the sense of a real-valued function) with respect to the $1$-fine topology $1$-quasieverywhere.
\end{theorem}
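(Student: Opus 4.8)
The plan is to adapt the classical route from fine potential theory for $p>1$ (cf.\ \cite[Section 7]{BB-OD} and \cite{HKM}) to the case $p=1$. The point is to reduce $1$-fine continuity of $u$ at a point $x$ --- understood as: $u(x)\in\R$, and for every $\eps>0$ the set $\{|u-u(x)|\ge\eps\}$ is $1$-thin at $x$ (see \cite[Section 4]{L}) --- to the thinness at $x$ of the one-sided sets $\{u\ge u(x)+\eps\}$ and $\{u\le u(x)-\eps\}$ separately; their union is then $1$-thin at $x$ by subadditivity of $\rcapa_1$, and, $u$ being finite $1$-q.e., the resulting continuity is continuity in the sense of a real-valued function. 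The whole task is therefore to exhibit a set of zero $1$-capacity outside of which this one-sided thinness holds.

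First I would record two pointwise facts, each valid for $1$-quasievery $x\in X$. (i) By the $1$-quasicontinuity of Newtonian functions and the Lebesgue point theorem (see \cite{LaSh,BB}), $u^{\wedge}(x)=u^{\vee}(x)=u(x)\in\R$ for $1$-q.e.\ $x$. (ii) Fixing a $1$-weak upper gradient $g_u\in L^1(X)$ of $u$, one has $\limsup_{r\to0}\,r\,\mu(B(x,r))^{-1}\int_{B(x,2r)}g_u\,d\mu=0$ for $1$-q.e.\ $x$. Fact (ii) I would obtain from the density estimate \cite[Theorem 2.4.3]{AT} (already used for Lemma \ref{lem:fine closure of a set of finite perimeter}) applied to the Radon measure $g_u\,\mu$ --- splitting $g_u$ into a bounded part, harmless by doubling, and an $L^1$-small part, in order to pass from ``finite codimension one Hausdorff measure'' to ``zero Hausdorff measure'' --- combined with the fact that $\mathcal H$-null sets have zero $1$-capacity, which follows from \eqref{eq:capacity and Hausdorff measure} by summing cutoff functions over an efficient covering.

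The analytic core is a single capacity estimate. For real numbers $\lambda'<\lambda$, the truncation $v:=\min\{1,(u-\lambda')^+/(\lambda-\lambda')\}$ equals $1$ on $\{u\ge\lambda\}$, vanishes on $\{u\le\lambda'\}$, and admits $(\lambda-\lambda')^{-1}g_u\ch_{\{\lambda'<u<\lambda\}}$ as a $1$-weak upper gradient; multiplying by a Lipschitz cutoff $\eta$ that is $1$ on $B(x,r)$, supported in $B(x,2r)$, with $g_\eta\le r^{-1}\ch_{B(x,2r)}$, the function $\eta v$ is admissible for $\rcapa_1(B(x,r)\cap\{u\ge\lambda\},B(x,2r))$ and, by the product rule for upper gradients, has a $1$-weak upper gradient at most $(\lambda-\lambda')^{-1}g_u+r^{-1}\ch_{B(x,2r)\cap\{u>\lambda'\}}$, whence
\[
\rcapa_1\bigl(B(x,r)\cap\{u\ge\lambda\},B(x,2r)\bigr)\le\frac{1}{\lambda-\lambda'}\int_{B(x,2r)}g_u\,d\mu+\frac{1}{r}\,\mu\bigl(B(x,2r)\cap\{u>\lambda'\}\bigr).
\]
For $x$ as in (i)--(ii) and a given $\eps>0$, I would take $\lambda=u(x)+\eps$ and $\lambda'\in(u(x),u(x)+\eps/2)$: the first term multiplied by $r/\mu(B(x,r))$ tends to $0$ by (ii), and the second, since $\lambda'>u^{\vee}(x)$ forces $\mu(B(x,2r)\cap\{u>\lambda'\})/\mu(B(x,2r))\to0$, tends to $0$ after division by $\mu(B(x,r))$ using doubling. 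Hence $\{u\ge u(x)+\eps\}$ is $1$-thin at $x$; applying the same estimate to $-u$ and using $u^{\wedge}(x)=u(x)$ handles $\{u\le u(x)-\eps\}$, and subadditivity of $\rcapa_1$ finishes the argument.

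The main obstacle is fact (ii): converting the mere $L^1$-integrability of $g_u$ into a \emph{codimension one} density statement holding outside a set of zero $1$-capacity. This is where the comparison \eqref{eq:capacity and Hausdorff measure} between $\rcapa_1$ and $\mathcal H$, the density theorem of \cite{AT}, and the coincidence of $\mathcal H$-null sets with sets of zero $1$-capacity must be combined with some care; by comparison, the cutoff construction, the product rule for upper gradients, and the final limiting argument are routine.
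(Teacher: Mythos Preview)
The paper does not prove this statement; it is imported from \cite[Corollary~5.4]{L} and used as a black box. There is therefore no proof here to compare against, and your proposal supplies something the paper itself omits.

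Your argument is correct. Fact~(i) is the standard Lebesgue-point property of $N^{1,1}$ functions. Fact~(ii) works as you indicate: writing $g_u=g_u\ch_{\{g_u\le M\}}+g_u\ch_{\{g_u>M\}}$, the bounded part contributes $O(r)$ to the density ratio, while for the tail \cite[Theorem~2.4.3]{AT} bounds the $\mathcal H$-measure of $\{x:\limsup_{r\to0}r\,\mu(B(x,r))^{-1}\int_{B(x,r)}g_u\ch_{\{g_u>M\}}\,d\mu\ge t\}$ by $t^{-1}\|g_u\ch_{\{g_u>M\}}\|_{L^1}\to 0$ as $M\to\infty$; hence the exceptional set is $\mathcal H$-null and, by \eqref{eq:null sets of Hausdorff measure and capacity} (already available from \cite{HaKi}, so you need not rederive it from \eqref{eq:capacity and Hausdorff measure}), of zero $1$-capacity. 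The truncation--cutoff capacity estimate and the passage to the limit are routine.

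One caution on logical order. You read $1$-fine continuity of $u$ at $x$ as the condition that $\{|u-u(x)|\ge\eps\}$ is $1$-thin at $x$ for every $\eps>0$. In the topological sense (continuity for the $1$-fine topology of Definition~\ref{def:1 fine topology}) this equivalence amounts to Proposition~\ref{prop:characterization of fine interior}, whose proof \emph{in the present paper} passes through Lemma~\ref{lem:capacity of fine closure preliminary result} and hence through Theorem~\ref{thm:fine continuity of Newtonian functions} itself. To avoid circularity you should either adopt the thinness-of-level-sets condition as the working definition of $1$-fine continuity (this is standard and, as one checks directly, suffices for every subsequent application in the paper, in particular for the step in Lemma~\ref{lem:capacity of fine closure preliminary result} where one needs $v(y)\ge 1$ at points $y\in b_1 I_{\{u>t\}}$), or cite the equivalence from \cite{L}, where it is established independently of Corollary~5.4.
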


By \cite[Theorem 4.3, Theorem 5.1]{HaKi} we know that if $A\subset X$,
\begin{equation}\label{eq:null sets of Hausdorff measure and capacity}
\capa_1(A)=0\quad\ \textrm{if and only if}\quad\ \mathcal H(A)=0.
\end{equation}

\begin{lemma}\label{lem:capacity of fine closure preliminary result}
Let $x\in X$, let $0<r<\diam(X)/8$, and let $A\subset B(x,r)$ with
\begin{equation}\label{eq:a priori smallness of capacity}
r\frac{\rcapa_1(A,B(x,2r))}{\mu(B(x,r))}\le \frac{1}{8 C_S C_d^{\lceil \log_2(128\lambda)\rceil}}.
\end{equation}
Then we have
\[
\rcapa_1(B(x,r)\cap \overline{A}^1,B(x,2r))\le C_1 \rcapa_1(A,B(x,2r)),
\]
where $C_1=C_1(C_d,C_P,\lambda)$ is the constant from Lemma \ref{lem:local boxing inequality}.
\end{lemma}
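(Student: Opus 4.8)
The plan is to relate the $1$-fine closure $\overline{A}^1$ to the measure theoretic interior of a suitable superlevel set, and then apply the local boxing inequality (Lemma~\ref{lem:local boxing inequality}). Fix a function $u\in N^{1,1}(X)$ with $u\geq 1$ on $A$, $u=0$ on $X\setminus B(x,2r)$, and $\int_X g_u\,d\mu \leq 2\rcapa_1(A,B(x,2r))$ for some upper gradient $g_u$; we may assume $0\le u\le 1$ by truncation. Set $G:=\{u>1/2\}\subset B(x,2r)$. By the Sobolev inequality \eqref{eq:sobolev inequality} applied on $B(x,2r)$, $\mu(B(x,2r)\cap G)\le 2\int_{B(x,2r)}|u|\,d\mu \le 2\cdot 2C_S r\int_X g_u\,d\mu$, so by the smallness hypothesis \eqref{eq:a priori smallness of capacity} we can arrange that $\mu(B(x,2r)\cap G)/\mu(B(x,2r))$ is below the threshold $1/(2C_d^{\lceil\log_2(128\lambda)\rceil})$ required in \eqref{eq:little G}. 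By the coarea formula \eqref{eq:coarea}, $\int_0^1 P(\{u>t\},B(x,2r))\,dt\le \int_X g_u\,d\mu$, so we may choose $t\in(1/4,1/2)$ with $P(\{u>t\},B(x,2r))\le 4\int_X g_u\,d\mu$; replace $G$ by this $\{u>t\}$, which still satisfies the smallness bound (enlarging constants if needed), and additionally has finite perimeter in $B(x,2r)$ with a good bound.

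Next I would show $A\subset I_G$ up to a capacity-null (equivalently, by \eqref{eq:null sets of Hausdorff measure and capacity}, $\mathcal H$-null) set, and then pass to fine closures. Since $u\ge 1>t$ on $A$ and $u\in N^{1,1}(X)$ is $1$-finely continuous $1$-quasieverywhere by Theorem~\ref{thm:fine continuity of Newtonian functions}, at $1$-quasievery $x_0\in A$ the set $\{u>t\}$ is a $1$-fine neighborhood of $x_0$; in particular $x_0\in \overline{\{u>t\}}^1$, and in fact (using that $1$-finely open sets have full density, e.g.\ via the characterization of $1$-thinness and the Sobolev inequality as in the proof of Lemma~\ref{lem:inclusion for fine boundaries}) $x_0\in I_G$. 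Hence $A\subset I_G\cup N_0$ with $\capa_1(N_0)=0$, so $\overline{A}^1\subset \overline{I_G}^1 \cup \overline{N_0}^1$. A capacity-null set is $1$-thin everywhere, so $\overline{N_0}^1=N_0$ (or at worst is again capacity-null), and by \eqref{eq:capacity and Hausdorff measure} together with \eqref{eq:null sets of Hausdorff measure and capacity} it contributes nothing to $\rcapa_1$. Thus $\rcapa_1(B(x,r)\cap\overline{A}^1,B(x,2r)) \le \rcapa_1(B(x,r)\cap\overline{I_G}^1,B(x,2r))$.

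It remains to bound $\rcapa_1(B(x,r)\cap\overline{I_G}^1,B(x,2r))$. Here I would invoke (the proof technique of) Lemma~\ref{lem:fine closure of a set of finite perimeter}: since $G$ has finite perimeter in $B(x,2r)$, its $1$-fine closure is contained in $I_G\cup\partial^*G\cup N_1$ with $\mathcal H(N_1)=0$, and moreover a covering/boxing argument shows $\rcapa_1(B(x,r)\cap(I_G\cup\partial^*G),B(x,2r))\le C_1 P(G,B(x,2r))$ — this is exactly the content of Lemma~\ref{lem:local boxing inequality}, whose hypothesis \eqref{eq:little G} we have verified. Combining with the perimeter bound $P(G,B(x,2r))\le 4\int_X g_u\,d\mu\le 8\rcapa_1(A,B(x,2r))$ and absorbing constants (or, more cleanly, optimizing the choices of $u$ and $t$ so that the final constant is exactly the $C_1$ of Lemma~\ref{lem:local boxing inequality}, as the statement demands), we obtain $\rcapa_1(B(x,r)\cap\overline{A}^1,B(x,2r))\le C_1\rcapa_1(A,B(x,2r))$.

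The main obstacle I anticipate is the bookkeeping of constants: the statement claims the constant is \emph{exactly} the $C_1$ from Lemma~\ref{lem:local boxing inequality}, so the factors of $2$ and $4$ introduced by truncating $u$, choosing the level $t$ via coarea, and estimating $P(G,\cdot)$ must either be shown to be harmless (because Lemma~\ref{lem:local boxing inequality} already carries slack) or eliminated by taking infima/limits over admissible $u$. A secondary technical point is making rigorous the step $A\subset I_G$ up to capacity-null sets — i.e.\ upgrading ``$1$-fine neighborhood'' to ``full measure density'' — but this is precisely the argument already used in Lemma~\ref{lem:inclusion for fine boundaries} and should go through verbatim.
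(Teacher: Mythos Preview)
Your approach is essentially the paper's, and the two obstacles you anticipate are exactly where the paper's execution differs from your sketch.

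For the constant: the paper uses the outer-capacity property of $\rcapa_1$ to take $u$ with $u\ge 1$ on an open \emph{neighborhood} of $A$ and $\int_X g_u\,d\mu\le \rcapa_1(A,B(x,2r))+\eps$, then picks $t\in(0,1)$ via coarea so that $P(\{u>t\},X)\le\|Du\|(X)\le\rcapa_1(A,B(x,2r))+\eps$ with no factors of $2$ or $4$, and lets $\eps\to 0$ at the end. The open neighborhood also makes $A\subset\inte\{u>t\}\subset I_{\{u>t\}}$ immediate, so your fine-continuity detour for the inclusion $A\subset I_G$ (up to a null set) is unnecessary.

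There is one genuine slip: Lemma~\ref{lem:local boxing inequality} bounds $\rcapa_1(B(x,r)\cap I_G,B(x,2r))$, not $\rcapa_1(B(x,r)\cap(I_G\cup\partial^*G),B(x,2r))$ as you assert. Adding the $\partial^*G$ contribution via \eqref{eq:capacity and Hausdorff measure} and \eqref{eq:def of theta} would spoil the constant. The paper instead picks a test function $v$ (nearly) achieving $\rcapa_1(B(x,r)\cap I_{\{u>t\}},B(x,2r))$ and argues, via fine continuity of $v$ (Theorem~\ref{thm:fine continuity of Newtonian functions}) together with $\partial^*\{u>t\}\subset b_1 I_{\{u>t\}}$ (Lemma~\ref{lem:inclusion for fine boundaries}), that $v\ge 1$ $1$-quasieverywhere on $B(x,r)\cap\partial^*\{u>t\}$ as well. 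Hence the \emph{same} $v$ is admissible for $B(x,r)\cap\overline{A}^1$, and the constant remains exactly $C_1$.
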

\begin{proof}
Fix $\eps>0$. By the definition of the variational capacity and the fact that it is an outer capacity, we can pick $u\in N^{1,1}(X)$ with $u\ge 1$ in a neighborhood of $A$,
$u=0$ in $X\setminus B(x,2r)$, and
\[
\rcapa_1(A, B(x,2r))+\eps\ge \int_X g_u\,d\mu \ge \Vert Du\Vert(X),
\]
where $g_u$ is an upper gradient of $u$,
and where the last inequality follows from the fact that Lipschitz functions are dense in
$N^{1,1}(X)$, see e.g. \cite[Theorem 5.1]{BB}.
By using the coarea formula \eqref{eq:coarea}, we find a number $t\in (0,1)$ such that
\begin{equation}\label{eq:estimate for perimeter by means of capacity}
P(\{u>t\},X)\le \Vert Du\Vert(X)\le \rcapa_1(A, B(x,2r))+\eps.
\end{equation}
By the isoperimetric inequality \eqref{eq:isop inequality with zero boundary values},
we have
\begin{align*}
\mu(\{u>t\})
&\le 2 C_S r P(\{u>t\},X)\\
&\le 2 C_S r \rcapa_1(A, B(x,2r))+2C_S r\eps\\
&\overset{\eqref{eq:a priori smallness of capacity}}{\le} 2 C_S r\frac{\mu(B(x,r))}{8 C_S C_d^{\lceil\log_2(128\lambda)\rceil}r}+2C_S r\eps\\
&=  \frac{\mu(B(x,r))}{4 C_d^{\lceil\log_2(128\lambda)\rceil}}+2C_S r\eps.
\end{align*}
Thus by assuming that $\eps\le \mu(B(x,r))/(8C_S C_d^{\lceil\log_2(128\lambda)\rceil}r)$, we have
\[
\frac{\mu(\{u>t\})}{\mu(B(x,2r))}\le \frac{\mu(\{u>t\})}{\mu(B(x,r))}
\le \frac{1}{2 C_d^{\lceil\log_2(128\lambda)\rceil}},
\]
and now by Lemma \ref{lem:local boxing inequality} we have
\begin{equation}\label{eq:estimate for capacity of measure theoretic interior}
\begin{split}
\rcapa_1(B(x, r)\cap I_{\{u>t\}},B(x,2r)) 
&\le C_1 P(\{u>t\},B(x,2r))\\
&\le C_1 P(\{u>t\},X)\\
&\le C_1 \rcapa_1(A, B(x,2r))+C_1\eps.
\end{split}
\end{equation}
by \eqref{eq:estimate for perimeter by means of capacity}.
Again by the definition of the variational capacity, we find a function
$v\in N^{1,1}(X)$ with $v\ge 1$ in $B(x,r)\cap I_{\{u>t\}}$,
$v=0$ in $X\setminus B(x,2r)$, and
\begin{equation}\label{eq:estimate for upper gradient of v}
\int_X g_v\,d\mu\le \rcapa_1(B(x, r)\cap I_{\{u>t\}},B(x,2r))+\eps,
\end{equation}
where $g_v$ is an upper gradient of $v$.
Note that
\begin{equation}\label{eq:first inclusion for A}
A\subset B(x,r)\cap \inte(\{u>t\})\subset B(x,r) \cap I_{\{u>t\}}.
\end{equation}
For a suitable $\mathcal H$-negligible set $N\subset X$, by \eqref{eq:first inclusion for A} and Lemma \ref{lem:fine closure of a set of finite perimeter} we have
\begin{equation}\label{eq:second inclusion for A}
B(x,r)\cap \overline{A}^1\subset B(x,r)\cap \overline{I_{\{u>t\}}}^1\subset B(x,r)\cap (I_{\{u>t\}}\cup \partial^*\{u>t\}\cup N).
\end{equation}
Let $\widetilde{N}\subset X$ be the set of points where $v$ is not $1$-finely continuous.
By Theorem \ref{thm:fine continuity of Newtonian functions},
$\capa_1(\widetilde{N})=0$, and then by \eqref{eq:null sets of Hausdorff measure and capacity}, also
$\mathcal H(\widetilde{N})=0$.
If $y\in B(x,r)\cap \partial^*\{u>t\}\setminus \widetilde{N}$, we deduce $y\in B(x,r)\cap b_1 I_{ \{u>t\}}\setminus \widetilde{N}$ by applying Lemma \ref{lem:inclusion for fine boundaries} with $A=I_{\{u>t\}}$ and noting that $\partial^*\{u>t\}=\partial^*I_{\{u>t\}}$. Thus
necessarily $v(y)\ge 1$. Thus $v\ge 1$ $1$-quasieverywhere in the set
\[
B(x,r)\cap (I_{\{u> t\}}\cup \partial^*\{u> t\}\cup N).
\]
Thus by \eqref{eq:second inclusion for A}, $v\ge 1$ $1$-quasieverywhere in $B(x,r)\cap \overline{A}^1$, and so by  \eqref{eq:estimate for capacity of measure theoretic interior} and \eqref{eq:estimate for upper gradient of v},  we get
\[
\rcapa_1(B(x,r)\cap \overline{A}^1,B(x,2r))\le \int_X g_v\,d\mu
\le C_1 \rcapa_1(A, B(x,2r))+C_1\eps+\eps.
\]
Letting $\eps\to 0$, we obtain the result.
\end{proof}

Now we can give a suitable characterization of the $1$-fine interior. We take the proof almost directly from \cite[Proposition 7.8]{BB-OD}, where it is given for $p>1$.

\begin{proposition}\label{prop:characterization of fine interior}
Let $A\subset X$. Then $\fint A=A\setminus b_1 (X\setminus A)$.
\end{proposition}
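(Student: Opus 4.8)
The claim is the set identity $\fint A = A \setminus b_1(X\setminus A)$. I would prove the two inclusions separately, and the proof splits naturally according to whether $x$ is a "good" point (where the relevant capacitary density is small) or not.

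First, the easy inclusion $\fint A \subset A \setminus b_1(X\setminus A)$. By definition $\fint A = X \setminus \overline{X\setminus A}^1$, and $\overline{X\setminus A}^1 \supset b_1(X\setminus A)$ always (as noted after Definition \ref{def:1 fine topology}); moreover by Lemma \ref{lem:inclusion for fine boundaries}, $b_1(X\setminus A) \supset I_{X\setminus A} \cup \partial^*(X\setminus A) = O_A \cup \partial^* A$, which together with $I_A$ partitions $X$. So $X \setminus \overline{X\setminus A}^1 \subset X \setminus b_1(X\setminus A) \subset I_A \subset A$ up to measure zero — but to get it literally, note $\fint A$ is finely open hence $X\setminus A \subset X \setminus \fint A$ is... actually the cleanest route: $\fint A \subset X\setminus b_1(X\setminus A)$ is immediate from $b_1(X\setminus A)\subset \overline{X\setminus A}^1$; and $\fint A \subset A$ is immediate since $\fint A$ is the fine interior of $A$. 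So $\fint A \subset A \setminus b_1(X\setminus A)$ with essentially no work.

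The substantive inclusion is $A \setminus b_1(X\setminus A) \subset \fint A$. Fix $x \in A$ with $x \notin b_1(X\setminus A)$, i.e. $X\setminus A$ is $1$-thin at $x$:
\[
\lim_{r\to 0} r\,\frac{\rcapa_1(B(x,r)\cap (X\setminus A), B(x,2r))}{\mu(B(x,r))} = 0.
\]
I must produce a $1$-finely open set $U$ with $x \in U \subset A$. The natural candidate is $U = X \setminus \overline{B(x,r_0)\cap(X\setminus A)}^1$ for a small fixed radius $r_0$; this is finely open by construction, and it contains $x$ provided $x \notin \overline{B(x,r_0)\cap(X\setminus A)}^1$ — which is where Lemma \ref{lem:capacity of fine closure preliminary result} enters. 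Applying that lemma with $A$ replaced by $B(x,r)\cap(X\setminus A)$ (legitimate once $r$ is small enough that the a priori smallness hypothesis \eqref{eq:a priori smallness of capacity} holds, which the $1$-thinness assumption guarantees), I get
\[
\rcapa_1\!\bigl(B(x,r)\cap \overline{B(x,r)\cap(X\setminus A)}^1,\, B(x,2r)\bigr) \le C_1\,\rcapa_1\bigl(B(x,r)\cap(X\setminus A), B(x,2r)\bigr),
\]
so multiplying by $r/\mu(B(x,r))$ and letting $r\to 0$ shows that $\overline{B(x,r)\cap(X\setminus A)}^1$ is again $1$-thin at $x$; in particular $x$ does not lie in this fine closure, so $x \in U$. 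Finally I need $U \subset A$, equivalently $X\setminus A \subset \overline{B(x,r_0)\cap(X\setminus A)}^1$ — but this is false globally, so instead one works locally: it suffices to show $x$ has a fine neighborhood inside $A$, and $U \cap B(x,r_0/2)$ works once one checks $(X\setminus A)\cap B(x,r_0/2) \subset B(x,r_0)\cap(X\setminus A) \subset \overline{B(x,r_0)\cap(X\setminus A)}^1$, which is trivial. Since fine-openness is a local property (it is a genuine topology), $x$ having a fine neighborhood in $A$ means $x \in \fint A$.

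The main obstacle is the bookkeeping around Lemma \ref{lem:capacity of fine closure preliminary result}: one must verify its smallness hypothesis \eqref{eq:a priori smallness of capacity} is eventually satisfied (immediate from $1$-thinness, since $r\,\rcapa_1/\mu \to 0$), and one must be careful that the lemma controls $\overline{A}^1 \cap B(x,r)$ rather than all of $\overline{A}^1$ — hence the restriction to a small ball and the appeal to locality of the fine topology to conclude. A secondary subtlety is making sure the $\limsup$ of the right-hand side $r\,\rcapa_1(B(x,r)\cap(X\setminus A),B(x,2r))/\mu(B(x,r))$ really is zero along the full sequence $r\to 0$ (it is, by hypothesis), so that the fine-closure set inherits $1$-thinness at $x$.
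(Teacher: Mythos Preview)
Your approach matches the paper's, and the easy inclusion is fine. For the hard inclusion, however, there is a genuine gap in the bookkeeping around Lemma~\ref{lem:capacity of fine closure preliminary result}. The estimate you display,
\[
\rcapa_1\bigl(B(x,r)\cap \overline{B(x,r)\setminus A}^1,\, B(x,2r)\bigr) \le C_1\,\rcapa_1\bigl(B(x,r)\setminus A,\, B(x,2r)\bigr),
\]
carries the \emph{same} radius $r$ inside the fine closure and in the ambient ball. But to show that the \emph{fixed} set $F_{r_0}:=\overline{B(x,r_0)\setminus A}^1$ is $1$-thin at $x$ you must bound $\rcapa_1(B(x,r)\cap F_{r_0}, B(x,2r))$ for all small $r\le r_0$, and since $F_{r_0}\supset F_r$ by monotonicity of the fine closure, your inequality goes the wrong way a~priori. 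Writing ``letting $r\to 0$ shows that $\overline{B(x,r)\cap(X\setminus A)}^1$ is $1$-thin at $x$'' conflates the scale variable with the set-defining radius. The paper supplies precisely the missing step: for $0<r\le r_0$ the set $F_r\cup(X\setminus B(x,r))$ is $1$-finely closed (union of a finely closed and a closed set) and contains $B(x,r_0)\setminus A$, hence contains $F_{r_0}$; intersecting with $B(x,r)$ yields $B(x,r)\cap F_{r_0}\subset B(x,r)\cap F_r$, after which your displayed estimate does the job.

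A smaller point: from ``$F_{r_0}$ is $1$-thin at $x$'' you jump to ``$x\notin F_{r_0}$'', but $x\notin b_1 S$ does not immediately give $x\notin\overline S^1$, and you cannot invoke Corollary~\ref{cor:characterization of fine boundary} since it is a consequence of the proposition being proved. The paper sidesteps this by observing directly that $(B(x,r_0)\setminus F_{r_0})\cup\{x\}$ is $1$-finely open (thinness of $F_{r_0}$ at $x$ handles the new point) and contained in $A$, which gives $x\in\fint A$ without needing $x\notin F_{r_0}$.
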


\begin{proof}
If $x\in \fint A$, then by definition $X\setminus \fint A$ is
$1$-thin at $x$, and thus so is $X\setminus A$.
Thus $x\in A\setminus b_1(X\setminus A)$.

Conversely, assume that $X\setminus A$ is $1$-thin at $x\in A$, i.e.
\begin{equation}\label{eq:assumption that complement of A is thin}
\lim_{r\to 0} r \frac{\rcapa_1(B(x,r)\setminus A, B(x,2r))}{\mu(B(x,2r))}=0.
\end{equation}
For every $r>0$, let $F_r:=\overline{B(x,r)\setminus A}^1$.
Fix $s>0$.
We show that $F_s$ is $1$-thin at $x$. By \eqref{eq:assumption that complement of A is thin}
it suffices to show that for sufficiently small $0<r\le s$,
\begin{equation}\label{eq:fine closure is controlled for small radii}
\rcapa_1(B(x,r)\cap F_s, B(x,2r))\le C\rcapa_1(B(x,r)\setminus A, B(x,2r))
\end{equation}
for some constant $C>0$.
Note  that for $0<r\le s$, $F_r\cup (X\setminus B(x,r))$ is $1$-finely closed and contains $X\setminus A$, and hence also contains $F_s$. Thus
\[
B(x,r)\cap F_s\subset B(x,r)\cap (F_r\cup (X\setminus B(x,r)))= B(x,r)\cap F_r.
\]
Thus 
\begin{align*}
\rcapa_1(B(x,r)\cap F_s, B(x,2r))
&\le \rcapa_1(B(x,r)\cap F_r,B(x,2r))\\
&\le C_1\rcapa_1(B(x,r)\setminus A, B(x,2r))
\end{align*}
for sufficiently small $0<r\le s$ by Lemma \ref{lem:capacity of fine closure preliminary result}. This establishes \eqref{eq:fine closure is controlled for small radii},
and thus $F_s$ is $1$-thin at $x$.
The set $B(x,s) \setminus F_s$ is $1$-finely open and contained in $A$,
and since $F_s$ is $1$-thin at $x$, the set $(B(x,s)\setminus F_s)\cup\{x\}$ is also
$1$-finely open, and contained in $A$. Thus $(B(x,s)\setminus F_s)\cup \{x\}\subset
\fint A$,
and so $x\in \fint A$.
\end{proof}

Now we can characterize the $1$-fine closure and the $1$-fine boundary in the following way.

\begin{corollary}\label{cor:characterization of fine boundary}
Let $A\subset X$. Then $\overline{A}^1=A\cup b_1 A$ and
\[
\partial^1 A= (A\cap b_1 (X\setminus A))\cup ((X\setminus A)\cap b_1 A).
\]
\end{corollary}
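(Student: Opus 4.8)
The plan is to deduce Corollary \ref{cor:characterization of fine boundary} from Proposition \ref{prop:characterization of fine interior} together with Lemma \ref{lem:inclusion for fine boundaries}, using only formal manipulations of the $1$-fine topology. The two assertions are really two sides of the same coin: the statement about $\overline{A}^1$ is obtained by applying the statement about $\fint{}$ to the complement, and the statement about $\partial^1 A$ is then immediate from the definition $\partial^1 A=\overline{A}^1\setminus\fint A$.

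First I would prove $\overline{A}^1=A\cup b_1 A$. Apply Proposition \ref{prop:characterization of fine interior} with $A$ replaced by $X\setminus A$: this gives $\fint(X\setminus A)=(X\setminus A)\setminus b_1 A$. Taking complements and using $\overline{A}^1=X\setminus\fint(X\setminus A)$ (the definition of the $1$-fine closure as the complement of the $1$-fine interior of the complement), we get
\[
\overline{A}^1=X\setminus\bigl((X\setminus A)\setminus b_1 A\bigr)=A\cup b_1 A,
\]
where the last equality is the elementary set identity $X\setminus((X\setminus A)\setminus S)=A\cup S$ valid for any $S\subset X$. (No containment $b_1 A\subset\overline{A}^1$ is even needed here, though it is consistent with Lemma \ref{lem:inclusion for fine boundaries} and the remark following Definition \ref{def:1 fine topology}.)

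Next, for the $1$-fine boundary I would write, using the definition and the two characterizations just obtained,
\[
\partial^1 A=\overline{A}^1\setminus\fint A=(A\cup b_1 A)\setminus\bigl(A\setminus b_1(X\setminus A)\bigr).
\]
Now I expand this difference. A point lies in it iff it lies in $A\cup b_1 A$ but not in $A\setminus b_1(X\setminus A)$, i.e.\ iff it lies in $A\cup b_1 A$ and either lies outside $A$ or lies in $b_1(X\setminus A)$. Splitting into the cases ``in $A$'' and ``not in $A$'': if $x\in A$, then $x\in A\cup b_1 A$ automatically, and the condition becomes $x\in b_1(X\setminus A)$, contributing $A\cap b_1(X\setminus A)$; if $x\notin A$, then $x\in A\cup b_1 A$ forces $x\in b_1 A$, and the other condition is vacuous, contributing $(X\setminus A)\cap b_1 A$. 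This yields exactly
\[
\partial^1 A=(A\cap b_1(X\setminus A))\cup((X\setminus A)\cap b_1 A),
\]
as claimed. The only mild subtlety, which I would note explicitly rather than treat as an obstacle, is keeping the bookkeeping of the set-theoretic case split clean; everything else is a direct consequence of Proposition \ref{prop:characterization of fine interior} and the definitions, so I do not expect any real difficulty here.
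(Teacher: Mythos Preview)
Your proof is correct and follows essentially the same approach as the paper: both derive $\overline{A}^1=A\cup b_1 A$ by applying Proposition~\ref{prop:characterization of fine interior} to $X\setminus A$ and taking complements, and then compute $\partial^1 A=(A\cup b_1 A)\setminus(A\setminus b_1(X\setminus A))$. The only cosmetic difference is that the paper rewrites this difference as the intersection $(A\cup b_1 A)\cap((X\setminus A)\cup b_1(X\setminus A))$ and distributes, whereas you argue by a case split on whether $x\in A$; the content is identical.
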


\begin{proof}
Note that $\overline{A}^1=X\setminus \fint(X\setminus A)$. Thus by Proposition \ref{prop:characterization of fine interior},
\[
\overline{A}^1=X\setminus ((X\setminus A)\setminus b_1 A)=A\cup b_1 A.
\]
Then
\begin{align*}
\partial^1 A=\overline{A}^1\setminus \fint A
&=(A\cup b_1 A)\setminus (A\setminus b_1(X\setminus A))\\
&=(A\cup b_1 A)\cap ((X\setminus A)\cup b_1(X\setminus A))\\
&=(A\cap b_1(X\setminus A))\cup (b_1 A\cap (X\setminus A)).
\end{align*}
\end{proof}

The following proposition can be taken directly from \cite[Lemma 4.8]{BBL-WC}, where it is given in the case $p>1$. The proof is also verbatim the same, except that instead of
referring to \cite[Theorem 4.3]{BBL-WC} we refer to Theorem \ref{thm:fine continuity of Newtonian functions}.

\begin{proposition}\label{prop:capacity of fine closure}
Let $A\subset X$. Then $\capa_1(\overline{A}^1)=\capa_1(A)$. If $A\subset D\subset X$, then
\[
\rcapa_1(A,D)=\rcapa_1(\overline{A}^1\cap D,D).
\]
If furthermore $\rcapa_1(A,D)<\infty$, then $\capa_1(\overline{A}^1\setminus \fint D)=0$ and
\[
\rcapa_1(A,D)= \rcapa_1(\overline{A}^1\cap \fint D,\fint D).
\]
\end{proposition}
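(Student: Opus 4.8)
The plan is to reduce everything to the preliminary localized estimate of Lemma~\ref{lem:capacity of fine closure preliminary result}, the outer-capacity property of $\rcapa_1$, and the fine continuity Theorem~\ref{thm:fine continuity of Newtonian functions}, essentially following the $p>1$ argument in \cite[Lemma 4.8]{BBL-WC}. For the first claim, $\capa_1(\overline A^1)=\capa_1(A)$, the inequality ``$\ge$'' is immediate from monotonicity since $A\subset\overline A^1$. For ``$\le$'', fix $\eps>0$ and take $u\in N^{1,1}(X)$ with $u\ge 1$ in a (open) neighborhood of $A$ and $\Vert u\Vert_{N^{1,1}(X)}\le\capa_1(A)+\eps$; we may assume $u\ge 0$. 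Let $\widetilde N$ be the set where $u$ is not $1$-finely continuous; by Theorem~\ref{thm:fine continuity of Newtonian functions} and \eqref{eq:null sets of Hausdorff measure and capacity}, $\capa_1(\widetilde N)=0$. Since $\{u>1-\delta\}$ is a finely open (indeed open) set containing $A$, its complement is $1$-finely closed, so $\overline A^1\subset\overline{\{u>1-\delta\}}$, and fine continuity forces $u(y)\ge 1-\delta$ for every $y\in\overline A^1\setminus\widetilde N$. Hence $u/(1-\delta)$ (modified to be $\ge 1$ on the capacity-zero set $\widetilde N$, which changes the $N^{1,1}$ norm only by allowing an $\eps$-perturbation via an additional admissible function supported near $\widetilde N$) is admissible for $\capa_1(\overline A^1)$, giving $\capa_1(\overline A^1)\le(\capa_1(A)+\eps)/(1-\delta)+\eps$; let $\delta,\eps\to 0$.

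For the variational statement $\rcapa_1(A,D)=\rcapa_1(\overline A^1\cap D,D)$, again ``$\ge$'' is trivial; ``$\le$'' follows the same scheme: take $u$ admissible for $\rcapa_1(A,D)$ with $\int_X g_u\,d\mu\le\rcapa_1(A,D)+\eps$, so $u=0$ outside $D$ and $u\ge 1$ near $A$; arguing exactly as above with fine continuity, $u\ge 1$ $1$-quasieverywhere on $\overline A^1$, hence $1$-quasieverywhere on $\overline A^1\cap D$, and since modifying $u$ on a set of $1$-capacity zero does not change admissibility (as $\rcapa_1$ only sees $1$-quasieverywhere conditions), $u$ is admissible for $\rcapa_1(\overline A^1\cap D,D)$ after such a modification. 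Let $\eps\to 0$. The key subtlety here — and I expect this to be the main obstacle, exactly as in the $p>1$ case — is verifying that the fine-continuity conclusion ``$u\ge 1-\delta$ on $\overline A^1$ off a capacity-zero set'' is legitimate: one must argue that $\overline{A}^1\subset\overline{\{u>1-\delta\}}^{\,1}$ (using that $\{u>1-\delta\}^c$ is $1$-finely closed and contains $A^c$), and then that a $1$-finely continuous function cannot jump below its finely-limsup on a finely closed set; this is where the precise behavior of $u^\wedge,u^\vee$ and the structure of the $1$-fine topology from \cite{L} enter.

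For the last part, assume $\rcapa_1(A,D)<\infty$. To see $\capa_1(\overline A^1\setminus\fint D)=0$: take $u$ admissible for $\rcapa_1(A,D)$ with finite upper-gradient integral; then $u=0$ in $X\setminus D$, and by the first/second parts $u\ge 1$ $1$-quasieverywhere on $\overline A^1$. On the other hand $X\setminus\fint D=\overline{X\setminus D}^1$, and since $u=0$ on $X\setminus D$, fine continuity gives $u=0$ $1$-quasieverywhere on $\overline{X\setminus D}^1=X\setminus\fint D$. Comparing, $u$ would have to equal both $0$ and $1$ $1$-quasieverywhere on $\overline A^1\setminus\fint D=\overline A^1\cap(X\setminus\fint D)$, which is impossible unless that set has $1$-capacity zero. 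Finally, $\rcapa_1(A,D)=\rcapa_1(\overline A^1\cap\fint D,\fint D)$ follows by combining: by monotonicity in the second argument $\rcapa_1(\overline A^1\cap\fint D,\fint D)\ge\rcapa_1(\overline A^1\cap\fint D,D)$; and since $\capa_1(\overline A^1\setminus\fint D)=0$ we have $\rcapa_1(\overline A^1\cap D,D)=\rcapa_1(\overline A^1\cap\fint D,D)$ (the difference set being $1$-capacity negligible and thus irrelevant to admissibility), which equals $\rcapa_1(A,D)$ by the second part. For the reverse, any $u$ admissible for $\rcapa_1(\overline A^1\cap\fint D,\fint D)$ has $u=0$ outside $\fint D\subset D$ and $u\ge 1$ on $\overline A^1\cap\fint D\supset A\cap\fint D$, and since $\capa_1(A\setminus\fint D)\le\capa_1(\overline A^1\setminus\fint D)=0$ we may raise $u$ to be $\ge 1$ on all of $A$ without changing $\int_X g_u\,d\mu$, making it admissible for $\rcapa_1(A,D)$. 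Letting the competitors be near-optimal on both sides closes the chain of equalities.
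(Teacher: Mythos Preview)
Your approach is exactly what the paper does: it simply cites \cite[Lemma 4.8]{BBL-WC} and says the proof is verbatim the same, with Theorem~\ref{thm:fine continuity of Newtonian functions} replacing the $p>1$ fine-continuity result. So your strategy of pushing everything through fine continuity of $N^{1,1}$ functions is the intended one. (Lemma~\ref{lem:capacity of fine closure preliminary result}, which you list in your plan, is not actually needed here; it was used earlier to prove the characterization $\overline A^1=A\cup b_1 A$, which you do use implicitly.)

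Two issues, one cosmetic and one genuine. First, the claim that $\{u>1-\delta\}$ is ``indeed open'' is wrong: an $N^{1,1}$ function need not be lower semicontinuous. You don't need this, nor the parameter $\delta$ at all. The clean argument is: if $y\in\overline A^1\setminus\widetilde N$ and $y\notin A$, then $y\in b_1A$ by Corollary~\ref{cor:characterization of fine boundary}; fine continuity of $u$ at $y$ means $\{u<1\}$ is $1$-thin at $y$ whenever $u(y)<1$, but $A\subset\{u\ge 1\}$ would then make $A$ $1$-thin at $y$, contradicting $y\in b_1A$. Hence $u\ge 1$ on $\overline A^1$ $1$-quasieverywhere, directly.

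Second, and more seriously, in your final paragraph both chains prove the \emph{same} inequality $\rcapa_1(A,D)\le\rcapa_1(\overline A^1\cap\fint D,\fint D)$. Your first chain gives $\rcapa_1(\overline A^1\cap\fint D,\fint D)\ge\rcapa_1(A,D)$ via monotonicity and the null difference; your ``reverse'' takes $u$ admissible for $\rcapa_1(\overline A^1\cap\fint D,\fint D)$ and shows it is admissible for $\rcapa_1(A,D)$, which again yields $\rcapa_1(A,D)\le\rcapa_1(\overline A^1\cap\fint D,\fint D)$. The missing direction is $\rcapa_1(\overline A^1\cap\fint D,\fint D)\le\rcapa_1(A,D)$: take $u$ admissible for $\rcapa_1(A,D)$; you already know $u\ge 1$ $1$-q.e.\ on $\overline A^1$ and (from your argument for $\capa_1(\overline A^1\setminus\fint D)=0$) that $u=0$ $1$-q.e.\ on $X\setminus\fint D$; modify on a capacity-zero set to make $u=0$ everywhere on $X\setminus\fint D$, which makes $u$ admissible for $\rcapa_1(\overline A^1\cap\fint D,\fint D)$ without changing $\int_X g_u\,d\mu$. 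All the ingredients are in your write-up; you just assembled the last step in the wrong direction.
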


\section{Necessity of $\mathcal H(\partial^1 I_E)<\infty$}\label{sec:necessity}

In this section we consider the quasicontinuity and fine continuity properties of $\BV$
functions and in particular sets of finite perimeter, and show that every set of finite perimeter $E$ satisfies the condition $\mathcal H(\partial^1 I_E)<\infty$.

The following quasicontinuity-type result is essentially given by \cite[Theorem 1.1]{LaSh},
and later proved in precisely the given form in the below reference.
Recall the definitions of the lower and upper approximate limits $u^{\wedge}$
and $u^{\vee}$ from
\eqref{eq:lower approximate limit} and \eqref{eq:upper approximate limit}.

\begin{theorem}[{\cite[Corollary 4.3]{L2}}]\label{thm:quasicontinuity for sets of finite perimeter}
Let $\Omega\subset X$ be an open set, let $E\subset X$ be a $\mu$-measurable set with
$P(E,\Omega)<\infty$, and let $\eps>0$. Then there exists an open set $G\subset \Omega$ with $\capa_1(G)<\eps$ such that if $y_k\to x$ 
with $y_k,x\in \Omega\setminus G$, then 
\[
\min\{|\ch_{E}^{\wedge}(y_k)-\ch_E^{\wedge}(x)|,\,|\ch_E^{\wedge}(y_k)-\ch_E^{\vee}(x)|\}\to 0
\]
and
\[
\min\{|\ch_E^{\vee}(y_k)-\ch_E^{\wedge}(x)|,\,|\ch_E^{\vee}(y_k)-\ch_E^{\vee}(x)|\}\to 0.
\]
\end{theorem}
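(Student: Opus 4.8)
The plan is to deduce this from the more general quasicontinuity result for $\BV$ functions, applied to $u=\ch_E$. First I would recall that a $\BV$ function $u$ with $\Vert Du\Vert(\Omega)<\infty$ admits, up to a set of arbitrarily small $1$-capacity, a representative that is quasicontinuous in a suitable sense. For characteristic functions, the two relevant pointwise representatives are $\ch_E^{\wedge}$ and $\ch_E^{\vee}$, which take only the values $0$ and $1$: $\ch_E^{\wedge}=\ch_E^{\vee}=1$ on $I_E$, $\ch_E^{\wedge}=\ch_E^{\vee}=0$ on $O_E$, and $\ch_E^{\wedge}=0$, $\ch_E^{\vee}=1$ on $\partial^*E$. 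So the content of the statement is really that, outside a small open set $G$, one cannot have a sequence $y_k\to x$ along which the value of $E$ (in the measure-theoretic sense) jumps in an uncontrolled way: the only obstruction allowed is that $x$ itself lies on $\partial^*E$, in which case approaching with $\ch_E^{\wedge}(y_k)$ may converge to either $0=\ch_E^{\wedge}(x)$ or $1=\ch_E^{\vee}(x)$.

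The key steps I would carry out are: (1) invoke \cite[Theorem 1.1]{LaSh}, which gives quasicontinuity of $\BV$ functions in the metric setting — more precisely, for each $\eps>0$ an open set $G$ with $\capa_1(G)<\eps$ outside of which the relevant representative behaves continuously in the appropriate two-sided sense; (2) specialize to $u=\ch_E$, noting $P(E,\Omega)=\Vert D\ch_E\Vert(\Omega)<\infty$ so the hypothesis applies; (3) translate the abstract continuity statement into the concrete ``$\min$'' form by observing that for a $\BV$ function the jump set is controlled and, at points outside $G$, any sequential limit of $u^{\wedge}(y_k)$ must coincide with one of $u^{\wedge}(x)$, $u^{\vee}(x)$, and similarly for $u^{\vee}(y_k)$ — this is exactly the assertion that the pair $(u^{\wedge},u^{\vee})$ is continuous into $\overline{\R}^2$ modulo the ``swap'' ambiguity on the jump set; (4) record that for $\ch_E$ the two approximate limits take values in $\{0,1\}$, so the convergence statements reduce to the displayed formulas. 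Since $G\subset\Omega$ can be arranged (intersect with $\Omega$ if necessary, using that $\capa_1$ is monotone), this yields the claim.

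The main obstacle — and the reason one cites \cite{L2} rather than just \cite{LaSh} — is that \cite[Theorem 1.1]{LaSh} is stated in a slightly different formulation, and one needs the sharp two-sided version in which both $u^{\wedge}$ and $u^{\vee}$ are controlled simultaneously, with the $\min$ accounting for the fact that near a jump point the lower and upper limits can be approached from either side. Getting this precise form requires the refinement carried out in \cite{L2}; I would simply quote \cite[Corollary 4.3]{L2} directly, since reproving it would involve the full Fleming–Rishel / boxing-inequality machinery underlying the quasicontinuity of $\BV$ functions, which is outside the scope here. Thus the ``proof'' is essentially a citation together with the elementary observation that for $u=\ch_E$ the approximate limits are $\{0,1\}$-valued.
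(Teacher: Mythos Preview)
Your proposal is correct and matches the paper's treatment: the paper does not prove this statement but simply quotes it as \cite[Corollary 4.3]{L2}, exactly as you suggest doing. Your additional remarks about how the general $\BV$ quasicontinuity result specializes to $u=\ch_E$ via the $\{0,1\}$-valuedness of $\ch_E^{\wedge}$ and $\ch_E^{\vee}$ are accurate context, but the paper itself offers no argument beyond the citation.
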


Recall that a set $A\subset X$ is \emph{$1$-quasiopen} if for every $\eps>0$ there is an
open set $G\subset X$ with $\capa_1(G)<\eps$ such that $A\cup G$ is open.

\begin{proposition}\label{prop:set of finite perimeter is quasiopen}
Let $\Omega\subset X$ be an open set and let $E\subset X$ be a $\mu$-measurable set with
$P(E,\Omega)<\infty$. Then the sets $I_E\cap\Omega$ and $O_E\cap\Omega$ are $1$-quasiopen.
\end{proposition}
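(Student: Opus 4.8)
The goal is to show that $I_E \cap \Omega$ is $1$-quasiopen (the case of $O_E = I_{X\setminus E}$ being symmetric, since $P(X\setminus E,\Omega) = P(E,\Omega)$). The natural strategy is to apply the quasicontinuity result Theorem~\ref{thm:quasicontinuity for sets of finite perimeter} to produce, for given $\eps>0$, an open set $G\subset\Omega$ with $\capa_1(G)<\eps$ outside of which the representatives $\ch_E^\wedge$ and $\ch_E^\vee$ behave continuously in the weak sense stated there, and then argue that $(I_E\cap\Omega)\cup G$ is open. Recall that $x\in I_E$ precisely when $\ch_E^\wedge(x)=\ch_E^\vee(x)=1$, while $x\in O_E$ precisely when $\ch_E^\wedge(x)=\ch_E^\vee(x)=0$, and $x\in\partial^*E$ when $\ch_E^\wedge(x)=0$, $\ch_E^\vee(x)=1$; since $\ch_E^\wedge\le\ch_E^\vee$ and both take values in $\{0,1\}$, these three cases exhaust $\Omega$.

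First I would fix $\eps>0$ and take the open set $G\subset\Omega$ from Theorem~\ref{thm:quasicontinuity for sets of finite perimeter}, so that $\capa_1(G)<\eps$. I claim $U:=(I_E\cap\Omega)\cup G$ is open. Since $G$ is open and $\Omega$ is open, it suffices to check that each point $x\in I_E\cap\Omega\setminus G$ is an interior point of $U$; equivalently (using that $G$ is open), that $x$ has a neighborhood $V\subset\Omega$ with $V\setminus G\subset I_E$. Suppose not: then there is a sequence $y_k\to x$ with $y_k\in\Omega\setminus G$ and $y_k\notin I_E$, i.e. $y_k\in\partial^*E\cup O_E$, so $\ch_E^\wedge(y_k)=0$ for every $k$. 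But $x\in I_E$ means $\ch_E^\wedge(x)=\ch_E^\vee(x)=1$, so $\min\{|\ch_E^\wedge(y_k)-\ch_E^\wedge(x)|,|\ch_E^\wedge(y_k)-\ch_E^\vee(x)|\}=\min\{1,1\}=1$, which does not tend to $0$, contradicting the first conclusion of Theorem~\ref{thm:quasicontinuity for sets of finite perimeter}. Hence no such sequence exists, $x$ is interior to $U$, and $U$ is open. This shows $I_E\cap\Omega$ is $1$-quasiopen, and applying the same argument to $X\setminus E$ (noting $I_{X\setminus E}=O_E$ and that the statement is symmetric in $\wedge/\vee$) gives the claim for $O_E\cap\Omega$.

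**The main obstacle.** The substantive content is entirely packaged into Theorem~\ref{thm:quasicontinuity for sets of finite perimeter}; given that, the argument above is essentially a topological unwinding. The one point that requires a little care is the direction in which quasicontinuity is used: the theorem gives a ``$\min$'' of two quantities tending to zero, so a point in $O_E$ or $\partial^*E$ near $x\in I_E$ is only excluded because \emph{both} alternatives $|\ch_E^\wedge(y_k)-1|$ and (for the $\vee$-version near $O_E$) $|\ch_E^\vee(y_k)-0|$ fail to be small — so I should be careful to use whichever of the two displayed conclusions gives a genuine contradiction in each case, and to double check that for the $O_E$ claim the roles of $\ch_E^\wedge$ and $\ch_E^\vee$ are swapped appropriately. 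There is no serious analytic difficulty; the only risk is a sign/index slip in matching the $\wedge$/$\vee$ bookkeeping to the definitions of $I_E$ and $O_E$.
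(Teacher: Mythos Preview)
Your proposal is correct and follows essentially the same approach as the paper: apply Theorem~\ref{thm:quasicontinuity for sets of finite perimeter} to obtain $G$, then verify that $(I_E\cap\Omega)\cup G$ is open by checking that no sequence $y_k\in\Omega\setminus G$ with $y_k\notin I_E$ can converge to a point $x\in (I_E\cap\Omega)\setminus G$, and finish by symmetry for $O_E$. The paper states the neighborhood existence directly rather than via contradiction, but the logic is identical.
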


\begin{proof}
Let $\eps>0$.
Let $G\subset \Omega$ be an open set obtained by applying 
Theorem \ref{thm:quasicontinuity for sets of finite perimeter}, so that $\capa_1(G)<\eps$.
Let $V:=(I_E\cap\Omega)\cup G$.
To check that $V$ is open, note first that if $x\in G$, then of course $B(x,r)\subset G\subset V$ for some $r>0$.
If $x\in (I_E\cap\Omega)\setminus G$, then by Theorem \ref{thm:quasicontinuity for sets of finite perimeter} there exists $r>0$ with $B(x,r)\setminus G\subset (I_E\cap\Omega)\setminus G$, and thus
\[
B(x,r)\subset (I_E\cap\Omega)\cup G=V.
\]
Thus $V$ is open.

Since $P(X\setminus E,\Omega)=P(E,\Omega)$ and $O_E=I_{X\setminus E}$, also $O_E\cap\Omega$ is $1$-quasiopen.
\end{proof}

The following fact and its proof are essentially the same as in the case $p>1$,
see \cite[Theorem 1.4]{BBL-CCK}.

\begin{proposition}\label{prop:quasiopen is finely open}
Every $1$-quasiopen set $A\subset X$ is the union of a $1$-finely open set and a $\mathcal H$-negligible set.
\end{proposition}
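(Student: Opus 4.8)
The plan is to show that a $1$-quasiopen set $A$ agrees with its $1$-fine interior $\fint A$ up to a set of codimension-one Hausdorff measure zero, and then invoke \eqref{eq:null sets of Hausdorff measure and capacity} to pass between $\capa_1$-null and $\mathcal H$-null sets. More precisely, I would prove that $A\setminus \fint A$ has $1$-capacity zero; then by \eqref{eq:null sets of Hausdorff measure and capacity} it is $\mathcal H$-negligible, and $A=\fint A\cup(A\setminus\fint A)$ exhibits $A$ as the union of the $1$-finely open set $\fint A$ and an $\mathcal H$-negligible set.

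To control $A\setminus\fint A$, first I would use Proposition \ref{prop:characterization of fine interior}, which gives $\fint A=A\setminus b_1(X\setminus A)$, so that $A\setminus\fint A=A\cap b_1(X\setminus A)$. Hence it suffices to show $\capa_1(A\cap b_1(X\setminus A))=0$. Now fix $\eps>0$ and use $1$-quasiopenness to pick an open set $G$ with $\capa_1(G)<\eps$ and $A\cup G$ open. Then $X\setminus(A\cup G)$ is closed, hence its own $1$-fine closure, so $b_1(X\setminus(A\cup G))\subset X\setminus(A\cup G)\subset X\setminus A$; on the other hand $X\setminus A\subset (X\setminus(A\cup G))\cup G$, and by countable subadditivity and monotonicity of the base (which follows from subadditivity of $\rcapa_1$ in its first argument) one gets $b_1(X\setminus A)\subset b_1(X\setminus(A\cup G))\cup b_1 G$. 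Intersecting with $A$ kills the first term since $b_1(X\setminus(A\cup G))\subset X\setminus A$, leaving $A\cap b_1(X\setminus A)\subset b_1 G\subset \overline{G}^1$. By Proposition \ref{prop:capacity of fine closure}, $\capa_1(\overline{G}^1)=\capa_1(G)<\eps$, so $\capa_1(A\cap b_1(X\setminus A))<\eps$. Letting $\eps\to 0$ gives $\capa_1(A\cap b_1(X\setminus A))=0$, as desired.

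The main obstacle I anticipate is the subadditivity-type step $b_1(X\setminus A)\subset b_1(X\setminus(A\cup G))\cup b_1 G$: one must check that being in the $1$-base is ``countably subadditive'' in the appropriate sense, i.e. if a point $x$ lies in $b_1(U_1\cup U_2)$ then it lies in $b_1 U_1\cup b_1 U_2$. This follows because $\rcapa_1(B(x,r)\cap(U_1\cup U_2),B(x,2r))\le \rcapa_1(B(x,r)\cap U_1,B(x,2r))+\rcapa_1(B(x,r)\cap U_2,B(x,2r))$, so if both $U_1$ and $U_2$ are $1$-thin at $x$ then so is their union; contrapositively, $x\in b_1(U_1\cup U_2)$ forces $x\in b_1 U_1\cup b_1 U_2$. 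A secondary point to handle cleanly is that $X\setminus(A\cup G)$ being closed is indeed $1$-finely closed (the topological closure is always $1$-finely closed since the $1$-fine topology is finer than the metric topology, or directly: a closed set contains its metric-boundary limits and hence its $1$-base), so that $b_1(X\setminus(A\cup G))\subset \overline{X\setminus(A\cup G)}^1=X\setminus(A\cup G)$. With these two observations in place the argument is short.
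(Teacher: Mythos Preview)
Your argument is correct. Both proofs hinge on Proposition~\ref{prop:capacity of fine closure} (that $\capa_1(\overline{G}^1)=\capa_1(G)$) together with the quasiopenness of $A$, but the routes differ. The paper takes a sequence of open sets $G_j$ with $\capa_1(G_j)<2^{-j}$ and $A\cup G_j$ open, sets $D:=A\cap\bigcap_j\overline{G_j}^1$, and observes directly that each $A\setminus\overline{G_j}^1=(A\cup G_j)\setminus\overline{G_j}^1$ is the intersection of an open set with a $1$-finely open set, hence $1$-finely open; then $V:=\bigcup_j(A\setminus\overline{G_j}^1)=A\setminus D$ is $1$-finely open and $\capa_1(D)=0$. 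Your approach instead invokes Proposition~\ref{prop:characterization of fine interior} and the easy subadditivity $b_1(U_1\cup U_2)\subset b_1 U_1\cup b_1 U_2$ to show that $A\setminus\fint A=A\cap b_1(X\setminus A)\subset\overline{G}^1$ for a single $G$, and then let $\eps\to 0$. Your version has the minor advantage of identifying the $1$-finely open piece explicitly as $\fint A$ (the largest possible choice), whereas the paper's construction yields some $V\subset\fint A$; the paper's version is in turn slightly more self-contained, not needing the characterization of the fine interior or the base-subadditivity lemma. Both are short and essentially equivalent in depth.
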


\begin{proof}
Take open sets $G_j\subset X$ with $\capa_1(G_j)<2^{-j}$, $j\in\N$,
such that each $A\cup G_j$ is an open set.
 By Proposition \ref{prop:capacity of fine closure} we have $\capa_1(\overline{G_j}^1)=\capa_1(G_j)<2^{-j}$. Let $D:=A\cap \bigcap_{j\in\N}\overline{G_j}^1$, so that $\capa_1(D)=0$, and then also $\mathcal H(D)=0$ by \eqref{eq:null sets of Hausdorff measure and capacity}. Then for each $j\in\N$,
$A\setminus \overline{G_j}^1=A\cup G_j\setminus \overline{G_j}^1$
is a $1$-finely open set, since it is the intersection of an open set and a $1$-finely open set. The set
\[
V:=\bigcup_{j\in\N} (A\setminus \overline{G_j}^1)=A\setminus D
\]
is a $1$-finely open set, since it is the union of $1$-finely open sets, and $A=V\cup D$.
\end{proof}

\begin{proposition}\label{prop:sets of finite perimeter are finely open}
Let $\Omega\subset X$ be an open set and let $E\subset X$ be a $\mu$-measurable set with
$P(E,\Omega)<\infty$. Then each of the sets $I_E\cap\Omega$ and $O_E\cap\Omega$ is the union of a $1$-finely open set and a $\mathcal H$-negligible set.
\end{proposition}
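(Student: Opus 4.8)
The plan is to simply chain together the two propositions that immediately precede this statement. First I would invoke Proposition \ref{prop:set of finite perimeter is quasiopen}, which already establishes that, under the hypothesis $P(E,\Omega)<\infty$, both $I_E\cap\Omega$ and $O_E\cap\Omega$ are $1$-quasiopen. Then I would apply Proposition \ref{prop:quasiopen is finely open} to each of these two sets separately: that proposition asserts that any $1$-quasiopen subset of $X$ is the union of a $1$-finely open set and an $\mathcal H$-negligible set, which is precisely the conclusion we want. So the proof is a two-line deduction.

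The only minor point worth making explicit is that the decomposition is applied once to $A=I_E\cap\Omega$ and once to $A=O_E\cap\Omega$; for the latter one uses, as in the proof of Proposition \ref{prop:set of finite perimeter is quasiopen}, the identities $O_E=I_{X\setminus E}$ and $P(X\setminus E,\Omega)=P(E,\Omega)$ so that Proposition \ref{prop:set of finite perimeter is quasiopen} indeed applies to $O_E\cap\Omega$ as well.

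There is no real obstacle here: the substantive content has already been carried out upstream — Theorem \ref{thm:quasicontinuity for sets of finite perimeter} provides the quasicontinuity behind the quasiopenness, while Proposition \ref{prop:capacity of fine closure} together with the equivalence \eqref{eq:null sets of Hausdorff measure and capacity} is what converts "small capacity" into "$\mathcal H$-negligible" inside Proposition \ref{prop:quasiopen is finely open}. Thus the present proposition is essentially a corollary packaging those facts, and I would write the proof accordingly, without any new estimates.
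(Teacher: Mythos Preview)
Your proposal is correct and matches the paper's proof exactly: the paper's argument is the single line ``Combine Propositions \ref{prop:set of finite perimeter is quasiopen} and \ref{prop:quasiopen is finely open}.'' Your additional remarks about $O_E=I_{X\setminus E}$ are already absorbed into Proposition \ref{prop:set of finite perimeter is quasiopen} itself, so they are not needed here, but they do no harm.
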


\begin{proof}
Combine Propositions \ref{prop:set of finite perimeter is quasiopen}
and \ref{prop:quasiopen is finely open}.
\end{proof}

Recall the characterizations of the $1$-fine closure and the $1$-fine boundary given in Corollary \ref{cor:characterization of fine boundary}.
When we consider measure theoretic interiors, things are simplified somewhat further.

\begin{lemma}\label{lem:fine closure of measure theoretic interior}
For any $\mu$-measurable set $E\subset X$, we have $\overline{I_E}^1=b_1 I_E$ and $\overline{X\setminus I_E}^1=b_1 (X\setminus I_E)$.
\end{lemma}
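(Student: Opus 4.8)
The plan is to use the characterization $\overline{A}^1 = A \cup b_1 A$ from Corollary~\ref{cor:characterization of fine boundary}, which reduces both claims to showing that the ``measure theoretic'' part is already contained in the base. That is, to prove $\overline{I_E}^1 = b_1 I_E$ it suffices to show $I_E \subset b_1 I_E$, and to prove $\overline{X\setminus I_E}^1 = b_1(X\setminus I_E)$ it suffices to show $X\setminus I_E \subset b_1(X\setminus I_E)$. The first of these is immediate from Lemma~\ref{lem:inclusion for fine boundaries}, which gives $I_A \cup \partial^*A \subset b_1 A$ for any $A$; applying it with $A = I_E$ and recalling $I_{I_E} = I_E$ yields $I_E \subset b_1 I_E$, hence $\overline{I_E}^1 = I_E \cup b_1 I_E = b_1 I_E$.

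For the second claim I would argue as follows. Write $X\setminus I_E = O_E \cup \partial^*E$, using the partition of $X$ into $I_E$, $\partial^*E$, $O_E$. Now $O_E = I_{X\setminus E}$, so Lemma~\ref{lem:inclusion for fine boundaries} applied with $A = X\setminus E$ (noting $I_{X\setminus E} = I_{I_{X\setminus E}}$ and $\partial^*(X\setminus E) = \partial^*E$) gives $O_E \cup \partial^*E \subset b_1(I_{X\setminus E}) = b_1(O_E)$. So it remains to pass from $b_1(O_E)$ to $b_1(X\setminus I_E)$: since $O_E \subset X\setminus I_E$, monotonicity of the variational capacity gives $b_1(O_E) \subset b_1(X\setminus I_E)$. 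Combining, $X\setminus I_E = O_E \cup \partial^*E \subset b_1(O_E) \subset b_1(X\setminus I_E)$, and therefore $\overline{X\setminus I_E}^1 = (X\setminus I_E) \cup b_1(X\setminus I_E) = b_1(X\setminus I_E)$.

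I do not expect any serious obstacle here: the statement is essentially a bookkeeping consequence of Lemma~\ref{lem:inclusion for fine boundaries} and Corollary~\ref{cor:characterization of fine boundary}, together with the elementary facts $I_{I_E} = I_E$, $I_{X\setminus E} = O_E$, and the $I_E/\partial^*E/O_E$ partition. The only point requiring a moment's care is checking that $b_1$ is monotone under inclusion — this follows since $\rcapa_1(B(x,r)\cap A', B(x,2r)) \le \rcapa_1(B(x,r)\cap A, B(x,2r))$ whenever $A' \subset A$, so $1$-thinness of $A$ at a point forces $1$-thinness of $A'$ there, i.e. the complement of the base (the set of points of $1$-thinness) only grows as the set shrinks. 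With that observation the two displayed equalities drop out immediately.
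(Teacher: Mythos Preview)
Your strategy matches the paper's: both claims reduce via Corollary~\ref{cor:characterization of fine boundary} to showing the set is contained in its own $1$-base, and Lemma~\ref{lem:inclusion for fine boundaries} supplies this. The first claim is handled identically.

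For the second claim there is a slip. Applying Lemma~\ref{lem:inclusion for fine boundaries} with $A = X\setminus E$ yields $I_A \cup \partial^*A \subset b_1 A$, i.e.\ $O_E \cup \partial^*E \subset b_1(X\setminus E)$, not $b_1(I_{X\setminus E})$: the conclusion of the lemma is $b_1 A$, never $b_1(I_A)$. Your parenthetical $I_{X\setminus E} = I_{I_{X\setminus E}}$ suggests you really intended $A = O_E$; with that choice the lemma does give $O_E \cup \partial^*E \subset b_1 O_E$, and then your monotonicity step $b_1 O_E \subset b_1(X\setminus I_E)$ finishes correctly.

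The paper avoids this detour by applying Lemma~\ref{lem:inclusion for fine boundaries} directly with $A = X\setminus I_E$: since $I_{X\setminus I_E} = O_E$ and $\partial^*(X\setminus I_E) = \partial^*E$ (both because $\mu(E\Delta I_E)=0$), one gets $O_E \cup \partial^*E \subset b_1(X\setminus I_E)$ in a single step, with no need for monotonicity of $b_1$.
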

\begin{proof}
Applying Corollary \ref{cor:characterization of fine boundary}, and the first claim of Lemma \ref{lem:inclusion for fine boundaries} with $A=I_E$, and noting that $I_A=I_E$, we obtain
\[
\overline{I_E}^1=I_E\cup b_1 I_E=b_1 I_E.
\]
Similarly, by applying  Corollary \ref{cor:characterization of fine boundary},
\[
\overline{X\setminus I_E}^1=(X\setminus I_E)\cup b_1(X\setminus I_E)
=O_E\cup \partial^*E\cup b_1(X\setminus I_E)=b_1 (X\setminus I_E),
\]
where the last equality follows from the first claim of Lemma \ref{lem:inclusion for fine boundaries} with $A=X\setminus I_E$, by noting that $I_A=O_E$ and $\partial^*A=\partial^*E$.
\end{proof}

\begin{lemma}\label{lem:inclusion for fine boundaries 2}
For any $\mu$-measurable set $E\subset X$, we have
\[
\partial^1 I_E=b_1 I_E\cap b_1 (X\setminus I_E).
\]
\end{lemma}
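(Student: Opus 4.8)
The plan is to derive this identity directly from the characterization of the $1$-fine boundary in Corollary \ref{cor:characterization of fine boundary}, specialized to $A=I_E$, and then simplify each of the two pieces using Lemma \ref{lem:inclusion for fine boundaries}. Writing $A=I_E$, Corollary \ref{cor:characterization of fine boundary} gives
\[
\partial^1 I_E=(I_E\cap b_1(X\setminus I_E))\cup((X\setminus I_E)\cap b_1 I_E),
\]
so it suffices to show that each of these two sets equals (or is contained in, and together they cover) $b_1 I_E\cap b_1(X\setminus I_E)$.

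For the first piece, note that $I_E\subset b_1 I_E$ by the first claim of Lemma \ref{lem:inclusion for fine boundaries} (applied with $A=I_E$, using $I_{I_E}=I_E$), so $I_E\cap b_1(X\setminus I_E)\subset b_1 I_E\cap b_1(X\setminus I_E)$. For the second piece, observe that $X\setminus I_E=O_E\cup\partial^*E$, and by the first claim of Lemma \ref{lem:inclusion for fine boundaries} applied with $A=X\setminus I_E$ (using $I_{X\setminus I_E}=O_E$ and $\partial^*(X\setminus I_E)=\partial^*E$) we get $X\setminus I_E=O_E\cup\partial^*E\subset b_1(X\setminus I_E)$. Hence $(X\setminus I_E)\cap b_1 I_E\subset b_1 I_E\cap b_1(X\setminus I_E)$. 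Combining the two inclusions yields $\partial^1 I_E\subset b_1 I_E\cap b_1(X\setminus I_E)$.

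For the reverse inclusion, take $x\in b_1 I_E\cap b_1(X\setminus I_E)$. The space is partitioned as $X=I_E\sqcup O_E\sqcup\partial^*E$, equivalently $X=I_E\sqcup(X\setminus I_E)$. If $x\in I_E$, then since $x\in b_1(X\setminus I_E)$ we have $x\in I_E\cap b_1(X\setminus I_E)\subset\partial^1 I_E$. If instead $x\in X\setminus I_E$, then since $x\in b_1 I_E$ we have $x\in(X\setminus I_E)\cap b_1 I_E\subset\partial^1 I_E$. Either way $x\in\partial^1 I_E$, which completes the proof.

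I do not anticipate a genuine obstacle here: the statement is essentially a bookkeeping consequence of Corollary \ref{cor:characterization of fine boundary} together with the two halves of Lemma \ref{lem:inclusion for fine boundaries}. The only point requiring a little care is making sure the density identities $I_{I_E}=I_E$, $I_{X\setminus I_E}=O_E$, and $\partial^*(X\setminus I_E)=\partial^*E$ are invoked correctly so that Lemma \ref{lem:inclusion for fine boundaries} applies verbatim to the sets $I_E$ and $X\setminus I_E$; these all follow from the Lebesgue differentiation theorem as recorded in Section \ref{sec:prelis}. An alternative, even shorter route would be to start from Lemma \ref{lem:fine closure of measure theoretic interior}, which already gives $\overline{I_E}^1=b_1 I_E$ and $\overline{X\setminus I_E}^1=b_1(X\setminus I_E)$, and then write $\partial^1 I_E=\overline{I_E}^1\setminus\fint I_E=\overline{I_E}^1\cap\overline{X\setminus I_E}^1=b_1 I_E\cap b_1(X\setminus I_E)$, using $\fint I_E=X\setminus\overline{X\setminus I_E}^1$; I would likely present it this way since it is the cleanest.
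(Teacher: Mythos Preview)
Your proposal is correct, and the alternative ``shorter route'' you sketch at the end---writing $\partial^1 I_E=\overline{I_E}^1\cap\overline{X\setminus I_E}^1$ and then invoking Lemma~\ref{lem:fine closure of measure theoretic interior} to identify each fine closure with the corresponding $1$-base---is exactly the paper's proof. Your main approach via Corollary~\ref{cor:characterization of fine boundary} and the case split is also valid, just slightly more hands-on; it unpacks the same content that Lemma~\ref{lem:fine closure of measure theoretic interior} already packages.
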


\begin{proof}
By the definition of the $1$-fine boundary and Lemma \ref{lem:fine closure of measure theoretic interior}, we have
\[
\partial^1 I_E=\overline{I_E}^1\cap \overline{X\setminus I_E}^1=b_1 I_E\cap b_1(X\setminus I_E).
\]
\end{proof}

Note that by Lemma \ref{lem:inclusion for fine boundaries}, for any $\mu$-measurable set
$E\subset X$ we have $\partial^*E\subset \partial^1 I_E$.
On the other hand, when $E$ is of finite perimeter, these sets almost coincide.
This is the content of the following theorem that is the main result of this section.

\begin{theorem}\label{thm:finite perimeter implies finite boundary}
Let $\Omega\subset X$ be an open set and let $E\subset X$ be a $\mu$-measurable set with
$P(E,\Omega)<\infty$.
Then $\mathcal H((\partial^1 I_E\setminus \partial^*E)\cap\Omega)=0$, and so in particular $\mathcal H(\partial^1 I_E\cap \Omega)<\infty$.
\end{theorem}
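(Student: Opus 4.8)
The plan is to show $\mathcal H((\partial^1 I_E\setminus\partial^*E)\cap\Omega)=0$; the last assertion then follows from \eqref{eq:def of theta}, since $P(E,\Omega)<\infty$ forces $\mathcal H(\partial^*E\cap\Omega)<\infty$, and
\[
\partial^1 I_E\cap\Omega\subset(\partial^*E\cap\Omega)\cup((\partial^1 I_E\setminus\partial^*E)\cap\Omega).
\]
By Lemma \ref{lem:inclusion for fine boundaries 2}, $\partial^1 I_E=b_1 I_E\cap b_1(X\setminus I_E)$. The strategy is to prove the two one-sided statements
\begin{align}
&\mathcal H\big((b_1 I_E\setminus(I_E\cup\partial^*E))\cap\Omega\big)=0,\label{plan:a}\\
&\mathcal H\big((b_1(X\setminus I_E)\setminus(O_E\cup\partial^*E))\cap\Omega\big)=0,\label{plan:b}
\end{align}
and then intersect: a point of $(\partial^1 I_E\setminus\partial^*E)\cap\Omega$ lies in $b_1 I_E\cap b_1(X\setminus I_E)\cap\Omega$, so by \eqref{plan:a} and \eqref{plan:b} (outside an $\mathcal H$-null set) it lies in $(I_E\cup\partial^*E)\cap(O_E\cup\partial^*E)=\partial^*E$, a contradiction. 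Hence the exceptional set is $\mathcal H$-null.

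To prove \eqref{plan:a}, I would invoke Proposition \ref{prop:sets of finite perimeter are finely open}: $I_E\cap\Omega=V\cup D$ with $V$ $1$-finely open and $\mathcal H(D)=0$. Since $\mathcal H(D)=0$ gives $\capa_1(D)=0$ by \eqref{eq:null sets of Hausdorff measure and capacity}, the base $b_1 D$ satisfies $\mathcal H(b_1 D)=0$ as well (a set of $1$-capacity zero is $1$-thin everywhere, because $\rcapa_1(B(x,r)\cap D,B(x,2r))\le\capa_1(D)=0$; so $b_1 D=\emptyset$ — even cleaner). Thus, modulo the $\mathcal H$-null set $D$ and up to the boundary of $\Omega$, it suffices to understand $b_1 V$ for the $1$-finely open set $V\subset I_E$. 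Now $V$ being $1$-finely open means $X\setminus V$ is $1$-thin at each point of $V$, equivalently $V\subset X\setminus b_1(X\setminus V)$; in particular, at every $x\in V$ the set $V$ is \emph{not} $1$-thin (one uses here that $V$ and $X\setminus V$ cannot both be $1$-thin at $x$, which follows from \eqref{eq:sobolev inequality}/Lemma \ref{lem:inclusion for fine boundaries}-type reasoning, since if both were $1$-thin then $x$ would be in neither $I_V$ nor $O_V$ nor $\partial^*V$, impossible). So $V\subset b_1 V$. The real content is the reverse direction near the $1$-fine boundary: I claim $b_1 I_E\cap\Omega\subset\overline{I_E}^1\cap\Omega$ is contained (up to $\mathcal H$-null sets) in $I_E\cup\partial^*E$. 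For this I would run the density argument of Lemma \ref{lem:fine closure of a set of finite perimeter}: for $\mathcal H$-a.e.\ $x\in\Omega\setminus(I_E\cup\partial^*E)$, combining \eqref{eq:capacity and Hausdorff measure}, Lemma \ref{lem:local boxing inequality}, \eqref{eq:def of theta}, and the density theorem \cite[Theorem 2.4.3]{AT} applied to $\nu=\mathcal H|_{\partial^*E}$, one gets
\[
\limsup_{r\to0}r\,\frac{\rcapa_1(B(x,r)\cap I_E,B(x,2r))}{\mu(B(x,r))}=0,
\]
i.e.\ $I_E$ is $1$-thin at $x$, so $x\notin b_1 I_E$. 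Since $\mathcal H(\partial^*E\cap\Omega)<\infty$, this handles all but an $\mathcal H$-null subset of $\Omega\setminus(I_E\cup\partial^*E)$, giving \eqref{plan:a}. Statement \eqref{plan:b} is proved identically with $E$ replaced by $X\setminus E$, using $O_E=I_{X\setminus E}$, $\partial^*E=\partial^*(X\setminus E)$, $P(X\setminus E,\Omega)=P(E,\Omega)$, and that $X\setminus I_E=O_E\cup\partial^*E$ agrees with $I_{X\setminus E}$ up to the $\mathcal H$-null-perimeter-measure set $\partial^*E$.

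The main obstacle I anticipate is localizing to $\Omega$ cleanly: $b_1$, $\rcapa_1$, Lemma \ref{lem:local boxing inequality}, and the finite-perimeter hypothesis are all only available \emph{inside} $\Omega$, yet $\partial^1 I_E$ is defined using the global $1$-fine topology. The fix is that every estimate above is a statement about $\limsup_{r\to0}$ of a ratio over balls $B(x,r)$ with $r\to0$, so for $x\in\Omega$ all balls $B(x,2r)$ eventually sit inside $\Omega$ and the finite-perimeter hypothesis $P(E,\Omega)<\infty$ together with \eqref{eq:def of theta} on Borel subsets $A\subset\Omega$ suffices; one only needs to be slightly careful that the $\mathcal H$-null sets produced (the sets $N$ in the Lemma \ref{lem:fine closure of a set of finite perimeter}-type argument, and the set $\widetilde N$ of non-$1$-finely-continuous points coming from Theorem \ref{thm:fine continuity of Newtonian functions} via \eqref{eq:null sets of Hausdorff measure and capacity}) are genuinely $\mathcal H$-null and not merely $\mathcal H$-$\sigma$-finite, which they are. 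A secondary point worth double-checking is the elementary fact that $V$ and $X\setminus V$ are not simultaneously $1$-thin at a common point — this is exactly the first claim of Lemma \ref{lem:inclusion for fine boundaries} (that $I_A\cup\partial^*A\subset b_1 A$) applied to both $A=V$ and $A=X\setminus V$, together with the partition $X=I_V\sqcup\partial^*V\sqcup O_V$ — so no new work is needed there.
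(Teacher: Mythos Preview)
Your argument is essentially right and close in spirit to the paper's, but the paper's execution is cleaner. The paper applies Proposition~\ref{prop:sets of finite perimeter are finely open} to \emph{both} $I_E$ and $O_E$: writing $I_E\cap\Omega=A_1\cup N_1$ and $O_E\cap\Omega=A_2\cup N_2$ with $A_i$ $1$-finely open and $\mathcal H(N_i)=0$, every $x\in\Omega\setminus(\partial^*E\cup N_1\cup N_2)$ lies in $A_1\cup A_2$; if $x\in A_1$ then $X\setminus I_E\subset X\setminus A_1$ is $1$-thin at $x$ so $x\notin b_1(X\setminus I_E)$, and if $x\in A_2$ then $I_E\subset X\setminus A_2$ is $1$-thin at $x$ so $x\notin b_1 I_E$; either way $x\notin\partial^1 I_E$ by Lemma~\ref{lem:inclusion for fine boundaries 2}. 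Notice that the decomposition of $I_E$ handles your claim (b) and that of $O_E$ handles your claim (a): your invocation of Proposition~\ref{prop:sets of finite perimeter are finely open} for $I_E$ at the start of (a) is pointed in the wrong direction, which is why you abandon it and fall back on re-running Lemma~\ref{lem:fine closure of a set of finite perimeter}. That fallback is correct, but the digression on $V\subset b_1 V$ contributes nothing to (a). (A minor side remark: the inequality $\rcapa_1(B(x,r)\cap D,B(x,2r))\le\capa_1(D)$ is not valid as stated; to get $b_1 D=\emptyset$ simply use $\mathcal H(D)=0$ together with \eqref{eq:capacity and Hausdorff measure}.)

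There is one genuine slip in the reduction of (b) to (a). You assert that $X\setminus I_E=O_E\cup\partial^*E$ agrees with $I_{X\setminus E}=O_E$ up to ``the $\mathcal H$-null-perimeter-measure set $\partial^*E$'', but $\partial^*E$ is \emph{not} $\mathcal H$-null when $P(E,\Omega)>0$; by \eqref{eq:def of theta} it has finite, typically positive, $\mathcal H$-measure. So one cannot pass from $b_1(X\setminus I_E)$ to $b_1 O_E$ by simply discarding $\partial^*E$. The repair is immediate from ingredients you already cite: the density step (applying \cite[Theorem~2.4.3]{AT} to $\nu=\mathcal H|_{\partial^*E\cap\Omega}$ together with \eqref{eq:capacity and Hausdorff measure}) shows that $\partial^*E$ is $1$-thin at $\mathcal H$-a.e.\ $x\in\Omega\setminus\partial^*E$, so $b_1(\partial^*E)\cap I_E\cap\Omega$ is $\mathcal H$-null; since subadditivity of $\rcapa_1$ gives $b_1(O_E\cup\partial^*E)\subset b_1 O_E\cup b_1(\partial^*E)$, your (b) then follows from (a) applied to $X\setminus E$.
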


\begin{proof}
By Proposition \ref{prop:sets of finite perimeter are finely open} we know that there exist
$1$-finely open sets $A_1,A_2\subset X$ and $\mathcal H$-negligible sets $N_1,N_2\subset X$ 
such that $I_E\cap \Omega=A_1\cup N_1$ and $O_E\cap \Omega=A_2\cup N_2$.
Take $x\in \Omega\setminus (\partial^*E\cup N_1\cup N_2)$. Thus $x\in A_1\cup A_2$. Suppose $x\in A_1$. Then
\[
r\frac{\rcapa_1(B(x,r)\setminus I_E,B(x,2r))}{\mu(B(x,r))}
\le r\frac{\rcapa_1(B(x,r)\setminus A_1,B(x,2r))}{\mu(B(x,r))}
\to 0
\]
as $r\to 0$, since $A_1$ is a $1$-finely open set. Thus $x\notin b_1 (X\setminus I_E)$, and by Lemma \ref{lem:inclusion for fine boundaries 2}, $x\notin \partial^1 I_E$. If $x\in A_2$, similarly
\begin{align*}
r\frac{\rcapa_1(B(x,r)\cap I_E,B(x,2r))}{\mu(B(x,r))}
&\le r\frac{\rcapa_1(B(x,r)\setminus O_E,B(x,2r))}{\mu(B(x,r))}\\
&\le r\frac{\rcapa_1(B(x,r)\setminus A_2,B(x,2r))}{\mu(B(x,r))}
\to 0
\end{align*}
as $r\to 0$, since $A_2$ is a $1$-finely open set. Thus $x\notin b_1 I_E$, and so $x\notin \partial^1 I_E$.
In conclusion, $(\partial^1 I_E\setminus \partial^* E)\cap \Omega\subset N_1\cup N_2$, so that
$\mathcal H((\partial^1 I_E\setminus \partial^*E)\cap\Omega)=0$. By \eqref{eq:def of theta} we know that $\mathcal H(\partial^*E\cap \Omega)<\infty$, so now also $\mathcal H(\partial^1 I_E\cap \Omega)<\infty$.
\end{proof}

Note that we have the following.

\begin{lemma}\label{lem:coincidence of fine boundaries}
For any $\mu$-measurable set $E\subset X$, we have $\partial^1 I_E=\partial^1 O_E$.
\end{lemma}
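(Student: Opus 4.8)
The plan is to prove the identity $\partial^1 I_E = \partial^1 O_E$ by exploiting the characterization from Lemma \ref{lem:inclusion for fine boundaries 2}, namely $\partial^1 I_E = b_1 I_E \cap b_1(X\setminus I_E)$, together with the analogous statement for $O_E$. Since $O_E = I_{X\setminus E}$ and $X\setminus E$ is also $\mu$-measurable (being the complement of a $\mu$-measurable set), Lemma \ref{lem:inclusion for fine boundaries 2} applies verbatim to give $\partial^1 O_E = b_1 O_E \cap b_1(X\setminus O_E)$. So the task reduces to showing
\[
b_1 I_E \cap b_1(X\setminus I_E) = b_1 O_E \cap b_1(X\setminus O_E).
\]

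The key observation is that the $1$-base $b_1$ of a set depends only on its measure-theoretic structure near each point — more precisely, on the relevant variational capacities of balls intersected with the set — and that $X\setminus I_E$ and $O_E$ differ only by $\partial^*E$ while $I_E$ and $X\setminus O_E$ differ only by $\partial^*E$ as well; indeed, the space partitions disjointly as $X = I_E \sqcup \partial^*E \sqcup O_E$, so $X\setminus I_E = O_E\cup\partial^*E$ and $X\setminus O_E = I_E\cup\partial^*E$. The cleanest route is to note that $b_1$ is monotone, and moreover that adjoining to a set a subset on which the set is already $1$-thin at a point does not change thinness at that point. Concretely, I would first record that for any $A\subset X$ and any $x\in X$, if $x\notin b_1 A$ then $x\in I_A\cup\partial^*A \subset b_1 A$ is false — wait, more usefully: since $I_E\subset X\setminus O_E = I_E\cup\partial^*E$ and $O_E\subset X\setminus I_E = O_E\cup\partial^*E$, monotonicity of $b_1$ immediately gives $b_1 I_E\subset b_1(X\setminus O_E)$ and $b_1 O_E\subset b_1(X\setminus I_E)$. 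For the reverse inclusions the point is that $X\setminus O_E$ and $I_E$ differ only by $\partial^*E$, and by Lemma \ref{lem:inclusion for fine boundaries}, $\partial^*E\subset I_E\cup\partial^*E = X\setminus O_E$, hmm. I think the honest approach is: use the subadditivity of capacity to split $B(x,r)\cap(X\setminus O_E) = (B(x,r)\cap I_E)\cup(B(x,r)\cap\partial^*E)$ and observe that $\partial^*E\subset b_1 I_E$ fails to help pointwise.

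So instead the real engine should be the symmetry of $\partial^*E$ under complementation combined with the structural results already proven. Here is the streamlined plan: by Lemma \ref{lem:inclusion for fine boundaries 2} applied to both $E$ and $X\setminus E$, I get $\partial^1 I_E = b_1 I_E\cap b_1(O_E\cup\partial^*E)$ and $\partial^1 O_E = b_1 O_E\cap b_1(I_E\cup\partial^*E)$. Now I claim $b_1 I_E\cap b_1(O_E\cup\partial^*E) = b_1 I_E\cap b_1(X\setminus I_E)$ trivially since $O_E\cup\partial^*E = X\setminus I_E$, and likewise $\partial^1 O_E = b_1 O_E\cap b_1(X\setminus O_E)$. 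Thus what must be shown is $b_1 I_E\cap b_1(X\setminus I_E) = b_1 O_E\cap b_1(X\setminus O_E)$. By monotonicity, since $O_E\subset X\setminus I_E$, we have $b_1 O_E\subset b_1(X\setminus I_E)$; since $I_E\subset X\setminus O_E$, we have $b_1 I_E\subset b_1(X\setminus O_E)$. Hence $b_1 O_E\cap b_1(X\setminus O_E)\supset b_1 O_E\cap b_1 I_E$, and symmetrically $b_1 I_E\cap b_1(X\setminus I_E)\supset b_1 I_E\cap b_1 O_E$, but these lower bounds are not yet equalities. To close the gap, I would show $b_1 I_E\cap b_1(X\setminus I_E)\subset b_1 O_E$: if $x$ lies in the left side but $x\notin b_1 O_E$, then (since $X\setminus I_E = O_E\cup\partial^*E$) thinness of $O_E$ at $x$ plus the fact that $x\in b_1(X\setminus I_E)$ forces $x\in b_1\partial^*E\subset b_1(X\setminus E)$... this requires $\partial^*E$ to be "fat" at $x$, and one uses \eqref{eq:capacity and Hausdorff measure} to relate $\rcapa_1(B(x,r)\cap\partial^*E, B(x,2r))$ to $\mathcal H(B(x,r)\cap\partial^*E)$, but $\partial^*E$ need not have finite $\mathcal H$-measure in general.

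The main obstacle, therefore, is handling the part of $\partial^1 I_E$ sitting over $\partial^*E$ when $E$ is \emph{not} assumed to be of finite perimeter; the clean density estimates of Lemma \ref{lem:fine closure of a set of finite perimeter} are unavailable. I expect the resolution is that one does not actually need them: the statement $\partial^1 I_E = \partial^1 O_E$ should follow purely formally once one proves the symmetric identity $b_1 I_E\cap b_1(X\setminus I_E) = b_1 O_E\cap b_1(X\setminus O_E)$, and this in turn should follow from the observation that adding a subset of $b_1 A$ to a set $B\supset$ that subset does not enlarge $b_1$ beyond $b_1 B$ — wait. The correct formal fact is: for $x\in b_1 I_E\cap b_1(X\setminus I_E)$, since $x\in b_1(X\setminus I_E) = b_1(O_E\cup\partial^*E)$ and (by Lemma \ref{lem:inclusion for fine boundaries} with $A = X\setminus I_E$, noting $I_{X\setminus I_E} = O_E$ and $\partial^*(X\setminus I_E)=\partial^*E$) we have no immediate help, so instead I fall back on: $\partial^1 I_E = \overline{I_E}^1\cap\overline{X\setminus I_E}^1$ and $\partial^1 O_E = \overline{O_E}^1\cap\overline{X\setminus O_E}^1$; using Lemma \ref{lem:fine closure of measure theoretic interior}, $\overline{I_E}^1 = b_1 I_E$, $\overline{O_E}^1 = b_1 O_E$, $\overline{X\setminus I_E}^1 = b_1(X\setminus I_E)$, $\overline{X\setminus O_E}^1 = b_1(X\setminus O_E)$. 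Since $I_{O_E} = O_E$, Lemma \ref{lem:fine closure of measure theoretic interior} applies to $O_E$ in place of $I_E$ (as $O_E = I_{X\setminus E}$), confirming these. Then the final step is the set-algebra identity combined with $b_1 O_E\subset \overline{O_E}^1\subset\overline{X\setminus I_E}^1 = b_1(X\setminus I_E)$ and $b_1 I_E\subset b_1(X\setminus O_E)$, yielding $b_1 I_E\cap b_1(X\setminus I_E)\subset b_1 I_E\cap b_1(X\setminus O_E)$ — and I suspect that with the right bookkeeping of which inclusion goes which way, both intersections collapse to $b_1 I_E\cap b_1 O_E\cap(\text{closure terms})$, giving equality. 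I would write this out carefully, treating the $\partial^*E$ overlap via the monotonicity inclusions $O_E\subset X\setminus I_E$ and $I_E\subset X\setminus O_E$ only, without ever needing finite perimeter.
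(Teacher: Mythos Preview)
Your reduction is correct: by Lemma \ref{lem:inclusion for fine boundaries 2} applied to $E$ and to $X\setminus E$, the task is exactly to prove
\[
b_1 I_E\cap b_1(X\setminus I_E)=b_1 O_E\cap b_1(X\setminus O_E),
\]
and you correctly note the easy monotonicity inclusions $b_1 I_E\subset b_1(X\setminus O_E)$ and $b_1 O_E\subset b_1(X\setminus I_E)$. But this is where your argument stalls. You explicitly identify the missing step --- showing, say, that $x\in b_1 I_E\cap b_1(X\setminus I_E)$ forces $x\in b_1 O_E$ --- and then hope it will follow from ``the right bookkeeping'' using only the monotonicity inclusions. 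It will not. Monotonicity of $b_1$ only ever gives you $b_1(\textrm{smaller})\subset b_1(\textrm{larger})$; no amount of rearranging those inclusions will produce the reverse direction $b_1(X\setminus O_E)\subset b_1 I_E$ that you need, because $X\setminus O_E=I_E\cup\partial^*E$ is genuinely larger than $I_E$ and $\partial^*E$ can have infinite $\mathcal H$-measure.

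The missing ingredient is Proposition \ref{prop:capacity of fine closure}, which says $\rcapa_1(A,D)=\rcapa_1(\overline{A}^1\cap D,D)$: passing to the $1$-fine closure does not increase the variational capacity. Combined with Lemma \ref{lem:inclusion for fine boundaries} (which gives $I_E\cup\partial^*E\subset b_1 I_E$), this is exactly what bridges the gap. The paper's proof runs the chain
\[
\rcapa_1(B(x,r)\cap I_E,B(x,2r))\ge \rcapa_1(B(x,r)\cap b_1 I_E,B(x,2r))\ge \rcapa_1(B(x,r)\setminus O_E,B(x,2r)),
\]
the first inequality coming from Proposition \ref{prop:capacity of fine closure} (together with the local inclusion $B(x,r)\cap b_1 I_E\subset b_1(B(x,r)\cap I_E)\subset\overline{B(x,r)\cap I_E}^1$), the second from $X\setminus O_E\subset b_1 I_E$. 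Hence if $X\setminus O_E$ is $1$-thick at $x$, so is $I_E$; this gives $b_1(X\setminus O_E)\subset b_1 I_E$, and with the trivial reverse inclusion you get $b_1 I_E=b_1(X\setminus O_E)$. By symmetry $b_1 O_E=b_1(X\setminus I_E)$, and the two displayed intersections coincide. Your proposal circles around this but never invokes Proposition \ref{prop:capacity of fine closure}, which is the one nontrivial input the argument requires.
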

\begin{proof}
Fix $x\in \partial^1 O_E$. Then $x\in b_1 O_E$ by Lemma \ref{lem:inclusion for fine boundaries 2}, and thus $x\in b_1 (X\setminus I_E)$.
We need to show that also $x\in b_1 I_E$.
Note that for any $r>0$, $B(x,r)\cap b_1 I_E\subset b_1(B(x,r)\cap I_E)$.
By using Proposition \ref{prop:capacity of fine closure} and the first claim of Lemma \ref{lem:inclusion for fine boundaries} with $A=I_E$ (note that $I_A=I_E$), we obtain that
\begin{align*}
&\limsup_{r\to 0}r \frac{\rcapa_1(B(x,r)\cap I_E,B(x,2r))}{\mu(B(x,r))}\\
&\qquad\qquad \ge \limsup_{r\to 0} r \frac{\rcapa_1(b_1 (B(x,r)\cap I_E),B(x,2r))}{\mu(B(x,r))}\\
&\qquad\qquad \ge \limsup_{r\to 0} r \frac{\rcapa_1(B(x,r)\cap b_1 I_E,B(x,2r))}{\mu(B(x,r))}\\
&\qquad\qquad  \ge \limsup_{r\to 0} r \frac{\rcapa_1(B(x,r)\cap (I_E\cup \partial^*I_E),B(x,2r))}{\mu(B(x,r))}\\
&\qquad\qquad  = \limsup_{r\to 0} r \frac{\rcapa_1(B(x,r)\setminus O_E,B(x,2r))}{\mu(B(x,r))}>0,
\end{align*}
since $x\in b_1 (X\setminus O_E)$ by Lemma \ref{lem:inclusion for fine boundaries 2}. Thus $x\in \partial^1 I_E$.
\end{proof}

\begin{remark}
Despite the above lemma, $\partial^1 I_E$ seems in some way a strange set to consider,
since  we seem to obtain it by first taking an open set in the measure topology, and by then taking the boundary in a different topology, namely the $1$-fine topology.
To clarify the issue somewhat, let us see what happens if we define the measure topology in a more axiomatic way than is used in defining $I_E$, $O_E$, and $\partial^*E$. We say that a set $U\subset X$ is $0$-finely open
if
\[
\lim_{r\to 0}\frac{\mu(B(x,r)\setminus U)}{\mu(B(x,r))}=0
\]
for every $x\in U$, that is, $U\subset I_U$.
Then as in Definition \ref{def:1 fine topology}, for any $G\subset X$ we can
 define the $0$-fine interior $\0fint G$, the $0$-fine closure $\overline{G}^0$, and the $0$-fine boundary $\partial^0 G:=\overline{G}^0\setminus \0fint G$. Moreover, let $b_0 G$ be the set of points where the density of $G$ is \emph{not} zero (somewhat confusingly), i.e. $b_0 G=X\setminus O_G$. Analogously to Proposition \ref{prop:characterization of fine interior} and Corollary \ref{cor:characterization of fine boundary}, we can then show that
\[
\0fint G= G\setminus b_0(X\setminus G),\qquad \overline{G}^0=G\cup b_0 G,
\]
and
\[
\partial^0 G=(G\cap b_0(X\setminus G))\cup ((X\setminus G)\cap b_0 G).
\]
Then as in Lemma \ref{lem:inclusion for fine boundaries 2}, we have at least for any $\mu$-measurable set $E\subset X$
\[
\partial^0 I_E=b_0 I_E\cap b_0 (X\setminus I_E).
\]
But this is exactly $\partial^*I_E=\partial^*E$. Thus the measure theoretic boundary
$\partial^*E$ is the boundary of $I_E$ in the $0$-fine topology
(and not of $E$).
Moreover, $I_E$ is not the same as $\0fint E=E\cap I_E$, so perhaps $I_E$ should be
viewed only as a measure theoretic concept, and not a topological one.
Now the conclusion of Theorem \ref{thm:finite perimeter implies finite boundary} can be reformulated in the more symmetric fashion $\mathcal H(\partial^1 I_E\setminus \partial^0 I_E)=0$.
\end{remark}

\begin{example}
Let $X=\R^2$ (unweighted), and consider the slit disk
\[
E=B(0,1)\setminus \{x=(x_1,x_2):\,x_1>0,\,x_2=0\}.
\]
Note that the $1$-dimensional Hausdorff measure $\mathcal H^1$ is comparable to the codimension one Hausdorff measure $\mathcal H$.
Now $\partial^*E=\partial B(0,1)$ and $P(E,\R^2)=\mathcal H^1(\partial^*E)=2\pi$.
From Corollary \ref{cor:characterization of fine boundary} it follows that
$\partial^1 E=\partial E$,
so that $\partial^1 E\setminus \partial^*E$ consists of the slit, and so $\mathcal H(\partial^1 E\setminus \partial^*E)>0$. Similarly,
$\fint E=E$, and thus also $\mathcal H(\partial^1 \fint E\setminus \partial^*E)>0$.

Thus in Theorem \ref{thm:finite perimeter implies finite boundary}, we cannot replace $I_E$ by either $E$ or $\fint E$.
\end{example}

\begin{example}\label{ex:real line}
Let $X=\R$ (unweighted). For any $x\in \R$, $r>0$, and $A\subset \R$
with $B(x,r)\cap A\neq \emptyset$ we have
\[
\rcapa_1(B(x,r)\cap A,B(x,2r))\ge 2.
\]
Thus we find that
the metric topology and the $1$-fine topology coincide, and so
for any $A\subset \R$ we have $\partial^1 A=\partial A$. For a set of finite perimeter
$E\subset \R$,
the result $\mathcal H(\partial^1 I_E\setminus \partial^*E)=0$ thus implies that
$\partial I_E= \partial^*E$, since $\mathcal H$ is now (comparable to) the counting measure.
Thus $I_E$ is a \emph{good representative} of $E$, a well-known result on the real line, see e.g. \cite[Sections 3.2 \& 3.5]{AFP}.

On the other hand, if we define
\[
E:=\R\setminus \bigcup_{j\in\N} [2^{-j},2^{-j}+ 2^{-2j}],
\]
then clearly $P(E,\R)=\infty$ and $0\in I_E$. If $U\supset I_E$ is an open set, then $U$ contains a neighborhood of the origin and  thus $U\setminus I_E\neq \emptyset$. Thus $\capa_1(U\setminus I_E)\ge 2$, since every point has $1$-capacity $2$. Thus $I_E$
 is not a $1$-quasiopen set.
Moreover, $\mathcal H(\partial^1 I_E\setminus \partial^*E)=\mathcal H(\{0\})>0$, so the
conclusions of Proposition \ref{prop:set of finite perimeter is quasiopen} and Theorem \ref{thm:finite perimeter implies finite boundary} do not
necessarily hold unless $E$ is of finite perimeter. 
\end{example}

Given a set of finite perimeter $E\subset X$, if we denote by $\Sigma_{\gamma}E$ the subset of $\partial^* E$ where \eqref{eq:definition of gamma} holds, we know that $\mathcal H(\partial^*E\setminus \Sigma_{\gamma}E)=0$. Theorem \ref{thm:finite perimeter implies finite boundary} then shows that the difference between $\partial^* E$ and the a priori larger set $\partial^1 I_E$
is also $\mathcal H$-negligible. In conclusion, the boundary of a set of finite perimeter is quite regular in the sense that all of these sets almost coincide.

\section{Sufficiency  of $\mathcal H(\partial^1 I_E)<\infty$}\label{sec:sufficiency}

In this section we prove that the condition $\mathcal H(\partial^1 I_E)<\infty$ is also sufficient for $E$ to be of finite perimeter.

\begin{theorem}\label{thm:sufficiency}
Let $\Omega\subset X$ be an open set, let $E\subset X$ be a $\mu$-measurable set, and assume that $\mathcal H(\partial^1 I_E\cap \Omega)<\infty$. Then $P(E,\Omega)<\infty$.
\end{theorem}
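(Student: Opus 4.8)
The plan is to establish a local estimate of the form $P(E, B) \lesssim \mathcal H(\partial^1 I_E \cap 2B)$ on balls where the measure-theoretic density of $E$ is bounded away from both $0$ and $1$, and then to patch these local estimates together by a covering argument. First I would reduce to a local statement: since $\partial^1 I_E \cap \Omega$ is $\mathcal H$-finite and $X$ is proper, I can cover a compactly contained subset $\Omega' \Subset \Omega$ by finitely many balls and it suffices to bound $P(E, B)$ for a fixed ball $B = B(x,r)$ with $2\lambda B \Subset \Omega$. The crucial observation is that for $\mu$-a.e.\ $x \in X$ either $x \in I_E$ or $x \in O_E$, and on the set $I_E \cap B$ (respectively $O_E \cap B$) the relative isoperimetric inequality \eqref{eq:relative isoperimetric inequality} should let me absorb the perimeter term. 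Concretely, if the density of $E$ in $B(x,r)$ is neither close to $0$ nor close to $1$, then by \eqref{eq:relative isoperimetric inequality} we get a lower bound $P(E, B(x,2\lambda r)) \gtrsim \mu(B(x,r))/r$, which by the density result of \cite[Theorem 2.4.3]{AT} (used as in the proof of Lemma \ref{lem:fine closure of a set of finite perimeter}) forces such points to lie in a set with controlled perimeter density.

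The key step in the other direction is to show that the ``bad'' set --- the set of $x$ at which $E$ has density bounded away from $0$ and $1$ --- is contained, up to an $\mathcal H$-negligible set, in $\partial^1 I_E$. This is essentially the converse of the inclusion $\partial^* E \subset \partial^1 I_E$ from Lemma \ref{lem:inclusion for fine boundaries}, but now I need it quantitatively. If $x \notin \partial^1 I_E = b_1 I_E \cap b_1(X \setminus I_E)$ (using Lemma \ref{lem:inclusion for fine boundaries 2}), then either $I_E$ or $X \setminus I_E$ is $1$-thin at $x$; by the Sobolev-inequality argument in the proof of Lemma \ref{lem:inclusion for fine boundaries}, $1$-thinness of $I_E$ at $x$ forces the upper density of $E$ at $x$ to vanish, i.e.\ $x \in O_E$, and similarly $1$-thinness of $X \setminus I_E = O_E \cup \partial^* E$ forces $x \in I_E$. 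Hence $X \setminus \partial^1 I_E \subset I_E \cup O_E$, and moreover the quantitative decay of the capacity density gives decay of the measure density, so that outside $\partial^1 I_E$ the point $x$ has full density in $E$ or in its complement. Thus the ``bad'' set sits inside $\partial^1 I_E$.

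Combining these, I would argue as follows. Fix a Whitney-type covering of $\Omega$ by balls $B_i = B(x_i, r_i)$ with bounded overlap and $2\lambda B_i \subset \Omega$. On each $B_i$, if the density of $E$ is $\le \tau$ or $\ge 1 - \tau$ for a suitable small $\tau$ (depending only on the structural constants), I discard it --- by the relative isoperimetric inequality such balls contribute perimeter controlled by $\mathcal H$ restricted to the bad set, which lives in $\partial^1 I_E$ and is therefore finite. For the remaining balls one uses the lower bound on $P(E, 2\lambda B_i)$ together with the density theorem to see that the centers $x_i$ accumulate only on $\partial^1 I_E$, whose Hausdorff measure is finite; summing $\mu(B_i)/r_i \lesssim \mathcal H_{r_i}(\partial^1 I_E \cap B_i)$ over a disjointified subfamily and using bounded overlap gives $\sum_i P(E, B(x_i, 2\lambda r_i)) \lesssim \mathcal H(\partial^1 I_E \cap \Omega) < \infty$, hence $P(E, \Omega) < \infty$.

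I expect the main obstacle to be making the covering/patching rigorous: unlike in the proof of necessity, here I cannot simply invoke quasi-openness (Example \ref{ex:real line} shows $I_E$ need not be quasiopen when $P(E,\Omega)=\infty$), so the argument has to be genuinely bootstrapped from the finiteness of $\mathcal H(\partial^1 I_E \cap \Omega)$ alone. In particular I will need a stopping-time or iteration argument on scales to extract from the capacity-density decay a usable covering of $\Omega \setminus \partial^1 I_E$ by balls on each of which $E$ has density close to $0$ or close to $1$, and then glue the corresponding local perimeter bounds via \eqref{eq:sets of finite perimeter form an algebra} and lower semicontinuity. The delicate point is that the radii in this covering are not uniform, so I must track the dependence carefully and use the definition of $\mathcal H$ as a limit of the contents $\mathcal H_R$ to close the estimate.
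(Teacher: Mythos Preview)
Your outline has a circularity at its core. Every quantitative step you propose either treats $P(E,\cdot)$ as a Radon measure or uses the relative isoperimetric inequality \eqref{eq:relative isoperimetric inequality} --- but \eqref{eq:relative isoperimetric inequality} gives a \emph{lower} bound for $P(E,2\lambda B)$, never an upper bound, and the density theorem \cite[Theorem 2.4.3]{AT} you invoke (``as in the proof of Lemma~\ref{lem:fine closure of a set of finite perimeter}'') applies only to a Radon measure, which $P(E,\cdot)$ is on $\Omega$ only after $P(E,\Omega)<\infty$ has been established. Concretely, your summation step ``$\mu(B_i)/r_i \lesssim \mathcal H_{r_i}(\partial^1 I_E \cap B_i)$, hence $\sum_i P(E,B(x_i,2\lambda r_i)) \lesssim \mathcal H(\partial^1 I_E\cap\Omega)$'' is unjustified: the first inequality is asserted without proof, and even granting it, the lower bound $P(E,2\lambda B_i)\gtrsim \mu(B_i)/r_i$ goes the wrong way to bound $\sum_i P(E,2\lambda B_i)$ from above. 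The pointwise classification $X\setminus\partial^1 I_E\subset I_E\cup O_E$ is correct, but it gives no quantitative handle on $P(E,\cdot)$; the ``stopping-time or iteration argument'' you defer to the end is precisely the missing idea.

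The paper's proof avoids this circle by never estimating $P(E,\cdot)$ directly. Instead it shows, for an \emph{arbitrary} $\mu$-measurable $E$ and with no perimeter hypothesis, that $I_E\setminus\partial^1 I_E$ and $O_E\setminus\partial^1 I_E$ are $1$-quasiopen (Corollary~\ref{cor:set minus fine boundary is quasiopen}); this is the substantial analytic input and requires building a theory of $1$-capacitary potentials and a weak Choquet property (Proposition~\ref{prop:Choquet type property}). Quasi-openness then forces $1$-a.e.\ curve joining $I_E$ to $O_E$ to meet $\partial^1 I_E$ (Proposition~\ref{prop:curves and 1 fine boundary}), after which one covers $\partial^1 I_E\cap\Omega$ by small balls and writes down an explicit $N^{1,1}_{\loc}$ function $u$ close to $\ch_E$ whose weak upper gradient $g=\sum_j r_j^{-1}\ch_{2B_j}$ has $\int_\Omega g\,d\mu\le C_d(\mathcal H(\partial^1 I_E\cap\Omega)+\eps)$. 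This bounds $P(E,\Omega)$ directly from the \emph{definition} of total variation, bypassing any need for $P(E,\cdot)$ to behave like a measure beforehand.
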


In proving this result, we will need to study the concept of \emph{capacitary potential}
in the case $p=1$.
Given an open set $U\subset X$ and an arbitrary set $H\subset U$, we define
the variational $\BV$-capacity by
\[
\rcapa_{\BV}(H,U):=\inf\Vert Du\Vert(X),
\]
where the infimum is taken over functions $u\in L^1_{\loc}(X)$ with $u\ge 1$ in a neighborhood of
$H$ and $u=0$ in $X\setminus U$.

Note that by the coarea formula \eqref{eq:coarea}, we know that in fact
\[
\rcapa_{\BV}(H,U)=\inf P(D,X),
\]
where the infimum is taken over $\mu$-measurable sets $D\subset U$ containing a neighborhood of $H$. Now we give a new characterization of the variational $\BV$-capacity.
First we take note of the following lemmas.

\begin{lemma}[{\cite[Lemma 3.1]{L}}]\label{lem:perimeter of G}
For any $G\subset X$, we can find an open set $V\supset G$ with
$\capa_1(V)\le C_2\capa_1(G)$ and $P(V,X)\le C_2\capa_1(G)$,
where $C_2=C_2(C_d,C_P,\lambda)$.
\end{lemma}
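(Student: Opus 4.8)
\textbf{Proof proposal for Lemma~\ref{lem:perimeter of G}.}

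The plan is to pass through the Newton--Sobolev definition of $\capa_1$ and then replace the test function by a superlevel set chosen via the coarea formula. Fix $\eps>0$ and pick $u\in N^{1,1}(X)$ with $u\ge 1$ on $G$ and $\Vert u\Vert_{N^{1,1}(X)}\le \capa_1(G)+\eps$; by truncation we may assume $0\le u\le 1$, and we may also assume $u$ has an upper gradient $g_u$ with $\Vert g_u\Vert_{L^1(X)}\le \capa_1(G)+\eps$ (the truncation only decreases the upper gradient). The first step is to apply the coarea formula \eqref{eq:coarea} to $u$: since
\[
\int_0^1 P(\{u>t\},X)\,dt=\Vert Du\Vert(X)\le \int_X g_u\,d\mu\le \capa_1(G)+\eps,
\]
there is a level $t\in(0,1)$ with $P(\{u>t\},X)\le \capa_1(G)+\eps$. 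Set $W:=\{u>t\}$; then $G\subset\inte(W)$ because $u\ge 1>t$ on $G$ and $u$ is, up to a capacity-zero adjustment, quasicontinuous --- actually it is cleaner to note that $\{u>t\}$ already contains $G$ and we only need an \emph{open} superlevel set, which we get for free from continuity of a Lipschitz representative or, more carefully, from outer regularity. So at the level of sets we have an ``almost open'' set $W\supset G$ with small perimeter.

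The second step is to upgrade $W$ to a genuinely open set $V$ while controlling both $P(V,X)$ and $\capa_1(V)$. The natural device is a smoothing/enlargement of $W$: one takes a maximal-function or a discrete convolution argument, or more simply uses the isoperimetric and Poincar\'e machinery to show that from a set of small perimeter one can produce an open neighborhood of comparable perimeter and capacity. Concretely, since $P(W,X)$ is small we can, by a Whitney-type covering of $\partial^*W$ together with the density estimate \eqref{eq:def of theta} and the relative isoperimetric inequality \eqref{eq:relative isoperimetric inequality}, cover a suitable enlargement of $W$ and bound its capacity and perimeter by $C\,P(W,X)\le C(\capa_1(G)+\eps)$. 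Alternatively, and this is likely how \cite[Lemma 3.1]{L} proceeds, one builds $V$ directly from $u$: let $V:=\{M(g_u\chi_{B}) > \lambda\}\cup\{u>1/2\}$ or a comparable set and check openness from lower semicontinuity of the maximal function, then estimate $\capa_1(V)$ and $P(V,X)$ by weak-type $(1,1)$ bounds for $M$ and by the Poincar\'e inequality. Either way the output is an open $V\supset G$ with $\max\{\capa_1(V),P(V,X)\}\le C_2(\capa_1(G)+\eps)$, and letting $\eps\to0$ gives the claim with a constant $C_2=C_2(C_d,C_P,\lambda)$.

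The main obstacle is the second step: producing an \emph{open} set (not merely a set of finite perimeter) whose $1$-capacity is still controlled by $\capa_1(G)$. Sets of finite perimeter need not be open, and passing to an open neighborhood can blow up the capacity unless the enlargement is done at the correct Whitney scale dictated by the local density of the perimeter measure. The delicate point is to verify that the covering balls used to open up $W$ have radii comparable to $\dist(\cdot,\partial^*W)$ and total $\sum \mu(B_i)/r_i \lesssim P(W,X)$, which requires the lower density bound $\theta_W\ge\alpha$ from \eqref{eq:def of theta} together with a Vitali-type selection; once that summability is in hand, both $\capa_1(V)\le C_d\sum_i \mu(B_i)/r_i$ via \eqref{eq:capacity and Hausdorff measure}-type cutoffs and $P(V,X)\lesssim \sum_i \mu(B_i)/r_i$ follow routinely.
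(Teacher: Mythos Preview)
The paper does not give its own proof of this lemma: it is quoted verbatim from \cite[Lemma 3.1]{L} and used as a black box, so there is no in-paper argument to compare against. What can be assessed is whether your sketch stands on its own.

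The first step (coarea to extract a level set $W=\{u>t\}$ with $P(W,X)\le\capa_1(G)+\eps$) is correct and standard. The problems are in what follows. First, your ``second step'' is not a proof but a list of possible mechanisms (maximal function, Whitney covering of $\partial^*W$, discrete convolution), none of which is carried out; in particular the assertion that a Whitney covering of $\partial^*W$ yields balls with $\sum_i\mu(B_i)/r_i\lesssim P(W,X)$ is precisely the content of a boxing-type inequality and needs an actual argument, not a reference to \eqref{eq:def of theta} and Vitali. Second, the sentence ``$G\subset\inte(W)$ because $u\ge1>t$ on $G$ and $u$ is \ldots\ quasicontinuous'' is not right: quasicontinuity gives that $W$ is $1$-quasiopen, not that its metric interior contains $G$; the fix is to invoke the outer regularity of $\capa_1$ \emph{before} choosing $u$, so that $u\ge1$ on an open $U\supset G$ and hence $U\subset\{u>t\}$, but then you still have to control the perimeter of an \emph{open} set containing $G$, which is exactly the missing step. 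Third, ``letting $\eps\to0$'' is meaningless here: for each $\eps$ you produce a different open set $V_\eps$, and there is no limiting set. The correct conclusion is to take, say, $\eps=\capa_1(G)$ (the case $\capa_1(G)=0$ being handled separately) and absorb the factor $2$ into $C_2$.

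In short, the skeleton is right but the load-bearing step---turning a superlevel set of an $N^{1,1}$ function into a genuinely open set with controlled perimeter and capacity---is asserted rather than proved, and the final limit argument is incorrect as written.
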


\begin{lemma}[{\cite[Lemma 3.9]{L2}}]\label{lem:absolute continuity of variation wrt capacity}
Let $\Omega\subset X$ be an open set and
let $u\in L^1_{\loc}(\Omega)$ with $\Vert Du\Vert(\Omega)<\infty$. Then for every $\eps>0$ there exists $\delta>0$ such that if $A\subset \Omega$ with $\capa_1(A)<\delta$, then $\Vert Du\Vert(A)<\eps$.
\end{lemma}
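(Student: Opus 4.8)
The plan is to deduce the statement from the absolute continuity of the variation measure $\|Du\|$ with respect to the codimension one Hausdorff measure $\mathcal H$, combined with an abstract Borel--Cantelli argument that uses only that $\capa_1$ is a countably subadditive outer capacity whose null sets coincide with the $\mathcal H$-null sets by \eqref{eq:null sets of Hausdorff measure and capacity}.

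First I would argue by contradiction: suppose the conclusion fails, so that for some $\eps>0$ there are sets $A_j\subset\Omega$ with $\capa_1(A_j)<2^{-j}$ but $\|Du\|(A_j)\ge\eps$ for every $j\in\N$. Since $\capa_1$ is an outer capacity and capacity only decreases under intersection, for each $j$ I choose an open set $U_j$ with $A_j\subset U_j\subset\Omega$ and $\capa_1(U_j)<2^{-j}$; then $\|Du\|(U_j)\ge\|Du\|(A_j)\ge\eps$ by the definition of $\|Du\|$ on arbitrary sets. Put $G_k:=\bigcup_{j\ge k}U_j$, which is open, and $N:=\bigcap_{k\in\N}G_k$, so that $G_k\downarrow N$ and $\|Du\|(G_k)\ge\|Du\|(U_k)\ge\eps$. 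By countable subadditivity, $\capa_1(N)\le\sum_{j\ge k}2^{-j}=2^{1-k}$ for every $k$, hence $\capa_1(N)=0$ and therefore $\mathcal H(N)=0$. Because $\|Du\|(\Omega)<\infty$, the set function $\|Du\|$ is a finite Radon measure on $\Omega$, so continuity from above gives $\|Du\|(N)=\lim_{k\to\infty}\|Du\|(G_k)\ge\eps$.

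It then remains to show $\|Du\|(N)=0$, which is the crux. Here I would invoke the coarea formula \eqref{eq:coarea}: since $\int_{\R}P(\{u>t\},\Omega)\,dt=\|Du\|(\Omega)<\infty$, the function $f(t):=P(\{u>t\},\Omega)$ belongs to $L^1(\R)$, and for a.e.\ $t$ the superlevel set $\{u>t\}$ has finite perimeter in $\Omega$, so that $P(\{u>t\},\cdot)$ is a finite Radon measure on $\Omega$ satisfying, by \eqref{eq:def of theta} and $\theta_{\{u>t\}}\le C_d$,
\[
P(\{u>t\},B)\le C_d\,\mathcal H(\partial^*\{u>t\}\cap B)\le C_d\,\mathcal H(B)\qquad\text{for every Borel }B\subset\Omega.
\]
Applying \eqref{eq:coarea} to each open set $G_k$ and letting $k\to\infty$, the integrands $t\mapsto P(\{u>t\},G_k)$ decrease to $t\mapsto P(\{u>t\},N)$ for a.e.\ $t$ and are dominated by $f\in L^1(\R)$, so dominated convergence yields
\[
\|Du\|(N)=\lim_{k\to\infty}\|Du\|(G_k)=\lim_{k\to\infty}\int_{\R}P(\{u>t\},G_k)\,dt=\int_{\R}P(\{u>t\},N)\,dt.
\]
Since $\mathcal H(N)=0$, the displayed inequality forces $P(\{u>t\},N)=0$ for a.e.\ $t$, hence $\|Du\|(N)=0$. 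This contradicts $\|Du\|(N)\ge\eps$ and proves the lemma.

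I expect the main obstacle to be precisely this last step, i.e.\ that $\|Du\|$ charges no set of zero $\mathcal H$-measure; the only delicate point in the argument above is passing the coarea formula from open sets to the limit set $N$, and I have deliberately arranged $N$ to be a countable decreasing intersection of open sets so that this passage reduces to continuity from above together with dominated convergence, avoiding any separate identification of Borel measures. (Alternatively, one could simply cite the known absolute continuity of the $\BV$ variation measure with respect to $\mathcal H$ and proceed directly to the Borel--Cantelli step.)
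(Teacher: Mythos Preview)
The paper does not prove this lemma; it is quoted verbatim from \cite[Lemma 3.9]{L2}, so there is no in-paper argument to compare against. Your proposal is a correct, self-contained proof: the Borel--Cantelli reduction to a $G_\delta$ set $N$ with $\capa_1(N)=0$ is clean, the passage from $\capa_1(N)=0$ to $\mathcal H(N)=0$ via \eqref{eq:null sets of Hausdorff measure and capacity} is exactly what is available, and the key step $\|Du\|(N)=0$ is obtained rigorously by combining the coarea formula on the open sets $G_k$ with continuity from above of each finite Radon measure $P(\{u>t\},\cdot)$, dominated convergence against $t\mapsto P(\{u>t\},\Omega)\in L^1(\R)$, and the estimate $P(\{u>t\},N)\le C_d\,\mathcal H(\partial^*\{u>t\}\cap N)\le C_d\,\mathcal H(N)=0$ from \eqref{eq:def of theta}. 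The only point worth making explicit is that $N$, as a $G_\delta$, is Borel, so \eqref{eq:def of theta} applies directly; you have implicitly used this. Your parenthetical alternative---citing the absolute continuity $\|Du\|\ll\mathcal H$ and then running only the Borel--Cantelli step---is in fact the structure of the original proof in \cite{L2}, so your argument is essentially a reconstruction of that result together with a direct application.
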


\begin{proposition}\label{prop:characterization of capacity}
Given an open set $U\subset X$ and  $H\subset U$, we have
\[
\rcapa_{\BV}(H,U)=\inf P(D,X),
\]
where the infimum is taken over $\mu$-measurable sets $D\subset U$ with $H\subset I_D$.
\end{proposition}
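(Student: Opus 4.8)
The plan is to prove the two inequalities separately. Write $R$ for the right-hand side, i.e. the infimum of $P(D,X)$ over $\mu$-measurable $D\subset U$ with $H\subset I_D$. The easier direction is $R\le \rcapa_{\BV}(H,U)$: given any competitor $u$ for $\rcapa_{\BV}(H,U)$, so $u\in L^1_{\loc}(X)$ with $u\ge 1$ in a neighborhood of $H$ and $u=0$ in $X\setminus U$, I would apply the coarea formula \eqref{eq:coarea} to obtain a level $t\in(0,1)$ with $P(\{u>t\},X)\le \Vert Du\Vert(X)$. The set $D:=\{u>t\}$ satisfies $D\subset U$ (since $u=0$ outside $U$) and contains the neighborhood of $H$ where $u\ge 1>t$; hence $H$ lies in the topological interior of $D$, which is contained in $I_D$. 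Thus $D$ is an admissible competitor for $R$, giving $R\le P(D,X)\le\Vert Du\Vert(X)$, and taking the infimum over $u$ yields $R\le\rcapa_{\BV}(H,U)$.

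For the reverse inequality $\rcapa_{\BV}(H,U)\le R$, let $D\subset U$ be $\mu$-measurable with $H\subset I_D$; I must produce, for each $\eps>0$, a function $u\in L^1_{\loc}(X)$ with $u\ge 1$ in a \emph{neighborhood} of $H$, $u=0$ in $X\setminus U$, and $\Vert Du\Vert(X)\le P(D,X)+\eps$. The natural first guess $u=\ch_D$ fails only because $\ch_D$ need not equal $1$ on an open set containing $H$ — it only does so $\mu$-a.e.\ near $H$, since $H\subset I_D$. The fix is to enlarge $D$ slightly near $H$ by a set of small capacity. Concretely, $H\subset I_D$ means $\ch_D^{\wedge}=1$ on $H$, so $H\subset I_D$ and $I_D$ is, after intersecting with $U$ (which we may assume, replacing $D$ by $D\cap U$, harmless since $P(D\cap U, X)\le P(D,X)$ when... — actually more carefully: I would note $I_D\cap U \supset H$ and work with $I_D\cap U$, or simply keep $D\subset U$). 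The key point: I want to cover $H$ by an open set $V$ of small perimeter and capacity. Since $\mathcal H(\partial^1 I_E)$-type bounds are not yet available here, instead I would proceed as follows — $H\subset I_D$, and $I_D\cap U$ is... this is where care is needed.

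The cleanest route: fix $\eps>0$. Because $I_D$ consists of density-one points of $D$, the set $I_D\setminus D$ has $\mu$-measure zero but need not be small in capacity; however $H\subset I_D$ only, and we need a neighborhood of $H$. I would use Lemma \ref{lem:perimeter of G} applied to $G:=I_D\setminus D$? No — that set can have infinite capacity. The right idea is to \emph{not} try to fix $D$ on all of $I_D$, but to observe that $H$ itself is arbitrary, so the statement as phrased requires $u\ge1$ near $H$, and since $H\subset I_D$, for the purpose of the capacity we may as well prove $\rcapa_{\BV}(H,U)\le\rcapa_{\BV}(H',U)$-type monotonicity reductions and ultimately reduce to showing that $\ch_D$ can be perturbed. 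I would handle this by taking an open set $W\supset I_D\cap U\supset H$; then $D\cup W$ differs from $D$ by a set contained in $W\setminus D\subset (I_D\setminus D)\cup(W\setminus I_D)$ — still not obviously small.

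I therefore expect the \textbf{main obstacle} to be precisely this gap between $I_D$ and an open neighborhood of $H$: constructing an admissible BV competitor requires thickening $D$ near $H$ without increasing the perimeter by more than $\eps$, and the tool for that must be Lemma \ref{lem:absolute continuity of variation wrt capacity} together with Lemma \ref{lem:perimeter of G}. The plan is: since $\capa_1$ is an outer capacity and $H\subset I_D$ with $\ch_D^\wedge=1$ on $H$, one shows $\capa_1(H\setminus \inte_{\mathrm{top}}(\text{good set}))$ can be covered by an open $G$ with $\capa_1(G)<\delta$ (choosing $\delta$ from Lemma \ref{lem:absolute continuity of variation wrt capacity} for $u=\ch_D$ and the target $\eps$); enlarge $G$ via Lemma \ref{lem:perimeter of G} to an open $V\supset G$ with $P(V,X)\le C_2\delta$ and $\capa_1(V)\le C_2\delta$; intersect with $U$ to keep support inside $U$; then set $u:=\max\{\ch_D,\ch_{V\cap U}\}=\ch_{D\cup(V\cap U)}$. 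This $u$ equals $1$ on the open set $V\cap U$ which (by the covering) contains $H$, vanishes outside $U$, and by \eqref{eq:sets of finite perimeter form an algebra} and Lemma \ref{lem:absolute continuity of variation wrt capacity} satisfies $P(D\cup(V\cap U),X)\le P(D,X)+P(V\cap U,X)\le P(D,X)+C(\eps)$ with $C(\eps)\to0$. Letting $\eps\to0$ and taking the infimum over $D$ gives $\rcapa_{\BV}(H,U)\le R$, completing the proof. The delicate bookkeeping is ensuring the open set $V\cap U$ genuinely covers all of $H$ and not merely $H$ minus a capacity-null set; this is where one invokes that $\capa_1$-null sets are exactly $\mathcal H$-null \eqref{eq:null sets of Hausdorff measure and capacity} is not needed, but one does need that points of $H$ where $\ch_D^\wedge=1$ but which are not interior points of $D$ form a set one can absorb into $G$, which follows since those points lie in a set of finite — indeed controllable — capacity after the reductions above.
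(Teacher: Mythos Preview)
Your easy direction $R\le\rcapa_{\BV}(H,U)$ is fine and matches the paper's observation preceding the proposition.

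For the hard direction, however, there is a genuine gap. The entire argument hinges on producing an \emph{open} set that contains $H$ and differs from $D$ only on a set of small capacity (so that you can invoke Lemma~\ref{lem:perimeter of G} and Lemma~\ref{lem:absolute continuity of variation wrt capacity}). You acknowledge this is the delicate point, but the justification you offer---that the points of $H$ not in the topological interior of $D$ ``lie in a set of finite --- indeed controllable --- capacity after the reductions above''---is not established by anything you wrote, and in general it is false: $H$ can be all of $I_D$, and $I_D\setminus\inte D$ can have capacity comparable to $\mu(D)$ (think of $D$ a fat Cantor set). What makes the argument work is the nontrivial fact that $I_D$ is \emph{$1$-quasiopen} whenever $P(D,X)<\infty$; this is Proposition~\ref{prop:set of finite perimeter is quasiopen}, which rests on the quasicontinuity Theorem~\ref{thm:quasicontinuity for sets of finite perimeter}. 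Once you know $I_D$ is $1$-quasiopen, you get for free an open $G$ with $\capa_1(G)$ as small as you like such that $I_D\cup G$ is open and contains $H$. You never invoke this, and without it the ``covering'' step is simply an assumption.

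There is a second, more technical issue. Your competitor is $D\cup(V\cap U)$, and you estimate $P(D\cup(V\cap U),X)\le P(D,X)+P(V\cap U,X)$ via \eqref{eq:sets of finite perimeter form an algebra}. But $P(V\cap U,X)$ is not controlled by $P(V,X)$ alone when $U$ has infinite perimeter: the intersection can pick up boundary from $\partial U$ inside $V$. The paper handles this by exhausting $U$ by open $U_i\Subset U$ with $P(U_i,X)<\infty$, working with $U_i\cap V_i$, and using Lemma~\ref{lem:absolute continuity of variation wrt capacity} applied to $\ch_{U_i}$ (not to $\ch_D$) together with \eqref{eq:def of theta} to bound $P(U_i\cap V_i,X)$ by $2^{-i}\eps$. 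Your sketch applies Lemma~\ref{lem:absolute continuity of variation wrt capacity} to $\ch_D$, which gives control of $P(D,\cdot)$ on small-capacity sets but says nothing about $P(V\cap U,X)$.
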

\begin{proof}
One inequality is clear. To prove the opposite inequality, we can assume that there
exists a $\mu$-measurable set $D\subset U$ with $H\subset I_D$ and $P(D,X)<\infty$.
By applying the coarea formula \eqref{eq:coarea}, we find open sets $U_i\Subset U$ with
 $U=\bigcup_{i\in\N}U_i$, and
$P(U_i,X)<\infty$ for each $i\in\N$.
Fix $\eps>0$, and then fix $i\in\N$.
By Lemma \ref{lem:absolute continuity of variation wrt capacity} there exists $\delta\in (0,\eps)$ such that if $A\subset X$ with $\capa_1(A)<\delta$, then
\begin{equation}\label{eq:small perimeter of Ui in small capacity set}
P(U_i,A)<\frac{2^{-i-1}\alpha \eps}{C_d};
\end{equation}
recall the constant $\alpha$ from \eqref{eq:def of theta}.
By Proposition \ref{prop:set of finite perimeter is quasiopen} we find a set
$G_i\subset X$ with
\[
\capa_1(G_i)<\frac{2^{-i-1}\alpha\delta}{C_d C_2}
\]
such that $(I_D\cap U_i)\cup G_i$ is an open set.
By Lemma \ref{lem:perimeter of G} we find an open set $V_i\supset G_i$ with
$\capa_1(V_i)< \delta$ and $P(V_i,X)< 2^{-i-1}\alpha \eps/C_d$.
By Proposition \ref{prop:capacity of fine closure}, also $\capa_1(I_{V_i})< \delta$.
Clearly $\partial^*(U_i\cap V_i)\subset (\partial^*U_i\cap I_{V_i})\cup \partial^*V_i$.
By \eqref{eq:sets of finite perimeter form an algebra}, $P(U_i\cup V_i,X)<\infty$, and
then by \eqref{eq:def of theta},
\begin{equation}\label{eq:perimeter of Ui cap Vi}
\begin{split}
P(U_i\cap V_i,X)&\le C_d\mathcal H(\partial^*(U_i\cap V_i))\\
&\le C_d(\mathcal H(\partial^*U_i\cap I_{V_i})+
\mathcal H(\partial^*V_i))\\
&\le C_d\alpha^{-1}( P(U_i,I_{V_i})+P(V_i,X))\\
&\overset{\eqref{eq:small perimeter of Ui in small capacity set}}{\le} C_d\alpha^{-1}(2^{-i-1}\alpha \eps/C_d+2^{-i-1}\alpha \eps/C_d)\\
&= 2^{-i} \eps.
\end{split}
\end{equation}
This can be done for each $i\in\N$, and then the set
\[
(I_D\cap U)\cup \bigcup_{i\in\N} (U_i\cap V_i)
=\bigcup_{i\in\N} (I_D\cup V_i)\cap U_i=\bigcup_{i\in\N} ((I_D\cap U_i)\cup V_i)\cap U_i
\]
contains $H$, is contained in $U$, and is an open set since each $(I_D\cap U_i)\cup V_i$ and each $U_i$ is an open set.
Moreover, by the fact that $\mu((I_D\cap U)\Delta D)=0$, and by the lower semicontinuity
and subadditivity \eqref{eq:sets of finite perimeter form an algebra} of perimeter,
\begin{align*}
P\left((I_D\cap U)\cup \bigcup_{i\in\N} (U_i\cap V_i),X\right)
&= P\left(D\cup \bigcup_{i\in\N} (U_i\cap V_i),X\right)\\
&\le P(D,X)+\sum_{i\in\N}P(U_i\cap V_i,X)\\
&\le P(D,X)+\eps
\end{align*}
by \eqref{eq:perimeter of Ui cap Vi}.
Since $\eps>0$ was arbitrary,
we obtain the result.
\end{proof}

\begin{remark}
The above proposition essentially states that the $\BV$-capacity turns out to be an
outer capacity even if we do not define it as such. This is similar to
\cite[Theorem 4.1]{BB-VC}, where it is shown that the variational capacity $\rcapa_p$
is an outer capacity under very weak assumptions.

Moreover, Proposition \ref{prop:characterization of capacity} and another application of the coarea formula give
\begin{equation}\label{eq:BV lower limit characterization of BV capacity}
\rcapa_{\BV}(H,U)=\inf\Vert Du\Vert(X),
\end{equation}
where the infimum is taken over functions $u\in L^1_{\loc}(X)$ with $u^{\wedge}\ge 1$ in
$H$ and $u=0$ in $X\setminus U$ (more precisely, $u=0$ $\mu$-almost everywhere in $X\setminus U$;
recall that we understand $\BV$ functions to be $\mu$-equivalence classes).
\end{remark}

Following the definitions and terminology used in the case $p>1$, we give the following definition.

\begin{definition}
Given an open set $U\subset X$ and  $H\subset U$, we say that a $\mu$-measurable set
$D\subset U$ is a $1$-capacitary potential for $H$ in $U$ if
$H\subset I_D$ and
\[
P(D,X)=\rcapa_{\BV}(H,U)<\infty.
\]
\end{definition}

Of course, Proposition \ref{prop:characterization of capacity} guarantees
that this definition makes sense.
Almost any example on the real line shows that a $1$-capacitary potential is not unique, contrary to the case $p>1$. However, we have the following existence result.

\begin{proposition}\label{prop:existence of 1capacitary potential}
Let $U\subset X$ be an open set and let $H\subset X$ with $I_H\subset U$.
If $\rcapa_{\BV}(I_H,U)<\infty$, then a $1$-capacitary potential for $I_H$ in $ U$ exists.
\end{proposition}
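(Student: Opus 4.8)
The plan is to obtain a $1$-capacitary potential as the limit of a minimizing sequence, using the compactness of $\BV$ together with a lower semicontinuity argument. First I would take a minimizing sequence: by Proposition \ref{prop:characterization of capacity} there exist $\mu$-measurable sets $D_j\subset U$ with $I_H\subset I_{D_j}$ and $P(D_j,X)\to \rcapa_{\BV}(I_H,U)=:c<\infty$. The issue is that the sets $D_j$ need not be contained in any fixed bounded region, so I would first localize. Since $I_H\subset U$, I can fix a bounded open set $U'\Subset U$ with $I_H\cap U'\neq\emptyset$ (if $I_H=\emptyset$ the statement is trivial, with $D=\emptyset$), and replace $D_j$ by $D_j\cap U''$ for a slightly larger bounded open $U'\Subset U''\Subset U$; by the relative isoperimetric/coarea machinery one can choose such $U''$ with $P(U'',X)<\infty$, and then $P(D_j\cap U'',X)\le P(D_j,X)+P(U'',X)$ is uniformly bounded, while $I_H\cap U''$ still satisfies $I_H\cap U''\subset I_{D_j\cap U''}$. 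Actually, to keep $I_H\subset I_{D_j}$ globally one should be slightly more careful: work with $D_j':=(D_j\cap U'')\cup (\text{a fixed set }D_0\subset U\text{ with }I_H\subset I_{D_0})$, or simply observe that one only needs a minimizing sequence whose members lie in a common bounded set and whose perimeters converge to $c$; so I would exhaust $U$ by bounded open sets $U_i\Subset U$ with finite perimeter as in the proof of Proposition \ref{prop:characterization of capacity}, and for the potential it suffices to handle each relevant piece. The cleanest route is: since $\rcapa_{\BV}(I_H,U)<\infty$, the competitor sets can be taken inside a fixed bounded open $W\Subset U$ up to an error, because any competitor $D$ can be truncated to $D\cap W$ at the cost of $P(W,X)$, and then a diagonal/exhaustion argument recovers the exact infimum in the limit.

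Once the sequence $(D_j)$ is confined to a fixed bounded open set $W$ with $\mu(W)<\infty$ and $\sup_j P(D_j,X)<\infty$, the characteristic functions $\ch_{D_j}$ are bounded in $\BV(X)$ with supports in $\overline W$ (compact by properness), so by the standard $\BV$ compactness theorem (from \cite{M}) a subsequence converges in $L^1(X)$ to some $u\in\BV(X)$ with $0\le u\le 1$ and $u=0$ outside $W\subset U$. Passing to the coarea representation, for a.e. $t\in(0,1)$ the superlevel set $D:=\{u>t\}$ satisfies $P(D,X)\le\|Du\|(X)\le\liminf_j P(D_j,X)=c$ by lower semicontinuity of the total variation under $L^1$ convergence; and $D\subset U$ up to a null set, which can be arranged genuinely by intersecting with $U$ since $u=0$ $\mu$-a.e. outside $U$. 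Combined with $P(D,X)\ge\rcapa_{\BV}(I_H,U)=c$ from Proposition \ref{prop:characterization of capacity} (noting $I_D$ will be shown to contain $I_H$), this gives $P(D,X)=c$.

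The main obstacle, and the step I expect to require the most care, is verifying that $I_H\subset I_D$, i.e. that the limit set still ``contains'' $H$ in the measure-theoretic interior sense — $L^1$-convergence alone does not preserve such pointwise density conditions. The fix is to use the quasicontinuity/fine-openness machinery of Section \ref{sec:necessity}: each $D_j$ has $I_H\subset I_{D_j}$, and the point is to choose the superlevel set $D=\{u>t\}$ together with a good representative so that, away from a set of $1$-capacity (hence $\mathcal H$, hence by \eqref{eq:null sets of Hausdorff measure and capacity}) zero, the density condition survives; more precisely, one uses that on the minimizing sequence one may additionally pass to upper gradients / Newtonian truncations, invoking the absolute continuity of $\|Du\|$ with respect to $\capa_1$ (Lemma \ref{lem:absolute continuity of variation wrt capacity}) and Lemma \ref{lem:perimeter of G} to trim off the exceptional sets — exactly the bookkeeping done in the proof of Proposition \ref{prop:characterization of capacity}. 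Concretely I would: (i) reduce to $I_H$ replacing $H$ (allowed since the hypothesis is on $\rcapa_{\BV}(I_H,U)$ and $I_{I_H}=I_H$); (ii) for each $j$ enlarge $D_j$ slightly to an open neighborhood of $I_H$ with controlled perimeter via Lemma \ref{lem:perimeter of G} applied to a capacity-small bad set coming from Proposition \ref{prop:set of finite perimeter is quasiopen}, so that $I_H\subset\inte(\widetilde D_j)$ with $P(\widetilde D_j,X)\le P(D_j,X)+2^{-j}$; (iii) take the $\BV$ limit $u$ of $\ch_{\widetilde D_j\cap W}$ and a superlevel set $D=\{u>t\}$; (iv) check $I_H\subset I_D$ by a Lebesgue-point argument combined with the fact that $u$ equals $1$ on neighborhoods of $I_H$ ``in the limit'' — this last verification, controlling where mass escapes, is the delicate part and is handled precisely by the capacity-absolute-continuity estimate. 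Having secured both $P(D,X)=\rcapa_{\BV}(I_H,U)<\infty$ and $I_H\subset I_D$ with $D\subset U$, the set $D$ is the desired $1$-capacitary potential.
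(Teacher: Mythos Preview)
Your skeleton---minimizing sequence, $\BV$ compactness, lower semicontinuity of perimeter---matches the paper's, but you make the argument much harder than it needs to be in two places.

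First, the localization is unnecessary. The compactness theorem the paper invokes (\cite[Theorem~3.7]{M}) already yields a subsequence converging in $L^1_{\loc}(X)$ from the uniform perimeter bound alone; no confinement to a bounded set is required, since for characteristic functions the local $L^1$ bound is automatic ($0\le\ch_{D_j}\le 1$). Because each $\ch_{D_j}\in\{0,1\}$, the $L^1_{\loc}$ limit is already of the form $\ch_D$ for some $\mu$-measurable $D$---no coarea/superlevel step is needed---and $D\subset U$ up to a null set since every $D_j\subset U$.

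Second, and more importantly, what you flag as the ``main obstacle'' is in the paper a one-line step. From $I_H\subset I_{D_j}$ and the Lebesgue differentiation theorem (each $D_j$ is $\mu$-measurable) one has $\mu(I_H\setminus D_j)=0$ for every $j$. This survives $L^1_{\loc}$ convergence verbatim:
\[
\mu(B(x,R)\cap I_H\setminus D)=\int_{B(x,R)\cap I_H}\ch_{X\setminus D}\,d\mu
=\lim_{j\to\infty}\int_{B(x,R)\cap I_H}\ch_{X\setminus D_j}\,d\mu=0
\]
for every ball $B(x,R)$, so $\mu(I_H\setminus D)=0$, and hence $I_H\subset I_D$. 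None of the quasicontinuity, fine-openness, or capacity bookkeeping you propose (Lemmas~\ref{lem:perimeter of G} and~\ref{lem:absolute continuity of variation wrt capacity}, Proposition~\ref{prop:set of finite perimeter is quasiopen}) is needed here; the condition $I_H\subset I_{D_j}$ is a purely measure-theoretic constraint ($\mu(I_H\setminus D_j)=0$) and is preserved under $L^1_{\loc}$ limits for exactly that reason. Your route might be completable, but it is a significant detour around an essentially trivial step.
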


Note that here we do not even need $H$ to be
$\mu$-measurable. However, it can be
shown that $I_H$, $O_H$, and $\partial^*H$ are still Borel sets.

\begin{proof}
Take a sequence of $\mu$-measurable sets $D_i\subset  U$ with $I_H\subset I_{D_i}$ and
\[
P(D_i,X)\to \rcapa_{\BV}(I_H,U).
\]
By the weak compactness of $\BV$ functions, see \cite[Theorem 3.7]{M}, there exists $D\subset U$ such that by passing to a subsequence (not relabeled),
$\ch_{D_i}\to \ch_D$ in $L^1_{\loc}(X)$. By the lower semicontinuity of perimeter with respect to convergence in $L^1_{\loc}(X)$, we have
\[
P(D,X)\le \liminf_{i\to\infty} P(D_i,X)=\rcapa_{\BV}(I_H,U).
\]
Fix $x\in X$ and $R>0$. Since also $\ch_{X\setminus D_i}\to \ch_{X\setminus D}$ in $L^1_{\loc}(X)$, we have
\[
\mu(B(x,R)\cap I_H\setminus D)=\int_{B(x,R)\cap I_H}\ch_{X\setminus D}\,d\mu
=\lim_{i\to\infty}\int_{B(x,R)\cap I_H}\ch_{X\setminus D_i}\,d\mu=0
\]
since $\mu(I_H\setminus D_i)=0$ for all $i\in\N$ (by the Lebesgue differentiation theorem). By letting $R\to\infty$, we get
$\mu(I_H\setminus D)=0$, and so $I_H\subset I_D$.
Then $D$ is a $1$-capacitary potential for $I_H$ in $U$.
\end{proof}

The crux of the proof of Theorem \ref{thm:sufficiency} is obtaining the following \emph{Choquet}-type property in the case $p=1$.
For this, we combine ideas from the proof of the case $p>1$ given in \cite[Theorem 7.1]{BBL-CCK} with methods of metric space $\BV$ theory, see especially
\cite[Theorem 5.3]{A1}.

\begin{proposition}\label{prop:Choquet type property}
Let $A\subset X$ be a $\mu$-measurable set and let $\eps>0$. Then there exists an open set
$U\subset X$ with $U\cup b_1 A=X$ and
\[
\capa_1(U\cap \overline{I_A}^1)<\eps.
\]
\end{proposition}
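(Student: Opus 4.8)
The plan is to build the open set $U$ by approximating the $1$-base $b_1 A$ from the inside by a set where the relevant capacitary density is uniformly small, and then passing to a complement. First I would recall that $X \setminus b_1 A$ is exactly the set of points where $A$ is $1$-thin, i.e. where $r\,\rcapa_1(B(x,r)\cap A, B(x,2r))/\mu(B(x,r)) \to 0$. The first step is to stratify $X \setminus b_1 A$ according to the rate of decay: for $j \in \N$ let $F_j$ be the set of points $x$ where this quantity is bounded by $2^{-j}$ for all $r$ below some threshold depending only on $j$ (not on $x$), intersected with a fixed ball $B(0,j)$ so we work with bounded sets. Each $x \in X\setminus b_1 A$ eventually lands in some $F_j$, so $X \setminus b_1 A = \bigcup_j F_j$, and on $F_j$ the capacitary density of $A$ is uniformly controlled.

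The second step is to cover $F_j$ economically by balls on which Lemma \ref{lem:capacity of fine closure preliminary result} applies to $B(x,r)\cap A$. Using a Vitali-type covering argument together with the uniform smallness on $F_j$, for each $\eta>0$ I would produce a countable family of balls $\{B(x_i, r_i)\}$ covering $F_j$, with the $5r_i$-dilates still giving the hypothesis \eqref{eq:a priori smallness of capacity} needed by Lemma \ref{lem:capacity of fine closure preliminary result}, and with $\sum_i \rcapa_1(B(x_i,r_i)\cap A, B(x_i,2r_i))$ small — this is where one spends the $2^{-j}$ gain, since the capacitary density being $\le 2^{-j}$ on $F_j$ forces $r_i\,\rcapa_1(B(x_i,r_i)\cap A,B(x_i,2r_i)) \le 2^{-j}\mu(B(x_i,r_i))$, and summing against a bounded-overlap covering of $F_j$ (contained in a fixed bounded set) yields a total capacity $\le C 2^{-j}\mu(B(0,j+1))$, which we can make $< 2^{-j}\eps$ by shrinking radii. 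By Lemma \ref{lem:capacity of fine closure preliminary result}, on each such ball $\rcapa_1(B(x_i,r_i)\cap \overline{A}^1, B(x_i,2r_i)) \le C_1\,\rcapa_1(B(x_i,r_i)\cap A, B(x_i,2r_i))$, and summing (using that the $1$-capacity $\capa_1$ is dominated by a controlled multiple of $\rcapa_1(\cdot,B(x_i,2r_i))$ via cutoff functions, cf.\ \eqref{eq:capacity and Hausdorff measure}-style estimates) gives $\capa_1(F_j \cap \overline{A}^1) \le C 2^{-j}\eps$. Summing over $j$: $\capa_1((X\setminus b_1 A)\cap \overline{A}^1) \le C\eps$.

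The third step is to replace this last set by an open set of comparably small capacity, using that $\capa_1$ is an outer capacity: choose open $W \supset (X\setminus b_1 A)\cap \overline{A}^1$ with $\capa_1(W) < C\eps$. Then set $U := W \cup (X\setminus b_1 A)$. Wait — $X \setminus b_1 A$ need not be open, so instead I would take $U := W \cup (X \setminus \overline{I_A}^1)$; since $\overline{I_A}^1 \subset \overline{A}^1$ and, by Lemma \ref{lem:fine closure of measure theoretic interior}, $\overline{I_A}^1 = b_1 I_A \subset b_1 A$ (using $I_A \subset A$ hence $b_1 I_A \subset b_1 A$), we get $X \setminus \overline{I_A}^1 \supset X \setminus b_1 A$ and it is genuinely open because $\overline{I_A}^1$ is $1$-finely closed — no, that only makes its complement $1$-finely open. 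This is the point to be careful: I would instead argue that $(X\setminus b_1 A)\cap\overline{I_A}^1$ already has small $1$-capacity by the same covering argument applied with $\overline{I_A}^1$ in place of $\overline{A}^1$ (the monotonicity $\overline{I_A}^1\subset\overline{A}^1$ makes this immediate), take an \emph{open} $U \supset (X\setminus b_1 A)\cap \overline{I_A}^1$ with $\capa_1(U) < \eps$ via outer regularity, and then enlarge: $U' := U \cup (X\setminus \overline{I_A}^1)$ covers $X \setminus b_1 A \supset X\setminus b_1 A$; but openness of $U'$ still fails. The clean fix, which I expect to be the real content, is to use outer regularity of $\capa_1$ directly on the finely-closed set: pick open $U \supset (X\setminus b_1 A) \cap \overline{I_A}^1$ with small capacity, and separately note $X \setminus b_1 A$ is contained in the $1$-quasiopen complement of $\overline{I_A}^1$; combining these with Proposition \ref{prop:quasiopen is finely open} is circular, so instead I would accept a non-open enlargement and then apply outer regularity of $\capa_1$ one final time to the whole set $U' \setminus b_1 A \cup (U'\cap \overline{I_A}^1)$.

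\textbf{Main obstacle.} The genuine difficulty is reconciling ``open'' with ``$1$-finely closed complement of $\overline{I_A}^1$'': the set $X \setminus \overline{I_A}^1$ is $1$-finely open but not open, so $U$ must be obtained from it by the outer regularity of $\capa_1$ together with the fact that $\overline{I_A}^1$ differs from a $1$-quasiopen-complement only by a capacity-zero set. The cleanest route is: (i) show $\capa_1((X\setminus b_1 A)\cap\overline{I_A}^1) \le C\eps$ by the stratify-and-cover argument using Lemma \ref{lem:capacity of fine closure preliminary result}; (ii) pick open $G \supset \overline{I_A}^1 \setminus b_1 A$ with $\capa_1(G) < \eps$ (outer capacity); (iii) separately, since $X\setminus\overline{I_A}^1$ is $1$-finely open, it is "$1$-quasiopen modulo nothing," but to get a bona fide open set one invokes that the $1$-fine topology's quasi-Lindelöf-type properties let us write $X\setminus b_1 A$ as (open) $\cup$ ($\capa_1$-null), absorbing the null part into $\eps$; then $U := $ that open set $\cup\, G$ works, satisfying $U \supset X\setminus b_1 A$ hence $U\cup b_1 A = X$, and $U\cap\overline{I_A}^1 \subset G \cup ((X\setminus b_1 A)\cap\overline{I_A}^1$'s open hull$)$, so $\capa_1(U\cap\overline{I_A}^1) < C\eps$; rescale $\eps$.
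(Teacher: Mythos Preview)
There is a genuine gap in your Step~2. You bound
\[
\sum_i \rcapa_1(B(x_i,r_i)\cap A, B(x_i,2r_i)) \le 2^{-j}\sum_i \frac{\mu(B(x_i,r_i))}{r_i}
\]
and then assert the right-hand side is $\le C\,2^{-j}\mu(B(0,j+1))$ and can be made small ``by shrinking radii''. But $\sum_i \mu(B(x_i,r_i))/r_i$ is a codimension-one quantity and does not shrink with the radii; for a bounded-overlap cover of a fixed bounded set it behaves like $(\textrm{measure of set})/(\textrm{scale})$ and diverges as the radii go to~$0$. Already on $\R$ one has $\mu(B)/r=2$, and covering a set of measure $m$ by balls of radius $r$ takes $\sim m/r$ balls, so the sum is $\sim 2m/r\to\infty$. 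The factor $2^{-j}$ is fixed once $j$ is, so it cannot absorb this. Your argument therefore gives no bound on $\capa_1(F_j\cap\overline{A}^1)$, and no rescue is visible: Lemma~\ref{lem:capacity of fine closure preliminary result} only compares $\rcapa_1$ of $\overline{A}^1$ with $\rcapa_1$ of $A$ in the \emph{same} ball, so you cannot avoid summing the latter.

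You also correctly identify the second obstacle --- producing a genuinely \emph{open} $U\supset X\setminus b_1 A$ --- but your proposed fix in~(iii), writing $X\setminus b_1 A$ as $(\textrm{open})\cup(\capa_1\textrm{-null})$ via ``quasi-Lindel\"of-type properties'', is circular: in this paper the only route to such a statement is Proposition~\ref{prop:complement of fine closure is quasiopen}, which is \emph{deduced from} the proposition you are trying to prove.

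The paper's argument is entirely different and bypasses both problems. It covers $X$ by balls $B(x_j,r_j)$ so that every point is hit by arbitrarily small balls, and for each $j$ takes a $1$-capacitary potential $D_j$ (a perimeter-minimizing set) for $I_{B(x_j,r_j)\cap A}$ in $B(x_j,2r_j)$. The heart of the proof is a Cartan-type claim: outside an $\mathcal H$-null set, every $x\notin b_1 A$ lies in some $B(x_j,r_j)\cap O_{D_j}$. This is proved by contradiction --- if $D_j$ had lower density $\ge\gamma$ at $x$ for all small enough $B(x_j,r_j)\ni x$, one glues a cheap local cover of $A$ near $x$ onto $D_j$ to obtain a competitor $w$, and the relative isoperimetric inequality forces $\|Dw\|(X)<P(D_j,X)$, contradicting minimality of $D_j$. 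Since each $D_j$ has finite perimeter, $O_{D_j}$ is $1$-quasiopen (Proposition~\ref{prop:set of finite perimeter is quasiopen}), giving genuinely open $U_j$ that differ from $B(x_j,r_j)\cap O_{D_j}$ by sets of arbitrarily small capacity; and crucially $B(x_j,r_j)\cap O_{D_j}\cap I_A=\emptyset$ by construction, so $U\cap I_A$ (not $U\cap\overline{I_A}^1$) has small capacity. Openness of $U$ then gives $U\cap\overline{I_A}^1=U\cap b_1 I_A\subset b_1(U\cap I_A)\subset\overline{U\cap I_A}^1$, and Proposition~\ref{prop:capacity of fine closure} transfers the bound. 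The ideas you are missing are: manufacturing, via perimeter-minimizers, sets whose measure-theoretic exteriors are simultaneously quasiopen and disjoint from $I_A$; and the variational (Cartan-type) argument that forces those exteriors to cover $X\setminus b_1 A$ up to an $\mathcal H$-null set.
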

\begin{proof}
Let $\{B(x_j,r_j)\}_{j\in\N}$ be a covering of $X$ by balls such that every point is covered by arbitrarily small balls; this is possible since the space is separable.
We can assume that $r_j<\diam(X)/8$ for all $j\in\N$.
For each $j\in\N$, by \cite[Lemma 6.2]{KKST} there exists $s\in [r_j,2r_j]$ such that
\[
P(B(x_j,s))\le 2C_d\frac{\mu(B(x_j,s))}{s}<\infty.
\]
In particular, $\rcapa_{\BV}(I_{B(x_j,r_j)\cap A},B(x_j,2r_j))<\infty$, and so by Proposition \ref{prop:existence of 1capacitary potential}, for each $j\in\N$ we can let  $D_j\subset X$ be a $1$-capacitary potential for $I_{B(x_j,r_j)\cap A}$ in $B(x_j,2r_j)$.
Then $B(x_j,r_j)\cap I_A\subset I_{D_j}$, and thus
\begin{equation}\label{eq:solutions do not intersect measure theoretic interior}
B(x_j,r_j)\cap O_{D_j}\cap I_A=\emptyset\qquad\textrm{for each }j\in\N.
\end{equation}

\paragraph{Claim:} We have $\mathcal H\left(X \setminus \big(b_1 A\cup \bigcup_{j\in\N}(B(x_j,r_j)\cap O_{D_j})\big)\right)=0$.

\paragraph{Proof:}For any set of finite perimeter $E\subset X$, denote by $\Sigma_{\gamma} E$ the subset of $\partial^*E$ where \eqref{eq:definition of gamma} holds. Then let
\[
N:=\bigcup_{j\in\N}\partial^*D_j\setminus \Sigma_{\gamma}D_j,
\]
so that $\mathcal H(N)=0$.
Fix $x\in X\setminus (b_1 A\cup N)$.
If for some $j\in\N$ we have $x\in B(x_j,r_j)$ and
\[
\liminf_{r\to 0}\frac{\mu(B(x,r)\cap D_j)}{\mu(B(x,r))}<\gamma,
\]
then $x\in B(x_j,r_j)\cap O_{D_j}$.
We assume that
\begin{equation}\label{eq:solution has density at least gamma}
\liminf_{r\to 0}\frac{\mu(B(x,r)\cap D_j)}{\mu(B(x,r))}\ge \gamma
\end{equation}
for all $j\in\N$ with $x\in B(x_j,r_j)$, and derive a contradiction,
thus proving the claim.
Define $\delta>0$ by
\[
\delta:=\min\left\{\frac{\gamma}{2},\frac{1}{(3C_{SP})^{Q} 
C_d^{(1+\lceil\log_2(4\lambda)\rceil)(Q+1)}} \right\}.
\]
Here $\lceil a \rceil$ is the smallest integer at least $a\in\R$,
and $C_{SP}$ is the constant appearing in the relative isoperimetric inequality \eqref{eq:relative isoperimetric inequality}.
Fix $j\in\N$ such that $B(x_j,r_j)\ni x$, and $r_j>0$ is so small that
\begin{equation}\label{eq:smallness of capacity around x}
r\frac{\rcapa_1(B(x,r)\cap A,B(x,2r))}{\mu(B(x,r))}<\frac{\delta}{5 C_d^3 C_S^2}
\end{equation}
for all $r\in (0,2r_j]$, where $C_S$ is the constant from the Sobolev inequality \eqref{eq:sobolev inequality}. For any such $r$, using the definition of the variational capacity, we find $u_r\in N^{1,1}(X)$ with $u_r\ge 1$ in $B(x,r)\cap A$, $u=0$ in $X\setminus B(x,2r)$, and 
\[
\frac{\delta}{5 C_d^3 C_S^2}\ge \frac{r}{\mu(B(x,r))}\int_X g_{u_r}\,d\mu\ge 
\frac{r}{\mu(B(x,r))}\Vert Du_r\Vert(X),
\]
where $g_{u_r}$ is an upper gradient of $u_r$,
and where the last inequality follows from the fact that Lipschitz functions are dense in
$N^{1,1}(X)$, see e.g. \cite[Theorem 5.1]{BB}.
By using the coarea formula \eqref{eq:coarea}, we find a number $t\in (0,1)$ such that
\[
\frac{r}{\mu(B(x,r))} P(\{u_r>t\},X)\le \frac{\delta}{5 C_d^3 C_S^2}.
\]
In conclusion, for any $r\in (0,2r_j]$ there exists a set $D\subset B(x,2r)$ covering $B(x,r)\cap A$ such that
\begin{equation}\label{eq:perimeter of Dj 1}
\frac{r}{\mu(B(x,r))} P(D,X)\le \frac{\delta}{5 C_d^3 C_S^2}.
\end{equation}
On the other hand, by using \cite[Lemma 11.22]{BB}, the doubling property of the measure, and \eqref{eq:smallness of capacity around x} with $r=2r_j$, we obtain
\begin{equation}\label{eq:smallness of capacity in Bj}
\begin{split}
r_j\frac{\rcapa_1(B(x_j,r_j)\cap A,B(x_j,2r_j))}{\mu(B(x_j,r_j))}
&\le 5C_S  r_j\frac{\rcapa_1(B(x_j,r_j)\cap A,B(x_j,4r_j))}{\mu(B(x_j,r_j))}\\
&\le 5C_S C_d^2 r_j\frac{\rcapa_1(B(x,2r_j)\cap A,B(x,4r_j))}{\mu(B(x,2r_j))}\\
&< \frac{\delta}{2C_d C_S}.
\end{split}
\end{equation}
Then as above, we conclude that there exists a set $\widetilde{D}\subset B(x_j,2r_j)$ with
$\widetilde{D}\supset B(x_j,r_j)\cap A$ and
\[
\frac{r_j}{\mu(B(x_j,r_j))} P(\widetilde{D},X)< \frac{\delta}{2C_d C_S}.
\]
Note that $I_{B(x_j,r_j)\cap A}\subset I_{\widetilde{D}}$, and so
$\widetilde{D}$ is admissible for
$\rcapa_{\BV}(I_{B(x_j,r_j)\cap A},\allowbreak B(x_j,2r_j))$.
Then since  $D_j\subset X$ is a $1$-capacitary potential for $I_{B(x_j,r_j)\cap A}$ in $B(x_j,2r_j)$, we necessarily have also
\begin{equation}\label{eq:perimeter of Dj 2}
\frac{r_j}{\mu(B(x_j,r_j))} P(D_j,X)<  \frac{\delta}{2C_d C_S}.
\end{equation}
By \eqref{eq:solution has density at least gamma} we have
\[
\liminf_{r\to 0}\frac{\mu(B(x,r)\cap D_j)}{\mu(B(x,r))}> \delta.
\]
However,
\begin{equation}\label{eq:smalless of the measure of Aj}
\frac{\mu(B(x,r)\cap D_j)}{\mu(B(x,r))}<\delta\qquad \textrm{for all }r\ge r_j;
\end{equation}
this can be seen as follows. Let $r\ge r_j$, and
note that $D_j\subset B(x_j,2r_j)$.
By the isoperimetric inequality \eqref{eq:isop inequality with zero boundary values}
and \eqref{eq:perimeter of Dj 2},
\begin{align*}
\mu(B(x,r)\cap D_j)
\le \mu(D_j)
&\le 2C_S r_j P(D_j,X)\\
&< 2C_S r_j \frac{\delta \mu(B(x_j,r_j))}{2C_d C_S r_j}
=  \frac{\delta \mu(B(x_j,r_j))}{C_d}.
\end{align*}
But now $B(x_j,r_j)\subset B(x,2r)$, and so by the doubling property of the measure,
\[
\mu(B(x_j,r_j))\le C_d \mu(B(x,r)),
\]
establishing \eqref{eq:smalless of the measure of Aj}.
Thus we can define $R\in (0, r_j]$ by
\[
R:=\inf \left\{r>0:\, \frac{\mu(B(x,r)\cap D_j)}{\mu(B(x,r))}< \delta\right\}.
\]
Clearly
\begin{equation}\label{eq:smallness of Dj in the boundary case}
\frac{\mu(B(x,R)\cap D_j)}{\mu(B(x,R))}< C_d\delta.
\end{equation}
It follows that
\begin{equation}\label{eq:smallness of Dj in one over lambda ball}
\frac{\mu(B(x,R/(4\lambda))\cap D_j)}{\mu(B(x,R/(4\lambda)))}\le C_d^{1+\lceil\log_2 (4\lambda)\rceil}\delta
\le \frac{1}{2}.
\end{equation}
Thus by the definition of $R$, the relative isoperimetric inequality \eqref{eq:relative isoperimetric inequality},
and \eqref{eq:smallness of Dj in one over lambda ball}, we obtain
\begin{align*}
\delta \frac{\mu(B(x,R))}{C_d^{\lceil\log_2 (4\lambda)\rceil}}
&\le \delta \mu(B(x,R/(4\lambda)))\\
&\le \mu(B(x,R/(4\lambda))\cap D_j)\\
&\le 2C_{SP}\frac{R}{4\lambda}\left(\frac{\mu(B(x,R/(4\lambda))\cap D_j)}{\mu(B(x,R/(4\lambda)))}\right)^{1/Q} P(D_j,B(x,R/2))\\
&\le C_{SP}\frac{R}{2\lambda}\left(C_d^{1+\lceil\log_2 (4\lambda)\rceil}\delta\right)^{1/Q} P(D_j,B(x,R/2)).
\end{align*}
By the choice of $\delta$, this implies that
\begin{equation}\label{eq:perimeter of Dj in ball to be modified}
P(D_j,B(x,R/2))\ge 6 C_d \delta \frac{\mu(B(x,R))}{R}.
\end{equation}
But by \eqref{eq:perimeter of Dj 1}, we find a $\mu$-measurable set $D\subset B(x,2R)$ covering $B(x,R)\cap A$ such that
\begin{equation}\label{eq:perimeter of the better solution}
P(D,X)\le \frac{\delta}{5C_d^3 C_S^2}\frac{\mu(B(x,R))}{R}.
\end{equation}
Then by the isoperimetric inequality \eqref{eq:isop inequality with zero boundary values}, we have
\begin{equation}\label{eq:measure of D}
\mu(D)\le 2C_S R P(D,X)\le  \frac{\delta\mu(B(x,R))}{C_d^3 C_S}.
\end{equation}
Take $\eta\in \Lip_c(X)$ with $0\le \eta \le 1$, $\eta=1$ in $B(x,R/2)$, $\eta=0$ in
$X\setminus B(x,R)$, and with an upper gradient $g_{\eta}\le 2/R$.
Let
\[
w:=\eta \ch_{I_D} +(1-\eta)\ch_{I_{D_j}}.
\]
Then by a Leibniz rule, see \cite[Lemma 3.2]{HKLS},
\begin{align*}
&\Vert Dw\Vert(X)\\
&\ \ \le P(D,B(x,R))+P(D_j,X\setminus B(x,R/2))+
\int_{D\cup D_j}g_\eta\,d\mu\\
&\ \ \overset{\eqref{eq:perimeter of the better solution}}{\le} \frac{\delta\mu(B(x,R))}{5C_d^3 C_S^2 R}+P(D_j,X\setminus B(x,R/2))
+\frac{2\mu(B(x,R)\cap(D\cup D_j))}{R}\\
&\ \ \le \frac{\delta\mu(B(x,R))}{5C_d^3 C_S^2 R}+P(D_j,X\setminus B(x,R/2))
+\frac{2\delta}{R}\left(\frac{1}{C_d^3 C_S}+ C_d\right)\mu(B(x,R))
\end{align*}
by \eqref{eq:smallness of Dj in the boundary case} and \eqref{eq:measure of D}.
Thus by \eqref{eq:perimeter of Dj in ball to be modified}
\begin{equation}\label{eq:contradictory inequality}
\begin{split}
&\Vert Dw\Vert(X)-P(D_j,X)\\
&\qquad \le \frac{\delta\mu(B(x,R))}{5C_d^3 C_S^2 R}
+\frac{2\delta}{R}\left(\frac{1}{C_d^3 C_S}+ C_d\right)\mu(B(x,R))
-P(D_j,B(x,R/2))\\
&\qquad \le \frac{\delta\mu(B(x,R))}{5C_d^3 C_S^2 R}
+\frac{2\delta}{R}\left(\frac{1}{C_d^3 C_S}+ C_d\right)\mu(B(x,R))-6 C_d \delta \frac{\mu(B(x,R))}{R}\\
&\qquad \le -C_d\delta \frac{\mu(B(x,R))}{R}<0.
\end{split}
\end{equation}
Recall that $I_{B(x_j,r_j)\cap A}\subset I_{D_j}$.
Since $B(x,R)\cap A\subset D$, also $I_{B(x_j,r_j)\cap A}\cap B(x,R)\subset I_{D}$. Thus we have $I_{B(x_j,r_j)\cap A}\subset \{w=1\}$,
and so
$I_{B(x_j,r_j)\cap A}\subset \{w^{\wedge}=1\}$.
Since also $B(x,R)\subset B(x_j,2r_j)$, we have $w=0$ in $X\setminus B(x_j,2r_j)$.
But now \eqref{eq:contradictory inequality} is a contradiction by \eqref{eq:BV lower limit characterization of BV capacity}, since $D_j$ is a $1$-capacitary potential for $I_{B(x_j,r_j)\cap A}$ in $B(x_j,2r_j)$. Thus the claim is proved.\\

By the claim, we have
\begin{equation}\label{eq:thick points and solutions and N cover whole space}
b_1 A\cup \bigcup_{j\in\N} \left(B(x_j,r_j)\cap O_{D_j}\right)\cup N=X,
\end{equation}
where $\mathcal H(N)=0$.
Since each $B(x_j,r_j)\cap O_{D_j}$ is $1$-quasiopen by Proposition \ref{prop:set of finite perimeter is quasiopen}, there exist open sets $G_j\subset X$ with
$\capa_1(G_j)<2^{-j-1}\eps$ such that each
\[
U_j:=(B(x_j,r_j)\cap O_{D_j})\cup G_j
\]
is open.
For the exceptional set $N$ with $\mathcal H(N)=0$,
we have also $\capa_1(N)=0$ by \eqref{eq:null sets of Hausdorff measure and capacity}, and then since $\capa_1$ is an outer capacity, we find an open set $V\supset N$ with $\capa_1(V)<\eps/2$.
Let $U:=\bigcup_{j\in\N}U_j\cup V$. 
Then by \eqref{eq:thick points and solutions and N cover whole space},
$b_1 A\cup U=X$.
By \eqref{eq:solutions do not intersect measure theoretic interior},
\[
\capa_1(U\cap I_A)=\capa_1\left(\Big(\bigcup_{j\in\N}G_j\cup V\Big)\cap I_A\right)\le \sum_{j\in\N}\capa_1(G_j)+\capa_1(V)<\eps.
\]
Note that since $U$ is open,
\[
U\cap b_1 I_A\subset b_1(U\cap I_A).
\]
Moreover, $\overline{I_A}^1=b_1 I_A$ by Lemma \ref{lem:fine closure of measure theoretic interior}.
Combining these,
\[
U\cap \overline{I_A}^1= U\cap b_1 I_A \subset b_1 (U\cap I_A) \subset \overline{U\cap I_A}^1.
\]
Now by Proposition \ref{prop:capacity of fine closure},
\[
\capa_1(U\cap \overline{I_A}^1) \le \capa_1(\overline{U\cap I_A}^1)
 =\capa_1(U\cap I_A)< \eps.
\]
\end{proof}

\begin{remark}
The Claim whose proof took up the bulk of the proof of Proposition \ref{prop:Choquet type property} can be seen as a weak analog of the \emph{Cartan property} that holds in the case $p>1$, see \cite[Theorem 1.1]{BBL-CCK}.
\end{remark}

A more exact analog of the Choquet property that is known to hold for $p>1$, see 
\cite[Theorem 7.1]{BBL-CCK}, is given in the following open problem.

\begin{openproblem}
Let $A\subset X$ and $\eps>0$.  Can we find
an open set $U\subset X$ such that $U\cup b_1 A=X$ and $\capa_1(U\cap A)<\eps$?
\end{openproblem}

\begin{proposition}\label{prop:complement of fine closure is quasiopen}
For any $\mu$-measurable set $E\subset X$, $X\setminus \overline{I_E}^1$ is a $1$-quasiopen set.
\end{proposition}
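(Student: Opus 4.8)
The plan is to deduce this from the Choquet-type property in Proposition \ref{prop:Choquet type property}, applied with $A = X \setminus I_E$ (which is $\mu$-measurable since $E$ is). First I would recall from Lemma \ref{lem:fine closure of measure theoretic interior} that $\overline{X\setminus I_E}^1 = b_1(X\setminus I_E)$, so that the set whose quasiopenness we want is $X \setminus b_1(X\setminus I_E)$. By Proposition \ref{prop:characterization of fine interior} with $A = X\setminus I_E$, this is exactly $\fint(I_E \cup \ldots)$ — more precisely, since $\overline{X\setminus I_E}^1 = b_1(X\setminus I_E)$ we get $X \setminus \overline{X\setminus I_E}^1 = X\setminus b_1(X\setminus I_E) \supset I_E \setminus b_1(X\setminus I_E) = \fint I_E$; but actually the cleaner identity to use is just the definition $X\setminus\overline{I_E}^1 = \fint(X\setminus I_E)$, a $1$-finely open set. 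Hmm — wait, that's not quite what is being asked: the statement concerns $X\setminus\overline{I_E}^1$ itself, which \emph{is} $1$-finely open by definition, but "$1$-quasiopen" is a strictly stronger, more topological condition, so there is genuine content here.

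So the real task: given $\eps>0$, produce an open set $G$ with $\capa_1(G)<\eps$ such that $(X\setminus\overline{I_E}^1)\cup G$ is open. Here is where I would invoke Proposition \ref{prop:Choquet type property}. Apply it to the set $A := X\setminus I_E$ (noting $I_A = I_{X\setminus I_E} = O_{I_E} = O_E$, using $\mu(E\Delta I_E)=0$, and $b_1 A = b_1(X\setminus I_E) = \overline{X\setminus I_E}^1$). The proposition gives an open set $U$ with $U \cup b_1(X\setminus I_E) = X$, i.e. $U \supset X\setminus\overline{X\setminus I_E}^1$, and with $\capa_1(U \cap \overline{O_E}^1) < \eps$. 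Now note that $\overline{X\setminus I_E}^1 = b_1(X\setminus I_E)$ and $X\setminus I_E = O_E \cup \partial^*E$, so $\overline{X\setminus I_E}^1 = \overline{O_E \cup \partial^*E}^1$; combined with Lemma \ref{lem:inclusion for fine boundaries} ($\partial^*E \subset b_1 O_E \subset \overline{O_E}^1$), we get $\overline{X\setminus I_E}^1 = \overline{O_E}^1$. Wait — but the statement is about $\overline{I_E}^1$, not $\overline{O_E}^1 = \overline{X\setminus I_E}^1$. These are different sets in general. So I need to be careful which of the two I am actually proving quasiopenness for.

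Reading the statement literally: $X\setminus\overline{I_E}^1 = \fint(X\setminus I_E)$. Note $\fint(X\setminus I_E) \subset X\setminus I_E$, and since by Lemma \ref{lem:fine closure of measure theoretic interior} $\overline{I_E}^1 = b_1 I_E \supset I_E \cup \partial^*E$, we have $X\setminus\overline{I_E}^1 \subset O_E$. Thus $X\setminus\overline{I_E}^1 = O_E \setminus b_1 I_E = \fint O_E$ after another application of Proposition \ref{prop:characterization of fine interior} (with $A = O_E$, using $b_1(X\setminus O_E) = b_1(I_E\cup\partial^*E) = b_1 I_E$ by Lemma \ref{lem:inclusion for fine boundaries}). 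So $X\setminus\overline{I_E}^1 = \fint O_E$, and the claim becomes: $\fint O_E$ is $1$-quasiopen. Now apply Proposition \ref{prop:Choquet type property} with $A := E$ itself, so $b_1 A = b_1 E \supset I_E \cup \partial^*E$ (Lemma \ref{lem:inclusion for fine boundaries}), hence $\overline{I_A}^1 = \overline{I_E}^1 = b_1 I_E$, and $\capa_1(U \cap b_1 I_E) < \eps$ with $U$ open and $U \cup b_1 E = X$. Since $b_1 E \subset \overline{E}^1 = E \cup b_1 E$, and more usefully since $I_E \cup \partial^*E \subset b_1 E$ gives $X\setminus b_1 E \subset O_E$ — actually I'd want $X\setminus b_1 E \subset \fint O_E$: indeed $X\setminus b_1 E$ is $1$-finely open (as $b_1 E \subset \overline{E}^1$ need not be finely closed — hmm, $b_1 E$ \emph{is} always finely closed? no). Let me instead take $A = I_E$ in Proposition \ref{prop:Choquet type property}: then $b_1 A = b_1 I_E = \overline{I_E}^1$, so $U \cup \overline{I_E}^1 = X$, i.e. $U \supset X\setminus\overline{I_E}^1$, and $\capa_1(U\cap\overline{I_E}^1) < \eps$. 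Then $U \cap (X\setminus\overline{I_E}^1) = U\setminus\overline{I_E}^1$ is open (since $U$ open, $\overline{I_E}^1$ finely closed — no, I need $U\setminus\overline{I_E}^1$ open in the metric topology, which fails).

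The right move: set $G := U \cap \overline{I_E}^1$ — this has $\capa_1 < \eps$ but need not be open; so instead use that $\capa_1$ is an outer capacity to enlarge $U\cap\overline{I_E}^1$ to an open set $G$ with $\capa_1(G) < \eps$. Then $(X\setminus\overline{I_E}^1) \cup G \supset (X\setminus\overline{I_E}^1) \cup (U\cap\overline{I_E}^1) \supset (U\setminus\overline{I_E}^1)\cup(U\cap\overline{I_E}^1) \cup (X\setminus\overline{I_E}^1) = U \cup (X\setminus\overline{I_E}^1) = X$, which is open — but I need the \emph{exact} set $(X\setminus\overline{I_E}^1)\cup G$ to be open, not just to equal $X$. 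Since I've now shown it equals $X$, it is open. That completes the argument.

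\begin{proposition*}
For any $\mu$-measurable set $E\subset X$, $X\setminus \overline{I_E}^1$ is a $1$-quasiopen set.
\end{proposition*}

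\textbf{Plan of proof.} Fix $\eps>0$. Apply Proposition \ref{prop:Choquet type property} with $A:=I_E$; note $I_A=I_{I_E}=I_E$, so $\overline{I_A}^1=\overline{I_E}^1$, and $b_1 A=b_1 I_E=\overline{I_E}^1$ by Lemma \ref{lem:fine closure of measure theoretic interior}. We thus obtain an open set $U\subset X$ with
\[
U\cup\overline{I_E}^1=X,\qquad \capa_1\big(U\cap\overline{I_E}^1\big)<\eps/2.
\]
Since $\capa_1$ is an outer capacity, choose an open set $G\supset U\cap\overline{I_E}^1$ with $\capa_1(G)<\eps$. Then
\[
\big(X\setminus\overline{I_E}^1\big)\cup G\supset\big(X\setminus\overline{I_E}^1\big)\cup\big(U\cap\overline{I_E}^1\big)=\big(X\setminus\overline{I_E}^1\big)\cup U=X,
\]
where the first equality holds because every point of $\overline{I_E}^1$ lying in $U$ is in $U\cap\overline{I_E}^1$, and $U\cup\overline{I_E}^1=X$. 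Hence $(X\setminus\overline{I_E}^1)\cup G=X$ is open, and since $\capa_1(G)<\eps$ with $\eps>0$ arbitrary, $X\setminus\overline{I_E}^1$ is $1$-quasiopen.

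\textbf{Main obstacle.} The only delicate point is bookkeeping: making sure that the set supplied by the Choquet-type property is exactly the complement $X\setminus\overline{I_E}^1$ (via the identification $b_1 I_E=\overline{I_E}^1$ from Lemma \ref{lem:fine closure of measure theoretic interior}), and that enlarging $U\cap\overline{I_E}^1$ to an open $G$ does not destroy the covering property $U\cup\overline{I_E}^1=X$; the latter is automatic since we only enlarge. All the real work is already contained in Proposition \ref{prop:Choquet type property}.
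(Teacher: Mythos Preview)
Your approach is exactly the paper's: apply Proposition~\ref{prop:Choquet type property} with $A=I_E$, use $b_1 I_E=\overline{I_E}^1$ from Lemma~\ref{lem:fine closure of measure theoretic interior}, and enlarge $U\cap\overline{I_E}^1$ to an open $G$ via outer regularity of $\capa_1$. However, your final computation contains an error. You assert
\[
(X\setminus\overline{I_E}^1)\cup U = X,
\]
but since $X\setminus\overline{I_E}^1\subset U$ (this is what $U\cup\overline{I_E}^1=X$ says), the left side is simply $U$, not $X$. Indeed, $(X\setminus\overline{I_E}^1)\cup G=X$ would force $\overline{I_E}^1\subset G$ and hence $\capa_1(\overline{I_E}^1)<\eps$ for every $\eps>0$, which is absurd for, say, $E$ a ball in $\R^2$.

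The fix is immediate and is what the paper does: from $X\setminus\overline{I_E}^1\subset U$ and $U\cap\overline{I_E}^1\subset G$ one gets
\[
(X\setminus\overline{I_E}^1)\cup G
=\big((U\setminus\overline{I_E}^1)\cup(U\cap\overline{I_E}^1)\big)\cup G
= U\cup G,
\]
which is open as the union of two open sets. With this correction your argument is complete and identical to the paper's.
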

\begin{proof}
Fix $\eps>0$.
By applying Proposition \ref{prop:Choquet type property} with $A=I_E$, and noting that $I_A=I_E$, we find an open set $U\supset X\setminus b_1 I_E\supset X\setminus \overline{I_E}^1$ such that
\[
\capa_1(U\cap \overline{I_E}^1)<\eps.
\]
Since $\capa_1$ is an outer capacity,
we find an open set $G\supset U\cap \overline{I_E}^1$ with $\capa_1(G)<\eps$.
Thus the set
\[
(X\setminus \overline{I_E}^1)\cup G=U\cup G
\]
is open, and thus $X\setminus \overline{I_E}^1$ is a $1$-quasiopen set.
\end{proof}

\begin{openproblem}
Is every $1$-finely open set $1$-quasiopen?
\end{openproblem}

Note that Proposition \ref{prop:complement of fine closure is quasiopen} gives a partial
result in this direction, since $X\setminus \overline{I_E}^1$ is of course a $1$-finely open set.
Moreover, a positive answer to this open problem would follow from a positive answer
to the previous open problem, just as Proposition \ref{prop:complement of fine closure is quasiopen} follows from Proposition \ref{prop:Choquet type property}.

\begin{corollary}\label{cor:set minus fine boundary is quasiopen}
For any $\mu$-measurable set $E\subset X$, $I_E\setminus \partial^1 I_E$
and $O_E\setminus \partial^1 I_E$ are $1$-quasiopen sets.
\end{corollary}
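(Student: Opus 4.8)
The plan is to express the two sets in question in terms of objects we already know to be $1$-quasiopen, and then use the fact that finite intersections of $1$-quasiopen sets are $1$-quasiopen. First I would recall from Lemma \ref{lem:inclusion for fine boundaries 2} that $\partial^1 I_E = b_1 I_E\cap b_1(X\setminus I_E)$, and from Lemma \ref{lem:fine closure of measure theoretic interior} that $\overline{I_E}^1 = b_1 I_E$ and $\overline{X\setminus I_E}^1 = b_1(X\setminus I_E)$. Combining these, one gets $\partial^1 I_E = \overline{I_E}^1\cap\overline{X\setminus I_E}^1$, so that
\[
X\setminus \partial^1 I_E = (X\setminus \overline{I_E}^1)\cup (X\setminus \overline{X\setminus I_E}^1).
\]
Hence $I_E\setminus\partial^1 I_E = (I_E\cap(X\setminus\overline{I_E}^1))\cup(I_E\cap(X\setminus\overline{X\setminus I_E}^1))$. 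Since $I_E\subset \overline{I_E}^1$, the first set in this union is empty, so in fact
\[
I_E\setminus\partial^1 I_E = I_E\cap (X\setminus \overline{X\setminus I_E}^1).
\]

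Now both factors on the right-hand side are $1$-quasiopen: $I_E$ (intersected with $X$, which is open) is $1$-quasiopen by Proposition \ref{prop:set of finite perimeter is quasiopen}... except that Proposition \ref{prop:set of finite perimeter is quasiopen} requires $P(E,\Omega)<\infty$, which we are not assuming here. So instead I would argue as follows. Apply Proposition \ref{prop:complement of fine closure is quasiopen} to the set $X\setminus E$: since $I_{X\setminus E}=O_E$ and $X\setminus I_E = O_E\cup\partial^*E$, and since (by Lemma \ref{lem:inclusion for fine boundaries} applied with $A=X\setminus I_E$) we have $\overline{X\setminus I_E}^1 = \overline{O_E}^1$, Proposition \ref{prop:complement of fine closure is quasiopen} with $E$ replaced by $X\setminus E$ tells us that $X\setminus\overline{O_E}^1 = X\setminus\overline{X\setminus I_E}^1$ is $1$-quasiopen. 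Thus $I_E\setminus\partial^1 I_E = I_E\cap(X\setminus\overline{X\setminus I_E}^1)$, and I need to intersect this with $I_E$. Here I would either invoke that $I_E$ itself is $1$-quasiopen (directly from the definition of $I_E$, $I_E\cap\Omega$ with $\Omega=X$; but without a finite-perimeter hypothesis this need not hold, as Example \ref{ex:real line} shows), or — better — observe that we do not need it: write instead
\[
I_E\setminus\partial^1 I_E = \overline{I_E}^1 \cap (X\setminus\overline{X\setminus I_E}^1),
\]
which holds because $\overline{I_E}^1\setminus\partial^1 I_E = \fint I_E\subset I_E$ (by Lemma \ref{lem:inclusion for fine boundaries}, $\fint I_E\subset I_E$) and conversely $I_E\setminus\partial^1 I_E\subset\overline{I_E}^1\setminus\partial^1 I_E$ since $I_E\subset\overline{I_E}^1$; and $\overline{I_E}^1\cap(X\setminus\overline{X\setminus I_E}^1)\subset X\setminus\overline{X\setminus I_E}^1\subset I_E$ using $X\setminus I_E\subset\overline{X\setminus I_E}^1$.

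The point of the reformulation is that $X\setminus\overline{X\setminus I_E}^1$ is already contained in $I_E$ and is $1$-quasiopen, and intersecting with $\overline{I_E}^1$ changes nothing — indeed $X\setminus\overline{X\setminus I_E}^1 = X\setminus b_1(X\setminus I_E)$ which is disjoint from $\partial^1 I_E$ but may still contain points of $\fint I_E$; one checks $X\setminus b_1(X\setminus I_E)\subset I_E\setminus\partial^1 I_E$ directly from $\partial^1 I_E = b_1 I_E\cap b_1(X\setminus I_E)$ and $I_E\supset X\setminus b_1(X\setminus I_E)$ (the latter because $X\setminus I_E\supset O_E$ so $X\setminus b_1(X\setminus I_E)\subset X\setminus b_0'$... more simply, $x\notin b_1(X\setminus I_E)$ forces $x\in\fint I_E\subset I_E$ by Proposition \ref{prop:characterization of fine interior}). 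Conversely $I_E\setminus\partial^1 I_E\subset X\setminus b_1(X\setminus I_E)$: if $x\in I_E$ and $x\notin\partial^1 I_E = b_1 I_E\cap b_1(X\setminus I_E)$ then, since $x\in I_E\subset b_1 I_E$ by Lemma \ref{lem:inclusion for fine boundaries}, we must have $x\notin b_1(X\setminus I_E)$. Therefore $I_E\setminus\partial^1 I_E = X\setminus\overline{X\setminus I_E}^1$ exactly, which is $1$-quasiopen by Proposition \ref{prop:complement of fine closure is quasiopen} (applied to $X\setminus E$, using $\overline{X\setminus I_E}^1 = \overline{O_E}^1 = \overline{I_{X\setminus E}}^1$). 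The statement for $O_E\setminus\partial^1 I_E$ follows by the symmetric argument: since $\partial^1 I_E = \partial^1 O_E$ by Lemma \ref{lem:coincidence of fine boundaries}, and $O_E = I_{X\setminus E}$, the same reasoning applied to $X\setminus E$ in place of $E$ gives $O_E\setminus\partial^1 I_E = O_E\setminus\partial^1 O_E = X\setminus\overline{X\setminus O_E}^1 = X\setminus\overline{I_E}^1$, which is $1$-quasiopen by Proposition \ref{prop:complement of fine closure is quasiopen} directly. The only mild obstacle is bookkeeping with the fine-closure identities of Lemmas \ref{lem:fine closure of measure theoretic interior}, \ref{lem:inclusion for fine boundaries 2}, and \ref{lem:coincidence of fine boundaries}; once those are in hand the corollary is immediate.
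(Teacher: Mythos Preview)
Your proposal is correct and, once the dust settles, follows the same route as the paper: you end up with the identities $O_E\setminus\partial^1 I_E = X\setminus\overline{I_E}^1$ and $I_E\setminus\partial^1 I_E = X\setminus\overline{I_{X\setminus E}}^1$, and then invoke Proposition~\ref{prop:complement of fine closure is quasiopen} together with the symmetry from Lemma~\ref{lem:coincidence of fine boundaries}. The paper's proof does exactly this but more directly, observing at once that $\overline{I_E}^1 = I_E\cup\partial^1 I_E$ (from the definition of $\partial^1$ and $\partial^*E\subset\partial^1 I_E$), whence $X\setminus\overline{I_E}^1 = O_E\setminus\partial^1 I_E$; your exposition reaches the same identities after several detours and self-corrections, and one justification (deducing $x\in I_E$ from $x\notin b_1(X\setminus I_E)$ ``by Proposition~\ref{prop:characterization of fine interior}'') should really cite Lemma~\ref{lem:fine closure of measure theoretic interior} instead, since Proposition~\ref{prop:characterization of fine interior} alone would be circular there.
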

\begin{proof}
By the definition of the $1$-fine boundary and the second claim of Lemma \ref{lem:inclusion for fine boundaries}, and noting that $\partial^*E=\partial^*I_E$,
\[
X\setminus \overline{I_E}^1=X\setminus (I_E\cup \partial^1 I_E)=O_E\setminus \partial^1 I_E.
\]
Thus by Proposition \ref{prop:complement of fine closure is quasiopen},
$O_E\setminus \partial^1 I_E$ is a $1$-quasiopen set.
Since $\partial^1 I_E=\partial^1 O_E$ by Lemma \ref{lem:coincidence of fine boundaries}, $I_E\setminus \partial^1 I_E$ is also a $1$-quasiopen set.
\end{proof}

\begin{proposition}\label{prop:curves and 1 fine boundary}
Let $E\subset X$ be a $\mu$-measurable set and let $\Gamma$ be the family of curves for which $\gamma(0)\in I_E$ and $\gamma(\ell_{\gamma})\in O_E$, but $\gamma$ does not intersect $\partial^1 I_E$. Then $\Mod_1(\Gamma)=0$.
\end{proposition}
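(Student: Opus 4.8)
The plan is to decompose the curve family $\Gamma$ into countably many subfamilies, each of which can be shown to have zero $1$-modulus by exhibiting an admissible function coming from a $1$-quasiopen/$1$-finely open structure. The starting observation is that a curve $\gamma\in\Gamma$ has one endpoint in $I_E$ and the other in $O_E$, yet avoids $\partial^1 I_E$ entirely; in particular $\gamma$ avoids $\partial^1 I_E$, so along $\gamma$ the set $I_E\setminus\partial^1 I_E$ and the set $O_E\setminus\partial^1 I_E$ are ``separated.'' By Corollary \ref{cor:set minus fine boundary is quasiopen} both $I_E\setminus\partial^1 I_E$ and $O_E\setminus\partial^1 I_E$ are $1$-quasiopen. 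Using Proposition \ref{prop:quasiopen is finely open} (together with \eqref{eq:null sets of Hausdorff measure and capacity}), after removing a set of zero $1$-capacity we may further assume these are $1$-finely open; and curves through a fixed set of zero $1$-capacity form a family of zero $1$-modulus (this is standard: a set of zero capacity is avoided by $1$-a.e.\ curve), so it suffices to handle curves that meet neither this exceptional set.

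Next I would reduce to a ``local'' statement at the endpoint in $I_E$. Fix a countable basis-type covering of $X$ by balls $B(x_k,\rho_k)$ with $\rho_k$ arbitrarily small, as in the proof of Proposition \ref{prop:Choquet type property}. If $\gamma\in\Gamma$ with $\gamma(0)=z\in I_E\setminus\partial^1 I_E=:V_1$ (a $1$-finely open set after the reduction above), then since $V_1$ is $1$-finely open, $X\setminus V_1$ is $1$-thin at $z$, i.e.
\[
\lim_{r\to 0} r\frac{\rcapa_1(B(z,r)\setminus V_1,B(z,2r))}{\mu(B(z,r))}=0.
\]
Choose $k$ with $z\in B(x_k,\rho_k)$ and $\rho_k$ small enough that $r\,\rcapa_1(B(z,r)\setminus V_1,B(z,2r))/\mu(B(z,r))$ is tiny for $r\le 2\rho_k$. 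The key point is that the other endpoint lies in $O_E$, hence in $X\setminus V_1$, so $\gamma$ must exit $B(z,\rho_k)$ and in doing so it passes through $B(x_k,2\rho_k)$; but the part of the space in $B(x_k,2\rho_k)$ that $\gamma$ is ``allowed'' to traverse while connecting $V_1$ to its complement forces $\gamma$ to cross a set whose variational $1$-capacity in $B(x_k,4\rho_k)$ is controlled by the thinness quantity above — via Lemma \ref{lem:perimeter of G} and the relation between $\rcapa_1$ and $1$-modulus of connecting curve families. Concretely, for each $k$ I would build a function $u_k\in N^{1,1}(X)$ supported in $B(x_k,4\rho_k)$ with $u_k\ge 1$ on the relevant obstruction set and small $L^1$-norm of upper gradient, so that $\sum_k u_k$ (or a judicious subsequence summing a geometric series $\sum 2^{-k}$ worth of capacity) is an admissible test function of finite $L^1$-norm for the subfamily of $\Gamma$ using that ball; letting the capacity bound $\to 0$ and using $\sigma$-subadditivity of $1$-modulus over $k$ then gives $\Mod_1(\Gamma)=0$.

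The main obstacle I anticipate is making precise the geometric claim that a curve joining a $1$-finely open set $V_1$ to its complement, while avoiding $\partial^1 I_E$, must intersect a set of small variational $1$-capacity localized near the starting point — the fine topology is not a metric topology, so ``exiting a finely open set'' does not immediately hand us a topological separator. The resolution should be: on the curve $\gamma$ (a genuine topological object), the sets $V_1\cap\gamma$ and $(X\setminus\overline{I_E}^1)\cap\gamma=(O_E\setminus\partial^1 I_E)\cap\gamma$ are the traces of $1$-quasiopen sets, and by Proposition \ref{prop:complement of fine closure is quasiopen} and Corollary \ref{cor:set minus fine boundary is quasiopen} their union covers $\gamma$ up to a $1$-null (hence, for $1$-a.e.\ curve, $\gamma$-null) set; since $\gamma$ is connected and these two pieces are relatively open in $\gamma$ after removing the null set, $\gamma$ cannot be split by them unless the removed null set is nonempty on $\gamma$ — which happens only for a zero-$1$-modulus family of curves. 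This is essentially the same mechanism as in \cite[Theorem 5.3]{A1} and the quasiopen-set arguments of Section \ref{sec:sufficiency}; I would carry it out by first proving that for $1$-a.e.\ curve $\gamma$, $\gamma^{-1}(I_E\setminus\partial^1 I_E)$ and $\gamma^{-1}(O_E\setminus\partial^1 I_E)$ are relatively open in the parameter interval and cover it, and then invoking connectedness of $[0,\ell_\gamma]$ to reach the contradiction with $\gamma(0)\in I_E$, $\gamma(\ell_\gamma)\in O_E$.
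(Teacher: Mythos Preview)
Your final paragraph lands on the right strategy --- show that for $1$-a.e.\ curve $\gamma$ the preimages $\gamma^{-1}(I_E\setminus\partial^1 I_E)$ and $\gamma^{-1}(O_E\setminus\partial^1 I_E)$ are relatively open in $[0,\ell_\gamma]$, note that they cover $[0,\ell_\gamma]$ (since $\partial^*E\subset\partial^1 I_E$, any curve avoiding $\partial^1 I_E$ lies in $I_E\cup O_E$), and then use connectedness --- and this is exactly what the paper does. The detour in your second paragraph through local thinness, ball coverings, and hand-built test functions $u_k$ is unnecessary and does not obviously close; you can discard it.

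The genuine gap is in how you justify the relative openness of the preimages. You propose to write each $1$-quasiopen set as a $1$-finely open set $V$ union a $\capa_1$-null set $N$ (Proposition~\ref{prop:quasiopen is finely open}), discard the family of curves meeting $N$, and then assert that ``these two pieces are relatively open in $\gamma$ after removing the null set.'' But after removing $N$ you are left with a $1$-\emph{finely} open set $V$, and there is no reason $\gamma^{-1}(V)$ should be relatively open: $\gamma$ is continuous for the metric topology, not the $1$-fine topology, and the $1$-fine topology is strictly finer (cf.\ Example~\ref{ex:real line}, where it already coincides with the metric topology, but in higher dimensions it does not). The missing ingredient is a direct result: $1$-quasiopen sets are \emph{$1$-path open}, i.e.\ for $1$-a.e.\ curve $\gamma$ the preimage $\gamma^{-1}(A)$ is relatively open in $[0,\ell_\gamma]$ whenever $A$ is $1$-quasiopen. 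This is \cite[Remark~3.5]{S2}, and the paper invokes it directly. Once you have that, Corollary~\ref{cor:set minus fine boundary is quasiopen} plus connectedness of $[0,\ell_\gamma]$ finishes the proof in a few lines.
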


\begin{proof}
By Corollary \ref{cor:set minus fine boundary is quasiopen}, $I_E\setminus \partial^1 I_E$
and $O_E\setminus \partial^1 I_E$ are $1$-quasiopen sets.
Thus by \cite[Remark 3.5]{S2} they are also \emph{$1$-path open} sets,
meaning that for $1$-almost every curve $\gamma$, the sets
$\gamma^{-1}(I_E\setminus \partial^1 I_E)$ and $\gamma^{-1}(O_E\setminus \partial^1 I_E)$
are relatively open subsets of $[0,\ell_{\gamma}]$.
Pick such $\gamma$, and suppose that $\gamma(0)\in I_E$ and $\gamma(\ell_{\gamma})\in O_E$. Since $[0,\ell_{\gamma}]$ is connected,
there exists $t\in (0,\ell_{\gamma})$ for which
\begin{align*}
\gamma(t)
&\in (X\setminus (I_E\setminus \partial^1 I_E)) \cap (X\setminus (O_E\setminus \partial^1 I_E))\\
&= (O_E\cup \partial^*E \cup \partial^1 I_E)\cap (I_E\cup \partial^*E \cup \partial^1 I_E)\\
&= (O_E \cup \partial^1 I_E)\cap (I_E \cup \partial^1 I_E)\\
&= \partial^1 I_E.
\end{align*}
\end{proof}

Using Proposition \ref{prop:curves and 1 fine boundary}, we can now prove Theorem
\ref{thm:sufficiency} by an argument very similar to one used previously in \cite[Theorem 6.5]{LaSh}.

\begin{proof}[Proof of Theorem \ref{thm:sufficiency}]

Fix $\eps>0$. Since $\mathcal{H}(\partial^1 I_E\cap \Omega)<\infty$, we can find a covering of $\partial^1 I_E\cap\Omega$ by balls $B_j=B(x_j,r_j)$ with radii $r_j\le \eps$ 
such that 
 \[
   \sum_{j\in\N}\frac{\mu(B_j)}{r_j}\le \mathcal{H}(\partial^1 I_E\cap\Omega)+\eps.
 \]
Denote $2 B_j:=B(x_j,2 r_j)$.
For each ball $B_j$ in the cover, we fix a $1/r_j$-Lipschitz function
$v_j$ such that $0\le v_j\le 1$ on $X$, $v_j=1$ in $B_j$, and  $v_j=0$ in $X\setminus 2B_j$. Now let
\[
u(x):=
\begin{cases} 
1&\text{ if }x\in I_E,\\
\min\left\{1, \sum_{j\in\N} v_j(x)\right\} &\textrm{ otherwise.}
\end{cases}
 \]
Furthermore, let $v(x):=\min\left\{1,\sum_{j\in\N}v_j(x)\right\}$, and
 \[
  g:=\sum_{j\in\N}\frac{1}{r_j}\ch_{2B_j}.
 \]
Clearly $g$ is an upper gradient of $v$. We will show that $g$ is also
a $1$-weak upper gradient of $u$ in $\Omega$.
Take a curve $\gamma\notin \Gamma$ in $\Omega$ with end points $x,y\in \Om$, where $\Gamma$ was defined in 
Proposition \ref{prop:curves and 1 fine boundary}. If $x,y\in \Omega\setminus I_E$, then 
\[
|u(x)-u(y)|=|v(x)-v(y)|\le \int_\gamma g\, ds.
\]
If $x,y\in I_E\cap \Omega$,
then $u(x)=u(y)$, and hence the upper gradient inequality
\[
|u(x)-u(y)|\le \int_\gamma g\, ds
\]
is satisfied.

Finally, if $x\in I_E\cap \Omega$ and $y\in \Omega\setminus I_E$, then since
$\gamma\notin\Gamma$, there is some $t\in[0,\ell_{\gamma}]$ such that 
$\gamma(t)\in\partial^1 I_E$, and thus $\gamma(t)\in B_k$ for some $k\in\N$. Note that $u(\gamma(0))=u(x)=1$,
$u(\gamma(t))=v(\gamma(t))=1$, and $u(y)=v(y)$.
It follows that
\begin{align*}
|u(x)-u(y)|&\le |u(\gamma(0))-u(\gamma(t))|+|u(\gamma(t))-u(\gamma(\ell_{\gamma}))|\\
&=|v(\gamma(t))-v(\gamma(\ell_{\gamma}))|\le \int_\gamma g\, ds.
\end{align*}
Thus in all cases the pair $u,g$ satisfies the upper gradient inequality for
$\gamma\notin\Gamma$ in $\Omega$, and so $g$ is a $1$-weak upper gradient of $u$
in $\Om$. Furthermore, 
\[
\int_{\Omega} g\, d\mu\le \sum_{j\in\N}\frac{\mu(2B_j)}{r_j}\le C_d\sum_{j\in\N}\frac{\mu(B_j)}{r_j}
\le C_d( \mathcal{H}(\partial^1 I_E\cap \Omega)+\eps)<\infty. 
\]
Now for each $i\in\N$, use the above construction to obtain functions $u_i\in N^{1,1}_{\loc}(\Omega)$, $g_i\in L^1(\Omega)$ corresponding to $\eps=1/i$.
Note that in order to show that $P(E,\Omega)<\infty$, it is enough to show that
$u_i\to \ch_E$ in $L^1(\Omega)$ and that $\Vert g_i\Vert_{L^1(\Omega)}$ is a bounded sequence, since for every $u_i\in N^{1,1}_{\loc}(\Omega)$ we can find a function $w_i\in\liploc(\Omega)$ with $\Vert w_i-u_i\Vert_{N^{1,1}(\Omega)}<1/i$, see \cite[Theorem 5.47]{BB}.
The sequence $\Vert g_i\Vert_{L^1(\Omega)}$ is clearly bounded, and moreover (note that below, the balls $B_j$ also depend on $i$)
\begin{align*}
\int_{\Omega}|u_i-\ch_E|\, d\mu
&\le \int_{\Omega} \ch_{\bigcup_{i\in\N}2B_j}\, d\mu\le \sum_{j\in\N}\mu(2B_j)\le \frac 1i\sum_{j\in\N} \frac{\mu(2B_j)}{r_j}\\
&\le (\mathcal{H}(\partial^1 I_E\cap \Omega)+1)/i\to 0
\end{align*}
as $i\to \infty$.
\end{proof}

\begin{remark}
We still do not know whether $\mathcal H(\partial^*E\cap \Omega)<\infty$ implies
$P(E,\Omega)<\infty$.
It is also not clear whether Proposition \ref{prop:curves and 1 fine boundary} would hold
with $\partial^1 I_E$ replaced by $\partial^*E$. If the answer to the latter question is yes, however, the proof would need to be something different, since Corollary \ref{cor:set minus fine boundary is quasiopen} is not true with $\partial^1 I_E$ replaced by $\partial^*E$, as demonstrated by the latter part of Example \ref{ex:real line}.
\end{remark}

\noindent Address:\\

\noindent Department of Mathematical Sciences\\
4199 French Hall West\\
University of Cincinnati\\
2815 Commons Way\\
Cincinnati, OH 45221-0025\\
E-mail: {\tt lahtipk@ucmail.uc.edu}

\end{document}